\DeclareFontFamily{U}{skulls}{}
\DeclareFontShape{U}{skulls}{m}{n}{ <-> skull }{}
\newcommand{\bbC}{\mathbb{C}}
\newcommand{\bbF}{\mathbb{F}}
\newcommand{\bbN}{\mathbb{N}}
\newcommand{\bbP}{\mathbb{P}}
\newcommand{\bbQ}{\mathbb{Q}}
\newcommand{\bbS}{\mathbb{S}}
\newcommand{\bbV}{\mathbb{V}}
\newcommand{\bbZ}{\mathbb{Z}}
\newcommand{\Gm}{\mathbb{G}_m}
\newcommand{\cA}{\mathcal{A}}
\newcommand{\cC}{\mathcal{C}}
\newcommand{\cG}{\mathcal{G}}
\newcommand{\cK}{\mathcal{K}}
\newcommand{\cL}{\mathcal{L}}
\newcommand{\cN}{\mathcal{N}}
\newcommand{\cO}{\mathcal{O}}
\newcommand{\cQ}{\mathcal{Q}}
\newcommand{\cT}{\mathcal{T}}
\newcommand{\cX}{\mathcal{X}}
\newcommand{\cY}{\mathcal{Y}}
\newcommand{\cZ}{\mathcal{Z}}
\newcommand{\rB}{\textup{B}}
\newcommand{\rH}{\textup{H}}
\newcommand{\rT}{\textup{T}}
\newcommand{\frS}{\mathfrak{S}}
\newcommand{\sym}{{\bbS}}
\renewcommand{\top}{\textup{top}}
\newcommand{\into}{\hookrightarrow}
\newcommand{\too}{\longrightarrow}
\renewcommand{\phi}{\varphi}
\renewcommand{\epsilon}{\varepsilon}
\renewcommand{\ker}{\Ker}
\newcommand{\iso}{\simeq}
\newcommand{\Dbc}{{\mathrm{D}^b_c}}
\newcommand{\Perv}{\textup{Perv}}
\newcommand{\PLambda}{\scalerel*{{\rotatebox[origin=c]{180}{$\bbV$}}}{\bbV}}
\newcommand{\intoo}{\lhook\joinrel\longrightarrow}
\DeclareMathOperator{\Vect}{Vect}
\DeclareMathOperator{\Alb}{Alb}
\DeclareMathOperator{\pr}{pr}
\DeclareMathOperator{\Spec}{Spec}
\DeclareMathOperator{\im}{Im}
\DeclareMathOperator{\Ker}{Ker}
\DeclareMathOperator{\Gr}{Gr}
\DeclareMathOperator{\Pic}{Pic}
\DeclareMathOperator{\Stab}{Stab}
\DeclareMathOperator{\Sym}{Sym}
\DeclareMathOperator{\Aut}{Aut}
\DeclareMathOperator{\GL}{GL}
\DeclareMathOperator{\SL}{SL}
\DeclareMathOperator{\SO}{SO}
\DeclareMathOperator{\Sp}{Sp}
\DeclareMathOperator{\Spin}{Spin}
\DeclareMathOperator{\Supp}{Supp}
\DeclareMathOperator{\Lie}{Lie}
\DeclareMathOperator{\id}{id}
\DeclareMathOperator{\Hilb}{Hilb}
\DeclareMathOperator{\alb}{alb}
\DeclareMathOperator{\Characteristic}{Char}
\DeclareMathOperator{\codim}{codim}
\DeclareMathOperator{\Alt}{Alt}
\DeclareMathOperator{\cc}{cc}
\DeclareMathOperator{\CC}{CC}
\DeclareMathOperator{\Char}{Char}
\renewcommand{\le}{\leqslant}
\renewcommand{\ge}{\geqslant}
\theoremstyle{plain}
\newtheorem{theoremintro}{Theorem}
\newtheorem*{maintheorem-monodromy}{Main theorem (monodromy version)}
\newtheorem*{maintheorem-tannaka}{Main theorem (Tannaka version)}
\newtheorem*{corollaryintro}{Corollary}
\newtheorem{theorem}{Theorem}[section]
\newtheorem{lemma}[theorem]{Lemma}
\newtheorem{proposition}[theorem]{Proposition}
\newtheorem{corollary}[theorem]{Corollary}
\newtheorem*{bigmonodromyintro}{Big Monodromy Criterion}
\newtheorem*{bigtannakaintro}{Big Tannaka group Criterion}
\newtheorem{condition}[equation]{Condition}
\newtheorem*{ShafarevichConjecture}{Shafarevich conjecture for canonically polarized varieties}
\theoremstyle{definition}
\newtheorem*{definitionintro}{Definition}
\newtheorem{definition}[theorem]{Definition}
\newtheorem{example}[theorem]{Example}
\newtheorem*{exampleintro}{Example}
\numberwithin{equation}{section}
\begin{document}

\title[The Shafarevich conjecture for varieties with spanned cotangent]{The Shafarevich conjecture for varieties with globally generated cotangent}

\begin{abstract}
We prove the Shafarevich conjecture for varieties with globally generated cotangent bundle, subject to mild numerical conditions.
\end{abstract}

\author[T. Kr\"amer]{Thomas Kr\"amer}
\address{Thomas Kr\"amer \\
	Institut f\"{u}r Mathematik\\
	Humboldt Universit\"at zu Berlin\\
	Rudower Chaussee 25,  12489 Berlin\\
	Germany.}
\email{thomas.kraemer@math.hu-berlin.de}

\author[M. Maculan]{Marco Maculan}

\address{ Marco Maculan \\
	Institut de Math\'ematiques de Jussieu \\
Sorbonne Universit\'e \\
4, place Jussieu \\
75005 Paris \\
France}
\email{marco.maculan@imj-prg.fr}




\maketitle

\thispagestyle{empty}

\section{Introduction}

Let $K$ be a number field and $\Sigma$ a finite set of primes in $K$. In this paper we prove new cases of the following conjecture:

\begin{ShafarevichConjecture} Fix $P \in \bbQ[t]$, then up to $K$-isomorphism there are only finitely many smooth projective canonically polarized varieties over $K$ with Hilbert polynomial $P$ and good reduction outside $\Sigma$.
\end{ShafarevichConjecture}

Here by the Hilbert polynomial of a variety we mean the one of its canonical bundle. We say that a smooth projective canonically polarized variety $Y$ over $K$ has \emph{good reduction outside $\Sigma$} if 
it admits a smooth projective model $\cY$ over the ring of $\Sigma$-integers $R=\cO_{K,\Sigma}$ such that the canonical bundle $\det \Omega^{1}_{\cY / R}$ is relatively ample. If such a 
model exists, then it is unique up to isomorphism. In what follows we will simply talk about \emph{good reduction}, assuming that $\Sigma$ has been fixed.\medskip

The above conjecture is a special case of the Lang-Vojta conjecture, predicting the nondensity of $\Sigma$-integral points on varieties of log general type over $K$. Indeed, the moduli spaces (or rather stacks) of canonically polarized varieties are known to be of log general type, as are all their subvarieties \cite{CampanaPaun}. The complex analogue of the above conjecture says that the moduli stacks of canonically polarized varieties do not contain entire curves, which is a celebrated result of Viehweg and Zuo \cite{ViehwegZuo}; however, these hyperbolicity properties will play no role in our arguments.\medskip

In the case of curves the conjecture is due to Shafarevich and was proven by Faltings on his path to the Mordell conjecture \cite{FaltingsMordell}. Since then, many cases of the conjecture and analogous finiteness results for unpolarized varieties have been proven \cite{SchollDelPezzo, AndreK3, She, TakamatsuK3, JavLoughFlag, JavanpeykarLoughran, JavanpeykarLondon, JavLoughPisa, JavLoughLondon, TakamatsuEnriques}.~All of  these rely on some classification of the varieties under consideration; this is not the case in~\cite{KM} where we proved the Shafarevich conjecture for subvarieties of abelian varieties with ample normal bundle, using the method of~\cite{LV, LS20} and the big monodromy criterion from~\cite{JKLM}. 
In this paper we extend the scope of the results in \cite{KM} to a much larger class of canonically polarized varieties:

\begin{definitionintro} A projective variety $Y$ over $k$ has \emph{amply generated cotangent bundle} if it is smooth, geometrically connected and the morphism $\rH^0(Y, \Omega^1_Y) \otimes_k \cO_Y \to \Omega^1_Y$ is surjective with nonzero and antiample (that is, with ample dual) kernel. 
\end{definitionintro}

If $Y$ has a $k$-rational point $y\in Y(k)$, then $Y$ has amply generated cotangent bundle if and only if the Albanese morphism $\alb_y\colon Y \to \Alb(Y)$ is unramified with ample normal bundle. In~\cite{KM} we discussed smooth projective varieties that embed in their Albanese variety with ample normal bundle, but the current class is much larger and goes beyond subvarieties of abelian varieties:

\begin{exampleintro} Let $k$ be a field of characteristic zero, and let $S$ be an irreducible smooth projective variety over $k$ such that the evaluation morphism $\rH^0(S, \Omega^1_S) \otimes_k \cO_S \to \Omega^1_S$ has locally free cokernel $\cC$ of rank $r$ (for instance take $h^0(S, \Omega^1_S)=0$). Let $A$ be an abelian variety over $k$, and let
\[ Y \; \subset \; A \times S \]
be a smooth complete intersection of ample divisors of dimension $d \ge 2$. Then the morphism $\Alb(Y) \to \Alb(S) \times A$ is an isomorphism by the Lefschetz theorem. If moreover $d + r < \dim A \times S$ and $Y$ is generic, the cotangent bundle of $Y$ is globally generated. The kernel $\cK$ of the evaluation morphism $\rH^0(Y, \Omega^1_Y) \otimes_k \cO_Y \to \Omega^1_Y$ is nonzero and sits in the short exact sequence
\[ 0 \too q ^\ast \cC^\vee \too \cN \too \cK^\vee \too 0 \]
where $\cN$ is the normal bundle of $Y$ in $A \times S$ and $q \colon Y \to S$ the morphism induced by the second projection. The vector bundle $\cN$ is ample, thus so is $\cK^\vee$ and with the above terminology $Y$ has amply generated cotangent bundle.\medskip

It may happen that $Y$ embeds into its Albanese over an algebraic closure of $k$, but the following construction gives new examples where this is not the case. Suppose that $S$ admits a connected finite \'etale cover $f \colon S' \to S$ of degree $> 1$ inducing an isomorphism
\[ \Alb(S') \; \stackrel{\sim}{\too} \; \Alb(S).\]
Over $k=\bbC$ such a cover exists whenever there is nontrivial torsion in $\rH_1(S, \bbZ)$, e.g.~for an Enriques surface $S$. In general, the preimage $Y' = S'\times_S Y$ has amply generated cotangent bundle and its Albanese variety is isomorphic to $\Alb(Y)$. So any Albanese morphism $Y' \to \Alb(Y')$ factors through~$Y$ and hence cannot be an embedding, and the same holds after any finite extension of the base field. This construction can be iterated by using $Y$ and $Y'$ in place of $S$ and~$S'$.
\end{exampleintro}

\subsection{Intrinsic results}

Varieties with amply generated cotangent bundle are in particular canonically polarized, so when they are defined over the number field $K$, our notion of good reduction from above applies. In the case of surfaces our main result takes the following form:

\begin{theoremintro}  \label{IntroThmIntrinsicSurfaces}
Fix an integer $c \ge 1$. Then up to $K$-isomorphism there are only finitely many projective surfaces $Y$ over $K$ with amply generated cotangent bundle, good reduction, $c_2(Y) = c$ and $h^0(Y, \Omega^1_Y) \ge 6$.
\end{theoremintro}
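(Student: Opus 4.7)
The plan is to reduce the intrinsic statement to the authors' earlier extrinsic result \cite{KM} on subvarieties of abelian varieties with ample normal bundle, by passing through the Albanese morphism, after first bounding the Hilbert polynomial.

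First I would bound the numerical invariants of $Y$ in terms of $c$. Since $Y$ has amply generated cotangent bundle it is canonically polarized, so Noether's formula $12\chi(\cO_Y)=K_Y^2+c_2(Y)$ together with the Bogomolov--Miyaoka--Yau inequality $K_Y^2\le 3c_2(Y)=3c$ bound both $K_Y^2$ and $\chi(\cO_Y)$; hence the Hilbert polynomial $P$ of $K_Y$ takes only finitely many values. For each fixed $P$, the moduli stack of canonically polarized surfaces with Hilbert polynomial $P$ is of finite type, so the irregularity $q := h^0(Y,\Omega^1_Y) = h^1(Y,\cO_Y)$ assumes only finitely many values; I may thus fix $q\ge 6$ and reduce to finiteness within each such stratum.

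Next I would put myself in the embedded setting of \cite{KM}. After a finite extension of $K$, harmless for the finiteness conclusion, $Y$ has a rational point and an Albanese morphism $\alb\colon Y\to A:=\Alb(Y)$ with $\dim A=q$. The characterization recalled after the definition says that $\alb$ is unramified with ample normal bundle; since $Y$ is proper, $\alb$ is finite onto its image $\ol{Y}\subset A$, the factorization $Y\to\ol{Y}\into A$ makes $Y$ a finite \'etale cover of a smooth projective surface $\ol{Y}\subset A$ whose normal bundle in $A$ is ample (ampleness being preserved under finite \'etale pullback), and both $\ol{Y}$ and $A$ inherit good reduction from $Y$. Applying \cite{KM} to $\ol{Y}\subset A$, whose big monodromy input is supplied by the criterion of \cite{JKLM} precisely in the range $q\ge 6$, then yields finitely many possibilities for $\ol{Y}$ up to isomorphism.

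Finally I would bound the number of $Y$'s lying over each such $\ol{Y}$: the degree $[Y:\ol{Y}]$ is bounded in terms of the Hilbert polynomials of $K_Y$ and $K_{\ol{Y}}$, hence ultimately in terms of $c$; and the connected finite \'etale covers of $\ol{Y}$ of bounded degree with good reduction outside $\Sigma$ form a finite set by a Hermite--Minkowski-type argument applied to $\pi_1^{\et}(\ol{Y})$. Combined with the previous step, this concludes the proof. The chief obstacle will be the second step, namely verifying that $\ol{Y}\subset A$ satisfies the precise hypotheses of \cite{KM}---in particular smoothness of the image, ampleness of its normal bundle, matching of Hilbert polynomials, and that the numerical threshold $q\ge 6$ is exactly the dimension bound demanded by the big monodromy criterion of \cite{JKLM} for a surface of codimension $q-2$ in $A$. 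Once that bridge is built, the boundedness and cover-counting inputs are standard.
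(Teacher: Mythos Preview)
There is a genuine gap in your second step. The assertion that the Albanese image $\ol{Y}\subset A$ is smooth and that $Y\to\ol{Y}$ is finite \'etale is false in general: a finite unramified morphism from a smooth variety can have singular image (an immersed curve acquiring a node is already a counterexample). What is true is that the \emph{normalization} $\tilde{\ol{Y}}$ is smooth and $Y\to\tilde{\ol{Y}}$ is \'etale, but then $\tilde{\ol{Y}}\to A$ is only finite unramified and birational onto its image, not a closed embedding. The example in the introduction (iterating an \'etale cover pulled back from a base with torsion in $\rH_1$) is designed precisely to produce surfaces $Y$ with amply generated cotangent whose Albanese morphism fails to be an embedding over every extension of the base field; for such $Y$ your reduction to \cite{KM} collapses.

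This is not a verification to be filled in but the main content of the paper. The finiteness theorem of \cite{KM} applies to smooth subvarieties embedded in $A$; extending it to finite unramified morphisms $Y\to A$---equivalently, to subvarieties of $A$ with smooth unramified normalization and ample normal bundle---is exactly \cref{IntroThmExtrinsic}. Proving that theorem requires reworking the Big Tannaka Group Criterion of \cite{JKLM} for such singular subvarieties: the simplicity argument (\cref{sec:Simplicity}), the wedge-power and spin analyses (\cref{sec:WedgePowers,sec:Spin}), and the exclusion of $E_6,E_7$ (\cref{sec:Exceptional}) all have to be redone with the conormal bundle on the normalization replacing the conormal variety of a smooth subvariety. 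Your steps 1 and 4 are correct and match the paper, and your numerical observation that $q\ge 6$ is the threshold $d<(g-1)/2$ for $d=2$ is right; but step 3 as written does not apply, and building the bridge you describe as ``the chief obstacle'' is the paper itself.
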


In fact we obtain an analogous result for varieties of arbitrary dimension, subject to certain mild numerical conditions:

\begin{theoremintro} \label{IntroThmIntrinsicGeneral}
Fix $P \in \bbQ[t]$ of degree $d$. Then up to $K$-isomorphism  there are only finitely many projective varieties $Y$ over $K$ with amply generated cotangent bundle, good reduction, Hilbert polynomial $P$, $h^0(Y, \Omega^1_Y) \ge 2 d + 2$ and satisfying the numerical conditions \ref{Eq:FirstNumericalCondition} and~\ref{Eq:NumericalConditions} below with $\pi = \id$. 
\end{theoremintro}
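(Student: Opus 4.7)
The plan is to extend the strategy of the authors' earlier work~\cite{KM}, which treats smooth projective varieties that embed into their Albanese variety with ample normal bundle, to the wider class where the Albanese morphism is merely unramified with ample normal bundle. The overall architecture is the Lawrence--Venkatesh / Lawrence--Sawin method~\cite{LV, LS20} guided by the big monodromy / big Tannaka group input of~\cite{JKLM}.

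First, I would reduce to the case where every $Y$ in the class has a $K$-rational point. By a standard Hermite--Minkowski argument one may enlarge $K$ and $\Sigma$ while controlling their degree in terms of~$P$; after this reduction the Albanese morphism $f:=\alb_y\colon Y\to A:=\Alb(Y)$ exists over~$K$, is unramified with ample normal bundle, and satisfies $\dim A = h^0(Y,\Omega^1_Y)\ge 2d+2$. This places the problem inside the category of geometric objects on the abelian variety~$A$, which is the natural home for the convolution formalism.

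Second, I would build the Tannakian object attached to $Y$. On $A$ one considers the semisimplification of $\cP := {}^p\!\cH^\bullet(f_*\bbQ_\ell[d])$ in the convolution category of perverse sheaves on $A$ modulo negligible objects. This provides a Tannakian group $G_\cP$ together with its tautological representation and a compatible Galois representation; a variety $Y$ with good reduction yields such data unramified outside $\Sigma$. The key rigidity input is that, since $f$ is unramified with ample normal bundle, $Y$ (with its embedding data) can be reconstructed from $\cP$ up to finite ambiguity, so finiteness for $Y$ will follow from finiteness of the semisimplified Tannakian-plus-Galois datum.

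Third, I would check the big Tannaka group criterion of~\cite{JKLM}. The bound $h^0(Y,\Omega^1_Y)\ge 2d+2$ together with the numerical conditions~\ref{Eq:FirstNumericalCondition} and~\ref{Eq:NumericalConditions} (with $\pi=\id$) is set up so that this criterion applies: it forces $G_\cP$ to be a classical group acting in its standard representation. Once big Tannaka (equivalently, big monodromy) holds, the Lawrence--Sawin theorem in the form used in~\cite{KM} converts this into finiteness: the residual Galois representations attached to the family are semisimple with controlled ramification, Faltings-type finiteness controls them up to finitely many choices, and the $p$-adic Hodge-theoretic period argument shows that distinct $Y$ cannot share the same Tannakian-Galois datum.

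The main obstacle will be Step~3, specifically handling the fact that $f$ need not be a closed immersion. When $f$ is generically finite but not birational (as in the Enriques-type examples of the introduction) or has positive-dimensional fibres coming from a factor of $\Alb(Y)$, the perverse sheaf $\cP$ decomposes non-trivially and one must track all constituents to identify the Tannakian group; in the embedded case of~\cite{KM} this difficulty is absent. The strategy is to exploit the ample normal bundle of $f$ to control the local structure of $f_*\bbQ_\ell$, so that every simple constituent of $\cP$ still satisfies the hypotheses of the big Tannaka group criterion. Once this is in place, the finiteness conclusion is a transcription of the final steps of~\cite{KM} to the present Tannakian datum.
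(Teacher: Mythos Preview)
Your overall architecture (Lawrence--Sawin plus a big Tannaka input from~\cite{JKLM}) matches the paper, but there are two genuine gaps in how you bridge the non-embedded case.

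First, you never reduce to a fixed abelian variety. The paper deduces \cref{IntroThmIntrinsicGeneral} from the extrinsic \cref{IntroThmExtrinsic} by invoking Faltings' Shafarevich conjecture for abelian varieties: since $\Alb(Y)$ has good reduction and bounded dimension, only finitely many isomorphism classes of $A$ can occur, and one then works inside each fixed~$A$. Your Hermite--Minkowski step produces rational points but does not pin down~$A$, and the Lawrence--Sawin machinery you cite from~\cite{KM} is set up over a fixed abelian variety.

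Second, and more seriously, your proposed handling of the non-birational Albanese is not the paper's and is unlikely to succeed as stated. You propose to work with the full pushforward $f_*\bbQ_\ell[d]$ and show that \emph{every} simple constituent satisfies the big Tannaka criterion. But the paper's criterion (\cref{sec:BigTannakaIntro}) is formulated for the intersection complex $\delta_X$ on the image $X=f(Y)$, and explicitly notes that when $f$ is not birational the pushforward decomposes and the monodromy of the full object is \emph{not} big. The non-trivial constituents are IC sheaves on $X$ with twisted local systems, and nothing in~\cite{JKLM} or the present paper gives a big Tannaka statement for those. The paper instead sidesteps this entirely: it factors $f$ as $Y \to \tilde{X} \to X \hookrightarrow A$, observes that $Y\to\tilde{X}$ is finite \'etale of bounded degree (so only finitely many $Y$ sit over a given $X$, by \cref{lemma:FinitenessEtaleCovers}), and then proves finiteness for the images $X$ using only $\delta_X$. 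The new technical work is showing that the big Tannaka and big monodromy criteria hold for $\delta_X$ when $X$ merely has \emph{smooth unramified normalization} with ample normal bundle, rather than being smooth itself; this is what \cref{sec:Simplicity,sec:WedgePowers,sec:Spin,sec:Exceptional} accomplish.
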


Conditions \ref{Eq:FirstNumericalCondition} and~\ref{Eq:NumericalConditions} will be discussed in the next section. They in particular hold if $d=2$, so~\cref{IntroThmIntrinsicGeneral} implies~\cref{IntroThmIntrinsicSurfaces} since by the Bogomolov-Miyaoka-Yau inequality, the Hilbert polynomial of a surface $S$ with amply generated cotangent bundle is controlled by $c_2(S)$ \cite[2.7]{KM}. For odd~$d$ and $h^0(Y, \Omega^1_Y) \ge 4d + 2$ the conditions \ref{Eq:FirstNumericalCondition} and~\ref{Eq:NumericalConditions} are empty, so~\cref{IntroThmIntrinsicGeneral} in particular implies:

\begin{corollaryintro} Fix $P \in \bbQ[t]$ of odd degree $d$. Then up to $K$-isomorphism  there are only finitely many projective varieties with amply generated cotangent bundle, good reduction, Hilbert polynomial~$P$ and $h^0(Y, \Omega^1_Y) \ge 4 d + 2$.
\end{corollaryintro}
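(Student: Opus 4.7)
The plan is to deduce the corollary as an immediate consequence of Theorem~\ref{IntroThmIntrinsicGeneral} applied with $\pi = \id$. The hypothesis $h^0(Y,\Omega^1_Y) \ge 4d+2$ is stronger than the lower bound $h^0(Y,\Omega^1_Y) \ge 2d+2$ required in that theorem, so the only substantive task is to verify that the numerical conditions \ref{Eq:FirstNumericalCondition} and \ref{Eq:NumericalConditions} are vacuous whenever $d$ is odd and $h^0(Y,\Omega^1_Y) \ge 4d+2$, just as the authors assert in the paragraph preceding the statement.

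To carry this out I would unpack the statements of \ref{Eq:FirstNumericalCondition} and \ref{Eq:NumericalConditions} and check case by case that, for $d$ odd, each inequality either (i) is indexed by a parameter that does not occur (so the condition is empty by convention), or (ii) is implied by $h^0(Y,\Omega^1_Y) \ge 4d+2$ together with $\dim Y = d$. Heuristically one expects such numerical conditions to arise from the big-monodromy / Tannaka analysis on the middle cohomology of $Y$, where Poincar\'e duality provides a symmetric pairing when $d$ is even and an alternating pairing when $d$ is odd. The symplectic case admits a substantially cleaner big-monodromy criterion than the orthogonal case, and one expects the extra numerical input in Theorem~\ref{IntroThmIntrinsicGeneral} to be needed only to exclude exceptional small-rank subgroups of orthogonal type. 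Those obstructions simply do not appear when $d$ is odd, which is precisely why the two conditions trivialize in that regime.

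The main, and essentially only, obstacle is therefore bookkeeping: one must verify that $4d+2$ (rather than some slightly weaker or stronger affine function of $d$) is indeed the correct threshold on $h^0(Y,\Omega^1_Y)$ that trivializes both \ref{Eq:FirstNumericalCondition} and \ref{Eq:NumericalConditions} in the odd-degree case, and that no hidden dependence on further invariants of $P$ intervenes. Once these inequalities are checked, no additional geometric input is needed beyond Theorem~\ref{IntroThmIntrinsicGeneral}; the finiteness statement is then simply its specialization to odd-dimensional~$Y$.
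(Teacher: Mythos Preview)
Your proposal is correct and follows exactly the paper's route: the paper simply notes (in the sentence immediately preceding the corollary) that for odd $d$ condition~\ref{Eq:FirstNumericalCondition} is vacuous because its hypothesis ``$d\ge 4$ even'' is never met, and that for $h^0(Y,\Omega^1_Y)\ge 4d+2$ condition~\ref{Eq:NumericalConditions} is vacuous because $g=h^0(Y,\Omega^1_Y)\ge 4d+2$ forces $d<(g-1)/4$, so the ``if'' clause is never triggered; the corollary is then an immediate specialization of Theorem~\ref{IntroThmIntrinsicGeneral}. The only thing missing from your write-up is to actually carry out this two-line verification rather than merely announce that you would do it.
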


\subsection{Numerical conditions} \label{sec:NumericalConditions} To formulate the two numerical conditions that go into~\cref{IntroThmIntrinsicGeneral} and into~\cref{IntroThmExtrinsic} below, let $Y$ be a smooth projective variety over a field $k$ of characteristic zero. The first condition compares the topological Euler characteristic $\chi_\top(Z)$ of the smooth projective variety $Z=Y\times Y$ to the Euler characteristic of the vector bundle $\Omega^d_Z$:

\begin{condition} \label{Eq:FirstNumericalCondition}
If $d=\dim Y\ge 4$ is even, assume that 
\[ (-1)^d \,\chi(\Omega^d_{Y \times Y}) \;\le\; \tfrac{1}{2} \,\chi_{\top}(Y \times Y). \]
\end{condition}

For the second numerical condition, we place ourselves in a slightly more general framework that will be used in~\cref{IntroThmExtrinsic}, starting from an arbitrary surjective morphism $\pi \colon \Alb(Y) \to A$ of abelian varieties; in fact in~\cref{IntroThmExtrinsic} the variety $Y$ will be given together with a morphism $Y \to A$ and we then tacitly assume $\pi$ to be induced by this morphism. Consider the vector subspace
\[ V\; := \; (\Lie A)^\vee \; \intoo \; (\Lie \Alb(Y))^\vee \; = \; \rH^0(Y, \Omega^1_Y) \]
and suppose that the induced morphism $\phi \colon V \otimes_k \cO_Y \to \Omega^1_Y$ is surjective. If $k$ is algebraically closed, let $\alb_y \colon Y \to \Alb(Y)$ be the Albanese morphism associated with $y \in Y(k)$. The composite morphism
\[ f \;=\; \pi\circ \alb_y \colon \quad Y \;  \too \; A\]
is unramified because $\phi$ is assumed to be surjective. Denote the \emph{stabilizer} of the image of $f$ by 
\[ G \; := \; \Stab_A(f(Y))\;=\;\{ a \in A(k) \colon f(Y) + a = f(Y)\}.\] 
Then $Y$ is of general type if and only if the group $G$ is finite; see \cref{sec:BasicProperties}. In this case we set
\[ 
\chi_{\top, \pi}(Y) := \frac{\chi_{\top}(Y)}{n} \quad \textup{for $n = |G|\cdot \deg (f)$,}
\]
where $\chi_{\top}(Y)$ is the topological Euler characteristic of $Y$. 
Note that $\chi_{\top, \pi}(Y)$ is an integer since it is equal to the topological Euler characteristic of the normalization of the quotient $f(Y)/G$ (this normalization is smooth by \cref{lemma:StabNorm}). We also say that $Y$ is \emph{symmetric with respect to $\pi$} if $f(Y) = a - f(Y)$ for some $a \in A(k)$. 
This notion and the definition of $\chi_{\top, \pi}(Y)$ do not depend on $y$, so by extending scalars they make sense also when the base field $k$ is not algebraically closed. Our second  condition excludes a few values of $\chi_{\top, \pi}(Y)$ in terms of $g=\dim A$:

\begin{condition} \label{Eq:NumericalConditions}
If $d=\dim Y\ge (g-1)/4$ and~$Y$ is symmetric with respect to $\pi$, assume that
\[
|\chi_{\top, \pi}(Y)| \;\neq\;
2^{2m - 1} 
\quad \text{for all $m\in \{3,\dots, d\}$ with $m\equiv d$ modulo $2$}.
\]
\end{condition}

We take advantage of the general setup of this section to give two definitions to be used  in~\cref{sec:BigMonodromy,,sec:BigTannakaIntro}. When $k$ is algebraically closed, a subvariety $X \subset A$ of dimension $d \ge 2$ is said to be
\begin{itemize}
\item a \emph{product} if there are $X_1, X_2 \subset A$ of dimension $> 0$ with $X = X_1 + X_2$ such that the sum morphism induces on normalizations an isomorphism 
\[\tilde{X}_1 \times \tilde{X}_2 \;\stackrel{\sim}{\too}\; \tilde{X}, \]
\item a \emph{symmetric power of a curve} if there is a curve $C \subset A$ with $X = C + \cdots + C$ such that the sum morphism induces on normalizations an isomorphism 
\[ \Sym^d \tilde{C} \;\stackrel{\sim}{\too}\; \tilde{X}. \]
\end{itemize}
When $k$ is arbitrary, we say that a subvariety $X\subset A$ is a product resp.~a symmetric power of a curve if its base change to an algebraic closure of $k$ is.

\subsection{Extrinsic result} \label{sec:ExtrinsicResults} We will deduce \cref{IntroThmIntrinsicGeneral} from a general result for smooth projective varieties with an unramified morphism to an abelian variety.  Let $A$ be an abelian variety of dimension~$g$ over $K$ and $L$ an ample line bundle on it. Recall that $A$ has good reduction if it is the generic fiber of an abelian scheme $\cA$ over the ring of $\Sigma$-integers $\cO_{K, \Sigma}$, in which case $\cA$ is unique up to isomorphism. 

\begin{definitionintro}
A finite unramified morphism $f \colon Y \to A$ has \emph{good reduction} if it extends to an unramified morphism $\cY \to \cA$ where $\cY$ is a smooth projective scheme over $\cO_{K, \Sigma}$ with generic fiber $Y$. Note that then in particular $Y$ is a smooth variety.
\end{definitionintro}

For varieties $Y$ with amply generated cotangent bundle and $y\in Y(K)$, the Albanese morphism $\alb_y\colon Y \to A = \Alb(Y)$ has good reduction if and only if $Y$ has good reduction in the sense explained at the beginning of this paper. We show:

\begin{theoremintro} \label{IntroThmExtrinsic}
Fix $P \in \bbQ[t]$ of degree $d < (g-1)/2$. Up to isomorphism of schemes over $A$, there are only finitely  many finite unramified morphisms $f\colon Y\to A$ with good reduction and ample normal bundle such that $Y$ is geometrically integral, $f(Y)$ is not a product, $f^\ast L$ has Hilbert polynomial $P$, and  conditions~\ref{Eq:FirstNumericalCondition} and~\ref{Eq:NumericalConditions} hold. 
\end{theoremintro}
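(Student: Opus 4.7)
The plan is to run the Lawrence--Venkatesh / Lawrence--Sawin method along the lines of~\cite{KM}, but for finite unramified morphisms $f\colon Y\to A$ rather than closed immersions. I first use boundedness: since $f^\ast L$ has fixed Hilbert polynomial $P$, both $\deg_L f(Y)$ and $\deg(f)$ are bounded, so the pairs $(Y,f)$ form a bounded family. Standard arguments then produce, after enlarging $\Sigma$ to some $\Sigma'$, a quasi-projective moduli scheme $\cM$ over $\cO_{K,\Sigma'}$ carrying a universal family $\cY\to\cA\times_{\cO_{K,\Sigma'}}\cM$ of finite unramified maps, such that every $(Y,f)$ as in the theorem gives a $\Sigma'$-integral point of $\cM$; the task then becomes proving finiteness of such points.

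To each point $m\in\cM$ one attaches on $\cA_{\bar K}$ the perverse sheaf $\cF_m = f_{m,\ast}\bbQ_\ell[d]$, which is self-dual up to a twist because $f_m$ is finite. The Big Tannaka Group Criterion developed in~\cref{sec:BigTannakaIntro} (building on~\cite{JKLM}) should then assert that, under our hypotheses, the geometric Tannaka group of $\cF_m$ with respect to the convolution product on $A$ is a large classical group. Each hypothesis plays a specific role: ampleness of the normal bundle together with $d<(g-1)/2$ provides the cleanness and codimension estimates needed for an honest Tannakian description, requiring $f(Y)$ not to be a product rules out a reduction of structure to a product Tannaka group, and the numerical conditions~\ref{Eq:FirstNumericalCondition} and~\ref{Eq:NumericalConditions} exclude, via a representation-theoretic dictionary, the remaining small Tannaka groups that could otherwise appear in even dimensions or in the presence of a symmetry $f(Y) = a - f(Y)$, with symmetric powers of curves being controlled by the combination of the other hypotheses. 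This bigness then propagates to the geometric monodromy on each irreducible component of~$\cM$.

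With big monodromy in hand, I would invoke the Lawrence--Sawin finiteness machinery: for a prime $p$ of good reduction, $p$-adic Hodge theory attaches to $\Sigma'$-integral points of $\cM$ crystalline Galois representations with Hodge--Tate weights in a fixed range, which therefore accumulate into finitely many Galois orbits in the period domain, while big monodromy forces the period map to have discrete fibers on the crystalline locus. Combining the two yields finiteness of the $\Sigma'$-integral points of $\cM$, hence of isomorphism classes of $(Y,f)$. The main obstacle, in my view, is to make the Big Tannaka Group Criterion applicable uniformly across~$\cM$: the Tannaka group can only drop under specialization, so one must verify that the hypotheses of the criterion pass to the generic point of each irreducible component of $\cM$ and, more delicately, that none of the sporadic small groups excluded by~\ref{Eq:FirstNumericalCondition} and~\ref{Eq:NumericalConditions} emerge as degenerations along the moduli boundary. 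Matching the numerics of these conditions to the classification of representations of classical groups at each such potential degeneration is where most of the technical work should lie.
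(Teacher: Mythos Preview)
Your overall architecture is right in spirit, but there is a structural gap that breaks the argument as written. The Big Tannaka Group Criterion and the Big Monodromy Criterion both have, as a \emph{necessary} condition for bigness, that $f$ be birational onto its image. When $\deg f>1$, the perverse sheaf $\cF_m=f_{m,\ast}\bbQ_\ell[d]$ decomposes into several simple summands (the paper states this explicitly: the monodromy then ``decomposes in block matrices and cannot be big''). So for a generic point of your moduli space $\cM$ of unramified morphisms there is no reason to expect big monodromy, and the Lawrence--Sawin input fails precisely for the new morphisms you want to cover.

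The paper's route avoids this by not parametrizing morphisms at all. One first passes to the image $X=f(Y)\subset A$; its normalization $\tilde X$ is smooth and $Y\to\tilde X$ is \'etale, so $\tilde X$ again has good reduction and its Hilbert polynomial lies in a finite set determined by $P$. One then proves a finiteness theorem for such \emph{subvarieties} $X\subset A$ (\cref{Thm:FinitenessNormalization}), working on the Hilbert scheme $\Hilb_A$ with a constructible stratification over which the fibrewise normalization is well-behaved (\cref{Lemma:InterestingPartHilbScheme}); the big monodromy criterion is applied to the universal family of \emph{normalizations}, where the map to $A$ is automatically birational. Nondivisibility is handled by quotienting by the stabilizer (which is shown to lie in a finite list), and the ``constant up to translation'' alternative in the criterion is absorbed by Noetherian induction. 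Only at the very end does one return to morphisms, via finiteness of \'etale covers of $\tilde X$ of bounded degree (\cref{lemma:FinitenessEtaleCovers}).

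Two smaller points. First, your attribution of the roles of the numerical conditions is off: \cref{Eq:NumericalConditions} is what enters the Tannaka classification (it rules out the half-spin cases), whereas \cref{Eq:FirstNumericalCondition} is consumed by the $p$-adic nondensity result from~\cite{KM}, not by the representation-theoretic dictionary. Second, symmetric powers of curves are \emph{not} ``controlled by the combination of the other hypotheses'' at the Tannaka level; they genuinely fail the bigness criterion and must be treated separately in the finiteness argument on the Hilbert scheme side. Your closing worry about degenerations along the boundary is therefore not the crux; the crux is that the perverse sheaf you chose is the wrong one whenever $\deg f>1$.
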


\Cref{IntroThmIntrinsicGeneral} is deduced from \cref{IntroThmExtrinsic} via the Shafarevich conjecture for abelian varieties that was proven by Faltings \cite{FaltingsMordell}. The argument here is identical to the one in \cite[2.4-2.6]{KM} and we will not repeat it: roughly speaking, when looking at the moduli space of canonically polarized varieties with Hilbert polynomial $P$, it suffices to consider the locus where the relative Albanese morphism of the universal family is unramified, instead of being a closed embedding.
\Cref{IntroThmExtrinsic} follows from the big monodromy criterion in the next section and the nondensity result in~\cite[th. D]{KM}. The latter is based on the Lawrence-Venkatesh method \cite{LV} as elaborated by Lawrence-Sawin~\cite{LS20}; in \cref{sec:Applications} we will explain the main changes that are needed for the proof of~\cref{IntroThmExtrinsic} compared to~\cite[2.2-2.3]{KM}.

\subsection{Big monodromy} \label{sec:BigMonodromy} We now formulate the big monodromy criterion which is the main geometric input for the proof of \cref{IntroThmExtrinsic}. Let $S$ be a smooth connected variety over $k=\bbC$, and $A$ a complex abelian variety of dimension $g$. Suppose we are given a morphism $f \colon \cY \to A \times S$ such that the projection $\pi_S \colon \cY \to S$ is smooth with connected fibers of dimension $d$. Given an $n$-tuple $\underline{\chi} = (\chi_1, \dots, \chi_n)$ of characters
\[ \chi_i \colon \quad \pi_1(A(\bbC), 0) \;\too\; \bbC^\times\]
of the topological fundamental group of the abelian variety, let $L_{\underline{\chi}} = L_{\chi_1} \oplus \cdots \oplus L_{\chi_n}$ be the direct sum of the associated local systems of rank one on $A(\bbC)$, and consider the local system
\[ V_{\underline{\chi}} \;:=\; R^d \pi_{S \ast} \, \pi_A^* \, L_{\underline{\chi}} 
\]
where $\pi_A \colon \cY \to A$ is the projection. We want to ensure that for all $n$ and for sufficiently general tuples of characters, the Zariski closure of the image of the monodromy representation of $V_{\underline{\chi}}$ is as large as possible. In this case we say $\cY \to S$ has \emph{big monodromy for most tuples of torsion characters}; see~\cite[1.5]{KM} for the precise definition. As in loc.~cit.~it will be convenient to phrase our criterion in terms of the fiber over a geometric generic point $\bar{\eta}$ of $S$. We show:

\begin{bigmonodromyintro} Suppose $f_{\bar{\eta}} \colon \cY_{\bar{\eta}} \to A_{S, \bar{\eta}}$ is unramified with ample normal bundle, $d < (g - 1)/2$ and \cref{Eq:NumericalConditions} holds. Then, the following are equivalent:
\begin{enumerate}
\item $f_{\bar{\eta}}$ is birational onto its image, and this image is nondivisible, not constant up to translation, not a product and not a symmetric power of a curve;\smallskip
\item $f \colon \cY \to A \times S$ has big monodromy for most tuples of torsion characters.
\end{enumerate}
\end{bigmonodromyintro}

Here the subvariety $f(\cY_{\bar{\eta}})\subset A_{S, \bar{\eta}}$ is said to be \emph{constant up to translation} if it is a translate $Y + a$ of a complex subvariety $Y \subset A$  by a point $a \in A(\bar{\eta})$. If $f(\cY)_{\bar{\eta}}$ is nondivisible, then by~\cite[cor.~4.8]{JKLM} it is constant up to translation if and only if the family $f(\cY) \to S$ is isotrivial. Also note that the morphism $f_{\bar{\eta}}$ is finite, hence it is is birational if and only if $\cY_{\bar{\eta}}$ is the normalization of its image in $A_{S, \bar{\eta}}$.  \medskip

As in \cite[1.1]{JKLM}, if $f(\cY_{\bar{\eta}})$ is divisible, or constant up to translation, or a product, or a symmetric power of a curve, then $V_{\underline{\chi}}$ does not have big monodromy for most tuples of torsion characters. If $f_{\bar{\eta}}$ is not birational, then the pushforward to $A_{S,\bar{\eta}}$ of the constant sheaf on $\cY_{\bar{\eta}}$ has several direct summands, so the monodromy decomposes in block matrices and cannot be big. This proves (2) $\Rightarrow$ (1). \medskip

The nontrivial implication (1) $\Rightarrow$ (2) is the one relevant for \cref{IntroThmExtrinsic} and the main content of the big monodromy criterion. By \cite[th. 4.10]{JKLM}, it suffices to show that the subvariety $f(\cY_{\bar{\eta}}) \subset A_{S, \bar{\eta}}$ has big Tannaka group with respect to the convolution of perverse sheaves, 
which we will explain in the next section.

\subsection{Big Tannaka groups} \label{sec:BigTannakaIntro} 
We reset notation and let $A$ be an abelian variety of dimension $g$ over an algebraically closed field $k$ of characteristic zero. Our results hold both in the algebraic and in the analytic framework. In the former, we consider perserve sheaves for the \'etale topology on $A$ with coefficients in $\bbF = \bar{\bbQ}_\ell$ for a prime~$\ell$; in the latter, we assume $k = \bbC$ and consider perserve sheaves for the classical topology on $A(\bbC)$ with coefficients in $\bbF = \bbC$. For background about perverse sheaves we refer to~\cite{BBDG}.\medskip

Let $Y$ be a smooth projective variety of dimension $d$ and $f \colon Y \to A$ a semismall morphism in the sense that $\dim Y\times_A Y = \dim Y$. The constant sheaf $\delta_Y := \bbF_Y[d]$ on~$Y$ placed in degree $-d$ is a perverse sheaf, and the semismallness of $f$ then implies that $P=f_\ast \delta_Y$ is also perverse. The group law on~$A$ induces a convolution product on perverse sheaves, and $P$ generates a neutral Tannaka category $\langle P \rangle$ with respect to convolution~\cite[3.1]{JKLM}. We fix a fiber functor
\[ \omega \colon \qquad \langle P \rangle \; \too \; \Vect(\bbF)\]
and consider the associated Tannaka group
\[ G_{Y, \omega} \; := \; \Aut^{\otimes}(\omega) \; \subset \; \GL(V) \qquad \textup{where} \qquad V \; := \; \omega(P).\]
We say that $f \colon Y \to A$ is \emph{symmetric up to translation} if there is an involution $\iota$ of~$Y$ and a point $a \in A(k)$ such that $f(\iota(x)) = a - f(x)$ for all $x \in Y$. In this case, the group $G_{Y, \omega}$ preserves a bilinear form $\theta \colon V \otimes V \to \bbF$
which is symmetric if $d$ is even and alternating otherwise. The group $G_{Y, \omega}$ is then said to be \emph{big} if the derived subgroup $G_{Y, \omega}^\ast$ of its connected component is
\[
G_{Y, \omega}^* \;=\;
\begin{cases}
\SL(V) & \textup{if $Y$ is not symmetric up to translation}, \\
\SO(V, \theta)  & \textup{if $Y$ is symmetric up to translation and $d$ is even},  \\
\Sp(V, \theta)  & \textup{if $Y$ is symmetric up to translation and $d$ is odd}.
\end{cases}
\]
If $f$ is finite birational, which is the case relevant here, then $Y$ is the normalization of its image $X \subset A$ and therefore $P=\delta_X$ is the perverse intersection complex on this image, in which case $G_{Y, \omega}$ can be thought as the Tannaka group of $X$.

\begin{bigtannakaintro} Suppose that $f$ is unramified with ample normal bundle, $d < (g - 1)/2$ and \cref{Eq:NumericalConditions} holds. Then, the following are equivalent:
\begin{enumerate}
\item $f$ is birational onto its image, and this image is nondivisible, not a product and not a symmetric power of a curve;\smallskip
\item $G_{Y, \omega}$ is big.
\end{enumerate}
\end{bigtannakaintro}

Similarly to the big monodromy criterion, the implication (2) $\Rightarrow$ (1) is easy and the main task is to prove (1)~$\Rightarrow$~(2). We follow the strategy explained in the introduction of \cite{JKLM}:\medskip

The first step is to ensure that the algebraic group $G^\ast_{Y, \omega}$ is simple. For this we do not need to assume $f$ is unramified: in \cref{Thm:SimpleTannakaGroups} we show under very mild conditions that the Tannaka group of a subvariety $X\subset A$ fails to be simple only if the subvariety is a product. We are then in good shape since~$V$ is known to be a minuscule representation of $G_{Y, \omega}^\ast$ by~\cite[cor. 5.15]{JKLM} and there are very few minuscule representations of simple algebraic groups. Except for the standard representations of classical groups (which lead to big Tannaka groups), we are left with wedge powers of the standard representation of~$\SL_n$, spin representations, and the smallest irreducible representations of the exceptional groups~$E_6$ and~$E_7$. In \cref{Thm:SmallWedgePowersAreSumsOfCurves} we show that wedge powers occur if and only if $Y$ is the symmetric power of a smooth curve and $f$ is a closed embedding; in \cref{Thm:SmallSpinDoNotExist} we exclude spin representations, and in \cref{Thm:NoExceptionals} we rule out the groups $E_6$ and $E_7$. 

\subsection*{Acknowledgements} We thank Olivier Benoist for suggesting us the motivating example in the introduction. This project has been carried out during a Research In Paris and we thank the Institut Henri Poincar\'e for their hospitality. 

\subsection*{Conventions}
By a variety over a field $k$ we mean a separated $k$-scheme of finite type. Subvarieties are always taken to be closed. We write $\tilde{X}$ for the normalization of a variety $X$ and say that $X$ has smooth unramified normalization if $\tilde{X}$ is smooth and the normalization morphism $\tilde{X} \to X$ is unramified. 
If this is the case and if~$X\subset W$ is given as a subvariety of a smooth projective variety $W$, we say that~$\tilde{X}$ has ample normal bundle if the unramified morphism $\tilde{X} \to W$ does.
%

\section{Varieties with globally generated cotangent bundle}

In this section we recall some generalities about varieties with globally generated cotangent bundles that will be used throughout this paper.

\subsection{Basic properties} \label{sec:BasicProperties} Let $f \colon Y \to A$ be a morphism between a smooth projective variety $Y$ and an abelian variety $A$ over a field $k$. By definition $f$ is unramified if and only if the cotangent map $\Lie A \otimes_k \cO_Y \to \Omega^1_Y$ is surjective. If this is the case, then the cotangent bundle of $Y$ is globally generated. For instance, if
\[ f = \alb_y \colon \quad  Y \; \too \; A \; = \; \Alb(Y)\]
is the Albanese morphism associated with a point $y \in Y(k)$, then the cotangent map is the evaluation morphism $\rH^0(Y, \cO_Y) \otimes_k \cO_Y \to \Omega^1_Y$. Therefore the Albanese morphism is unramified if and only if the cotangent bundle is globally generated. 

\begin{lemma} \label{Lem:GeneralTypeCanonicallyPolarized} Let $f \colon Y \to A$ be an unramified morphism where $Y$ is a smooth projective variety and $A$ an abelian variety. Then, the normalization $\tilde{X}$ of $X=f(Y)$ is smooth and the following conditions are equivalent:
\begin{enumerate}
\item $Y$ is canonically polarized; \smallskip
\item $\tilde{X}$ is canonically polarized; \smallskip
\item $Y$ is of general type; \smallskip
\item $\tilde{X}$ is of general type; \smallskip
\item $X$ is of general type.
\end{enumerate}
\end{lemma}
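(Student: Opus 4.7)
My plan is to first establish the smoothness of $\tilde X$ via a local analysis, then dispatch most equivalences formally, leaving a single ampleness-vs-bigness step to be handled by the Gauss map.

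Since $Y$ is smooth and hence normal, the universal property of normalization factors $f$ as $Y \xrightarrow{\pi} \tilde X \xrightarrow{g} A$ with both arrows finite. From the relative cotangent sequence together with $\Omega^1_{Y/A} = 0$ I read off $\Omega^1_{Y/\tilde X} = 0$, so $\pi$ is unramified. To deduce that $\tilde X$ is smooth and $g$ unramified, I invoke the \'etale-local structure theorem for unramified morphisms: \'etale-locally on $Y$, $f$ is a closed immersion into an \'etale neighborhood of its image in $A$, so $Y$ appears \'etale-locally as a smooth subvariety of a smooth scheme. The normalization $\tilde X$ of its image in $A$ is therefore \'etale-locally smooth, and $\pi$ becomes \'etale-locally an isomorphism onto a union of components; whence $\tilde X$ is smooth and $\pi$ is \'etale.

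Granting this, the equivalences $(3)\Leftrightarrow(4)\Leftrightarrow(5)$ follow from the birational invariance of Kodaira dimension for smooth projective varieties (applied to $\tilde X \to X$) and its preservation under finite surjective morphisms between smooth projective varieties (applied to $\pi$). The implications $(1)\Rightarrow(3)$ and $(2)\Rightarrow(4)$ are tautological since an ample canonical bundle is big, and since $\pi$ is \'etale one has $K_Y = \pi^* K_{\tilde X}$, so $K_Y$ is ample iff $K_{\tilde X}$ is, giving $(1)\Leftrightarrow(2)$. It remains to prove $(4)\Rightarrow(2)$. Unramifiedness of $g$ provides a surjection $(\Lie A)^\vee \otimes_k \cO_{\tilde X} \onto \Omega^1_{\tilde X}$, hence a Gauss morphism $\gamma\colon \tilde X \to \Gr(d, \Lie A)$ whose pullback of the Pl\"ucker line bundle recovers $K_{\tilde X}$. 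In particular $K_{\tilde X}$ is globally generated, and it is ample (resp.\ big) exactly when $\gamma$ is finite (resp.\ generically finite). The classical description of degenerate Gauss fibers (after Griffiths--Harris and Ran) identifies the positive-dimensional fibers of $\gamma$ with orbits of the connected component of $\Stab_A(X)$, and standard structural results on subvarieties of abelian varieties (Ueno) imply that a subvariety with positive-dimensional translation stabilizer fibers over a strictly lower-dimensional base with abelian fibers, and so is not of general type. Hence $\tilde X$ of general type forces $\gamma$ to be finite, and thereby $K_{\tilde X}$ to be ample.

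The main obstacle I anticipate is the last Gauss-map step: one must carefully transfer the classical dichotomy from closed subvarieties $X \subset A$ to their smooth normalization $\tilde X$, tracking how $\Stab_A(X)$ controls the degeneracy locus of $\gamma$ on $\tilde X$. The smoothness of $\tilde X$ is a secondary technical point, which I handle via the local structure of unramified morphisms rather than by attempting a direct flatness argument for $\pi$.
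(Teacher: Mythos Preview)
Your reduction of the equivalences to the single implication $(4)\Rightarrow(2)$ is the same as the paper's, and your argument for the smoothness of $\tilde X$ via the \'etale-local structure of unramified morphisms works but is more roundabout than the paper's: there one simply notes that $\pi\colon Y\to\tilde X$ is a surjective unramified morphism from a regular scheme to a normal one, hence \'etale by \cite[\href{https://stacks.math.columbia.edu/tag/0BTF}{Lemma 0BTF}]{stacks-project}, so $\tilde X$ is smooth.

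The substantive issue is your Gauss-map argument for $(4)\Rightarrow(2)$. What Griffiths--Harris and Ran establish is a statement about the \emph{generic} fibre of $\gamma$: it is (an open subset of) a coset of a fixed abelian subvariety, and this subvariety is positive-dimensional exactly when $\Stab_A(X)^\circ$ is. Combined with Ueno, that gives ``$\tilde X$ of general type $\Leftrightarrow$ $\gamma$ generically finite'', which is just the tautology $K_{\tilde X}$ big $\Leftrightarrow$ $K_{\tilde X}$ big. Your claim that \emph{every} positive-dimensional fibre of $\gamma$ is a $\Stab_A(X)^\circ$-orbit is not what those references provide, and it is exactly what would be needed to pass from generically finite to finite; you flag this as the main obstacle, but do not close it. Concretely, when $\Stab_A(X)$ is finite one must still rule out isolated curves $C\subset\tilde X$ with $K_{\tilde X}\cdot C=0$, and the classical Gauss-map dichotomy says nothing about such special fibres.

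The paper handles this step quite differently: since $\Omega^1_Y$ is globally generated, $K_Y$ is nef; if it is also big, the augmented base locus $B_+(K_Y)$ is covered by rational curves (Boucksom--Pacienza), and since $A$ contains none and $f$ is finite, $B_+(K_Y)$ is finite. Then $K_Y$ is ample. This bypasses the special-fibre problem entirely by importing a result from birational geometry rather than relying on the second-order geometry of the Gauss map.
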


\begin{proof} First of all, the morphism $g \colon Y \to \tilde{X}$ induced by $f$ is unramified, hence \'etale by \cite[\href{https://stacks.math.columbia.edu/tag/0BTF}{Lemma 0BTF}]{stacks-project}, thus $\tilde{X}$ is smooth. The \'etale morphism $g$ induces an isomorphism between the canonical bundle of $Y$ and the pull-back of $\tilde{X}$, yielding the following implications:
\[ 
(1) \iff (2) \implies (3) \iff (4) \iff (5). 
\]
It suffices to show that if $Y$ is of general type, then the canonical bundle $\cK_Y$ of~$Y$ is ample. Since $Y$ is smooth, being of general type means that its canonical bundle~$\cK_Y$ is big. It is also nef, because $\Omega^1_Y$ is globally generated. In this case the non-ample locus $Z : = \rB_+(\cK_Y)$ is covered by rational curves; see for instance \cite[Corollary~A]{BoucksomPacienza}. Since there are no rational curves on $A$ and $f$ is finite, $Z$ must be finite. Then by \cite[Theorem A]{BoucksomCacciola} the morphism $h_d \colon Y \to \bbP(\rH^0(Y, \cK_Y^{\otimes d})^\vee)$ is an isomorphism outside $Z$ for any integer $d \ge 1$ divisible enough. In other words~$h_d$ is finite and $\cK_Y = h_d^\ast \cO(1)$ is ample.
\end{proof}

\subsection{Models} \label{sec:Models} Let $K$ be a number field, $\Sigma$ a finite set of places and $R = \cO_{K, \Sigma}$. For a smooth scheme $\cX$ over $R$ we denote by $\cK_{\cX/R} = \det \Omega^1_{\cX / R}$ its relative canonical bundle. We will use the following key property:

\begin{lemma} \label{Lemma:CanonicalBundleGoodModel} Let $X$ be a smooth proper $R$-scheme such that, for any ring morphism $R \to k$ to an algebraically closed field~$k$, the base change~$X_{k}$ admits an unramified morphism towards an abelian variety. If the generic fiber $X_K$ is of general type, then the relative canonical bundle~$\cK_{X/R}$ is relatively ample.
\end{lemma}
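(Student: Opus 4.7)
The plan is to reduce the relative ampleness of $\cK_{X/R}$ to fiberwise ampleness, and then propagate the general-type property from the generic fiber using upper semi-continuity of plurigenera.

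Since $X \to \Spec R$ is proper and $R$ is noetherian, the standard fiberwise criterion for relative ampleness (EGA III, 4.7.1) says that $\cK_{X/R}$ is $R$-relatively ample as soon as $\cK_{X_s/k(s)}$ is ample on every fiber $X_s$. Ampleness of a line bundle on a proper variety is invariant under extension of the base field, so this can be checked after pulling back to an arbitrary geometric point $\bar s = \Spec k \to \Spec R$. By hypothesis, $X_{\bar s}$ admits an unramified morphism to an abelian variety, so \cref{Lem:GeneralTypeCanonicallyPolarized} reduces the problem to showing that every geometric fiber $X_{\bar s}$ is of general type.

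For this I would invoke upper semi-continuity of cohomology. The morphism $X \to \Spec R$ is smooth and proper, hence flat, and so is $\cK_{X/R}^{\otimes m}$ for each $m \ge 1$. Therefore the function $s \mapsto h^0(X_s, \cK_{X_s/k(s)}^{\otimes m})$ on $\Spec R$ is upper semi-continuous, and in particular bounded below by its value at the generic point:
\[
h^0(X_s, \cK_{X_s/k(s)}^{\otimes m}) \;\ge\; h^0(X_K, \cK_{X_K/K}^{\otimes m}) \qquad \text{for every $s \in \Spec R$ and every $m \ge 1$.}
\]
Since $X_K$ is smooth proper of general type, for $d = \dim X_K$ (the common fiber dimension) there exist $c > 0$ and $m_0 \ge 1$ with $h^0(X_K, \cK_{X_K/K}^{\otimes m}) \ge c\, m^d$ for all $m \ge m_0$. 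The same lower bound then holds on every $X_s$, which is therefore of general type; this passes to geometric fibers because bigness of a line bundle on a smooth proper variety is invariant under algebraic base change. Combined with the reduction in the previous paragraph, this gives the claim.

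I do not anticipate a real obstacle: the argument is a straightforward packaging of Grothendieck's fiberwise criterion for relative ampleness, \cref{Lem:GeneralTypeCanonicallyPolarized}, and upper semi-continuity of cohomology. The substantive point is that the hypothesis on $X_{\bar s}$—admitting an unramified morphism to an abelian variety—is precisely what, via \cref{Lem:GeneralTypeCanonicallyPolarized}, upgrades bigness of the canonical bundle to ampleness, so the rather weak lower bound supplied by semi-continuity is sufficient.
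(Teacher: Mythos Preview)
Your proposal is correct and follows essentially the same approach as the paper: reduce relative ampleness to fiberwise ampleness, invoke \cref{Lem:GeneralTypeCanonicallyPolarized} to reduce ampleness on each geometric fiber to bigness, and propagate bigness from the generic fiber via upper semi-continuity of $h^0$ of pluricanonical bundles. The paper's proof is a terser version of exactly this argument.
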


\begin{proof} We show that for any $s \in S = \Spec R$ the canonical bundle $\cK_{s} = \cK_{X_s}$ of the fiber $X_s$ at $s$ is ample. In view of \cref{Lem:GeneralTypeCanonicallyPolarized} it suffices to show that $\cK_{s}$ is big. For the generic point of $S$ this is true because $X_K$ is assumed to be of general type. When $s$ is a closed point, for any $n \ge 1$ we have $h^0(X_s, \cK_{s}^{\otimes n}) \ge h^0(X_K, \cK_{X_K}^{\otimes n})$ by semicontinuity. Therefore $\cK_s$ is big, which concludes the proof.
\end{proof}

\begin{lemma} \label{lemma:FinitenessEtaleCovers} Let $d \ge 1$ and let $\cY$ be a projective and flat scheme over $R$ with geometrically integral generic fiber. Then, up to isomorphism of schemes over $\cY$, there are only finitely many unramified surjective morphisms $f \colon \cX \to \cY$ with $\cX$ projective smooth over $R$, $\cK_{\cX/R}$ relatively ample and $\deg f \le d$. 
\end{lemma}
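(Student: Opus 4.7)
\emph{Plan.} My plan is to reduce the finiteness of such morphisms to the classical finiteness of finite étale covers of bounded degree of a scheme of finite type over $R=\cO_{K,\Sigma}$, via the smooth locus of $\cY$.

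First, every such $f$ is finite: it is proper (as $\cX \to \Spec R$ is proper and $\cY \to \Spec R$ is separated) and quasi-finite (any unramified morphism has discrete fibers), hence finite of generic degree at most $d$. Let $\cU \subset \cY$ denote the relative smooth locus of $\cY \to \Spec R$. Since $\cY$ is flat over $R$ with geometrically integral generic fiber and $\characteristic(K)=0$, the generic point of $\cY$ lies in $\cU$, so $\cU$ is dense in $\cY$. Moreover each connected component of $\cX$ is smooth of the same relative dimension as $\cY$ and thus surjects onto $\cY$ (its image being closed of full dimension in the irreducible scheme $\cY$), so $\cV:=f^{-1}(\cU)$ is dense open in $\cX$ and $f\vert_{\cV}\colon \cV \to \cU$ is a finite étale cover of degree at most $d$, since miracle flatness promotes an unramified morphism between regular schemes of the same relative dimension to an étale one.

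Second, I claim the assignment $(\cX,f) \mapsto (f\vert_{\cV}\colon \cV \to \cU)$ is injective on isomorphism classes over $\cY$. Suppose two such pairs $(\cX_1,f_1)$ and $(\cX_2,f_2)$ produce $\cU$-isomorphic étale covers via $\psi\colon \cV_1 \stackrel{\sim}{\to} \cV_2$. Because each $\cX_i$ is smooth, hence normal, and $\cV_i$ is dense open in $\cX_i$, each $\cX_i$ is the normalization of $\cY$ in the finite $\cO_\cY$-algebra $j_*f_{i*}\cO_{\cV_i}$, where $j\colon \cU \into \cY$ is the open immersion. The isomorphism $\psi$ identifies these algebras and so extends uniquely to a $\cY$-isomorphism $\cX_1 \stackrel{\sim}{\to} \cX_2$.

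Third, I would invoke the classical finiteness theorem that any connected normal scheme of finite type over $R=\cO_{K,\Sigma}$ admits only finitely many isomorphism classes of finite étale coverings of bounded degree. This combines Grothendieck's topological finite generation of the geometric étale fundamental group of a variety over a field of characteristic zero (SGA~1) with the Hermite-Minkowski theorem on the finiteness of extensions of $K$ of bounded degree unramified outside $\Sigma$, using the exact sequence $\pi_1^{\et}(\cU_{\bar K}) \to \pi_1^{\et}(\cU) \to \Gal(K_\Sigma/K)$. The hypotheses apply to $\cU$ since its generic fiber is open in the geometrically integral scheme $Y_K$, hence geometrically connected. The main obstacle is the injectivity step: it relies on the density of $\cU$ in $\cY$ (requiring flatness of $\cY/R$ together with generic smoothness in characteristic zero) and on the normality of $\cX$ (provided by its smoothness over $R$); the hypothesis that $\cK_{\cX/R}$ is relatively ample, while essential for the applications of the lemma, plays no direct role in this argument.
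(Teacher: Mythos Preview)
Your approach is essentially correct but genuinely different from the paper's.

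The paper does not restrict to the relative smooth locus; instead it passes globally to the normalization $\tilde{\cY}$ of $\cY$. Since $\cX$ is normal, every $f$ in the statement factors through an unramified morphism $\cX \to \tilde{\cY}$, which is then \'etale by \cite[\href{https://stacks.math.columbia.edu/tag/0BTF}{Lemma 0BTF}]{stacks-project}, so $\tilde{\cY}$ is itself smooth over $R$. The finiteness is then reduced, via \cite[lemma~2.7]{KM}, to the finiteness of finite \'etale covers of bounded degree of the \emph{geometric generic fibre} $\tilde{Y}_{\bar K}$, which follows immediately from the finite generation of the topological fundamental group of $\tilde{Y}(\bbC)$. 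In particular no Hermite--Minkowski is invoked: the arithmetic part is absorbed into the cited lemma from~\cite{KM}, and this is exactly where the relative ampleness of $\cK_{\cX/R}$ enters --- contrary to your final remark, it is used there (in a Matsusaka--Mumford style argument) to descend an isomorphism of geometric generic fibres back to an isomorphism over~$R$.

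Your route --- restriction to $\cU$, recovery of $\cX$ from $\cV$ as a normalization, and a direct count of \'etale covers of the arithmetic scheme $\cU$ via SGA1 plus Hermite--Minkowski --- is a legitimate alternative that trades the black-box reference to \cite{KM} for a self-contained arithmetic argument. One point that deserves more care: the claim that each connected component of $\cX$ has the same relative dimension as $\cY$ does not follow from surjectivity of $f$ alone (nothing a priori forbids adjoining a lower-dimensional smooth component mapping unramifiedly into $\cY$), yet both your miracle-flatness step and the density of $\cV$ in $\cX$ depend on it. The paper's global passage to $\tilde{\cY}$ together with 0BTF handles this in one stroke.
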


\begin{proof} Let $\tilde{\cY}$ be the normalization of $\cY$. We may suppose that there exists an unramified surjective morphism $\cX \to \cY$ with $\cX$ projective smooth over $R$,  the statement being trivial otherwise. The induced morphism $\cX \to \tilde{\cY}$ is unramified, hence \'etale by \cite[\href{https://stacks.math.columbia.edu/tag/0BTF}{Lemma 0BTF}]{stacks-project}. It follows that $\tilde{\cY}$ is smooth over $R$. Let $\bar{K}$ be an algebraic closure of $K$ and $\tilde{Y} = \tilde{\cY}_{\bar{K}}$ the base change to $\bar{K}$ of the generic fiber of~$\tilde{\cY}$. By \cite[lemma 2.7]{KM} it suffices to show that, up to isomorphism of schemes over $\tilde{Y}$, there are only finitely many finite \'etale morphisms $X \to \tilde{Y}$ of degree $\le d$. This is standard, as we may fix an embedding $\bar{K} \into \bbC$ and use that the topological fundamental group of the complex manifold $\tilde{Y}(\bbC)$ is finitely generated. 
\end{proof}

\subsection{Normalization and stabilizer} \label{sec:NormalizationStabilizer} 
Let $X \subset A$ a geometrically integral subvariety of an abelian variety $A$ over a field $k$. Recall that the stabilizer $\Stab_A(X)$ is the algebraic subgroup of $A$ whose points with values in a $k$-algebra $R$ are
\[ \{ a \in A(R) : X_R + a = X_R\}. \]
Let $f \colon \tilde{X} \to X$ be the normalization. Suppose that $G = \Stab_A(X)$ is smooth. This is always the case if $k$ of characteristic zero; when $k$ is of characteristic $p > 0$, this is true for instance if $G$ is finite of rank prime to $p$. Under the smoothness assumption the action of $G$ lifts to an action on $\tilde{X}$ by functoriality of the normalization and  the compability of its construction with respect to smooth base change \cite[\href{https://stacks.math.columbia.edu/tag/081J}{Section 081J}]{stacks-project}. It follows that we have the following commutative diagram
\[ 
\begin{tikzcd}
\tilde{X} \ar[d, "\tilde{\pi}"'] \ar[r, "f"]& X \ar[d, "\pi"] \\
\tilde{X}' := \tilde{X} / G \ar[r, "g"] & X' := X/G
\end{tikzcd}
\]
where $\pi$ and $\tilde{\pi}$ are the quotient morphisms. Note that $Z$ is integral  and $\tilde{Z}$ is normal by \cite[p. 5]{GIT}.

\begin{lemma} \label{lemma:StabNorm} The morphism $g \colon \tilde{X}' \to X'$ is the normalization. Moreover, if $\tilde{X}$ is smooth, then so is $\tilde{X}'$.
\end{lemma}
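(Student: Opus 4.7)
The plan is to reduce the first assertion to showing that $g$ is finite and birational (combining this with the asserted normality of $\tilde X'$ will identify $g$ with the normalization), and then to deduce the smoothness claim from the torsor structure of $\tilde\pi$.

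The starting observation is that the $G$-action on $X$ is free: if $a \in G(\bar k)$ fixes a point $x \in X(\bar k)$ then $a + x = x$ in $A(\bar k)$, forcing $a = 0$. This freeness lifts to $\tilde X$, since any geometric fixed point upstairs would descend via $f$ to a geometric fixed point on $X$. Because $G$ is smooth, the quotient morphisms $\pi$ and $\tilde\pi$ are $G$-torsors; in particular they are faithfully flat with fibers of dimension $\dim G$ (finite \'etale of degree $|G|$ when $G$ is finite).

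Birationality of $g$ then follows on the level of function fields: from the torsor structure one has $k(\tilde X') = k(\tilde X)^G$ and $k(X') = k(X)^G$, and since $f$ is birational we have $k(\tilde X) = k(X)$, so $k(\tilde X') = k(X')$. For finiteness, the composite $g \circ \tilde\pi = \pi \circ f$ is proper with fibers of dimension $\dim G$ (as $f$ is finite and the fibers of $\pi$ are translates of $G$); since $\tilde\pi$ is faithfully flat with fibers of the same dimension $\dim G$, a fiber-dimension count forces $g$ to have finite fibers, and properness descends along the proper surjection $\tilde\pi$, so $g$ is finite. Combined with the normality of $\tilde X'$, this identifies $g$ with the normalization of $X'$.

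For the last statement, if $\tilde X$ is smooth then $\tilde\pi$, being a torsor under the smooth group scheme $G$, is a smooth surjective morphism, and smoothness descends from $\tilde X$ to $\tilde X'$. The only mildly delicate point in the argument is the finiteness of $g$ when $G$ is positive-dimensional, but as indicated the torsor structure reduces this to a formal fiber-dimension comparison.
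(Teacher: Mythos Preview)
Your proof is correct and follows essentially the same route as the paper: both observe that the $G$-action is free on $X$ and hence on $\tilde X$, deduce that $\pi$ and $\tilde\pi$ are torsors (the paper phrases this as ``have the same degree''), conclude that $g$ is finite birational onto the normal variety $\tilde X'$, and obtain smoothness of $\tilde X'$ from the free quotient (the paper cites \cite[prop.~0.9]{GIT} where you invoke descent along the smooth torsor $\tilde\pi$). Your treatment of the positive-dimensional $G$ case via a fiber-dimension comparison is a bit more explicit than the paper's terse degree count, but the underlying argument is the same.
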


\begin{proof}  The action of $G$ on $X$ is free, thus the action on $\tilde{X}$ is free. It follows that $\pi$ and~$\tilde{\pi}$ have same degree, thus $g$ is finite birational because $f$ is so. Therefore $g$ is the normalization. If $\tilde{X}$ is smooth, then so is $\tilde{X}'$ by \cite[prop. 0.9]{GIT}.
\end{proof}

\section{Proof of \cref{IntroThmExtrinsic}}
\label{sec:Applications}

In this section we explain how to deduce the \cref{IntroThmExtrinsic} from the big monodromy criterion in \cref{sec:BigMonodromy} and the nondensity result in \cite[th. D]{JKLM}. 

\subsection{Finiteness for subvarieties of abelian varieties} The proof of~\cref{IntroThmExtrinsic} is similar to \cite[2.2-2.3]{KM}, but the arguments at the level of Hilbert schemes are a bit subtler, so we include them here. The key point is a Shafarevich-type finiteness result for subvarieties of abelian varieties that have smooth unramified normalization with ample normal bundle.\medskip

Let $K$ be a number field, $\Sigma$ a finite set of primes in $K$ and $\cO_{K, \Sigma} \subset K$ the ring of $\Sigma$-integers. Let $A$ be an abelian variety over $K$ with good reduction and $\cA$ the unique abelian scheme over $R$ extending $A$. Fix an ample line bundle $L$ on $A$.

\begin{theorem} \label{Thm:FinitenessNormalization}Fix $P \in \bbQ[z]$ of degree $d < (g-1)/2$. Then up to translation by points in $A(K)$ there are only finitely many geometrically integral subvarieties $X \subset A$ which are not a product with the following properties:
\begin{enumerate}
\item $X$ has smooth unramified normalization with ample normal bundle,\smallskip
\item $\tilde{X}$ satisfies the numerical conditions~\ref{Eq:FirstNumericalCondition} and~\ref{Eq:NumericalConditions},\smallskip
\item the normalization $\tilde{\cX}$ of the Zariski closure $\cX \subset \cA$ of $X$ is smooth over $R$,\smallskip
\item $\tilde{X}$ has Hilbert polynomial $P$ with respect to $L$.
\end{enumerate}
\end{theorem}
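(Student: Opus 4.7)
The plan is to follow the strategy of~\cite[2.2-2.3]{KM}: assemble a parameter scheme $H$ for the normalizations $\tilde{X}$ together with the morphism $\tilde{X}\to A$, stratify $H$ according to the dichotomy provided by the big monodromy criterion, and on each stratum invoke the nondensity theorem~\cite[th. D]{KM} to rule out dense $\cO_{K,\Sigma}$-integral points. The main feature new to the present setting is that we parametrize finite unramified morphisms $\tilde{X}\to A$ rather than embedded smooth subvarieties of $A$.

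Concretely, let $H$ be the $K$-scheme whose $T$-points classify pairs $(\tilde{X}_T,f_T)$ with $\tilde{X}_T\to T$ smooth projective with geometrically integral fibers and $f_T\colon\tilde{X}_T\to A_T$ finite unramified with ample normal bundle such that $f_T^\ast L$ has Hilbert polynomial $P$ on each fiber. Since $P$ bounds $\deg f$ and the numerical invariants of $\tilde{X}$ needed to build an auxiliary polarization, $H$ is of finite type over $K$, constructed as a locally closed subspace of a suitable Hilbert scheme of $A\times\bbP^N$. The group $A$ acts on $H$ by translation on the second factor, and condition~(3) of the theorem together with~\cref{lemma:FinitenessEtaleCovers} ensures that each subvariety $X$ from the statement yields an $\cO_{K,\Sigma}$-integral point of the quotient $H/A$. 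It therefore suffices to show that the set of such integral points is finite.

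By Noetherian induction on the irreducible components of $H/A$, one reduces to proving non-density of $\cO_{K,\Sigma}$-integral points on any positive-dimensional component $S$ whose generic geometric fiber satisfies all the open hypotheses of the big monodromy criterion. The generic image $f(\cY_{\bar\eta})$ is not a product (the open condition assumed in the theorem). It is not constant up to translation, else the component projects to a single point of $H/A$. If it is divisible, one factors through a quotient of $A$ and reduces to an abelian variety of strictly smaller dimension, inducting on $g$. If it is a symmetric power of a curve $C\subset A$, then $C$ itself has bounded genus and degree and inherits good reduction from $\tilde{\cX}$, so Faltings' Shafarevich conjecture for curves leaves only finitely many such $C$ up to $A(K)$-translation. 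Once these situations are excluded, the big monodromy criterion of~\cref{sec:BigMonodromy} gives big monodromy for most tuples of torsion characters on the universal family over $S$, and~\cite[th. D]{KM} then delivers the required non-density.

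The main obstacle is the careful construction and stratification of $H$: one must verify that the exclusion of products, of divisible subvarieties, of those constant up to translation, and of symmetric powers of curves correspond to constructible loci, so that the Noetherian induction and the invocation of~\cite[th. D]{KM} on each piece proceed cleanly. A secondary subtlety, absent from~\cite{KM}, is that $P$ now controls $\tilde{X}$ through $f^\ast L$ rather than through an intrinsic polarization of~$X$; one needs to bound $\deg f$ in terms of $P$ and to obtain a uniform auxiliary polarization on $\tilde{X}$ across $H$.
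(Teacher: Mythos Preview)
Your overall architecture matches the paper: assemble a finite-type parameter space, argue by Noetherian induction on irreducible components, and on each component either conclude constancy up to translation or feed the big monodromy criterion into the nondensity theorem~\cite[th.~D]{KM}. Two points, however, diverge from the paper in ways that matter.

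\medskip

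\textbf{Divisibility.} Your reduction ``factors through a quotient of $A$ and reduces to an abelian variety of strictly smaller dimension, inducting on~$g$'' is incorrect. A divisible subvariety $X\subset A$ is one stabilized by a nontrivial \emph{finite} subgroup $G\subset A$; the quotient $A/G$ is isogenous to $A$ and has the \emph{same} dimension~$g$, so no induction on~$g$ is available. The paper treats this case separately: it first proves the theorem for nondivisible $X$, then in the general case bounds the order of $G=\Stab_A(X)$ by showing it divides $\chi_{\top}(\tilde{X})$ (via \cref{lemma:StabNorm}), enlarges $\Sigma$ so that each such $G$ extends to a finite \'etale subgroup scheme $\cG\subset\cA$, and passes to the nondivisible image $X/G$ inside the abelian variety $A/G$ of dimension~$g$. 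Verifying that good reduction survives this quotient is exactly the content of that step.

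\medskip

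\textbf{Parameter space.} You parametrize the pairs $(\tilde{X},f)$ directly. The paper instead parametrizes $X$ via the Hilbert scheme of $A$ and confronts the difficulty that normalization does not commute with arbitrary base change: it produces (\cref{Lemma:InterestingPartHilbScheme}) a finite constructible cover of the relevant locus $H_{L,P}\subset\Hilb_A$ on each piece of which the fiberwise normalization is the restriction of the global one. Your approach may well work, but then the claim that $H$ is of finite type and that condition~(3) yields an $\cO_{K,\Sigma}$-point needs justification; in particular \cref{lemma:FinitenessEtaleCovers} is not what produces integral points---it is a finiteness statement for \'etale covers used elsewhere---and since $\tilde{X}\to X$ is birational the degree of $f$ onto its image is always~$1$, so bounding $\deg f$ is not the issue. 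The actual point is to bound the Hilbert polynomial of $X$ (not $\tilde{X}$) so as to land in finitely many components of $\Hilb_A$, which the paper does via~\cite[exp.~XIII, cor.~6.11]{SGA6}.

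\medskip

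Your proposed treatment of the symmetric-power case via Faltings for curves is a reasonable alternative in spirit; the paper does not spell this out here but defers to the argument of~\cite[th.~2.3]{KM}. If you pursue that route you must check that good reduction of $\tilde{\cX}\simeq\Sym^d\tilde{\cC}$ forces good reduction of $\tilde{\cC}$.
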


Before deducing \cref{Thm:FinitenessNormalization} from the big monodromy criterion in \cref{sec:BigMonodromy}, let us show how it implies \cref{IntroThmExtrinsic}:

\begin{proof}[Proof of \cref{IntroThmExtrinsic}] Let $f \colon \cY \to \cA$ be an unramified morphism with $\cY$ projective smooth over $\cO_{K, \Sigma}$ and let $f \colon Y \to A$ be its generic fiber. The point is that the normalization of $\tilde{X}$ of $X = f(Y)$ has good reduction in the sense of \cref{Thm:FinitenessNormalization} (3) and Hilbert polynomial controlled by that of $X$. More precisely, the normalization $\tilde{\cX}$ of $\cX = f(\cY)$ is smooth over $\cO_{K, \Sigma}$ because the induced morphism $\cY \to \cX$ is unramified, thus \'etale by \cite[\href{https://stacks.math.columbia.edu/tag/0BTF}{Lemma 0BTF}]{stacks-project}. If $Y$ is integral and $f^\ast L$  has Hilbert polynomial $P(t)$, then $\tilde{X}$ has Hilbert polynomial $Q(t) := P(t) / \deg f$ with respect to $L$. Now there are only finitely many integers $n > 0$ such that the polynomial $P(t) / n$ takes integral values at nonnegative integers. Thus there is a finite subset $\cQ \subset \bbQ[t]$ depending only on $P$ such that $Q \in \cQ$. With these considerations, \cref{Thm:FinitenessNormalization} implies that up to translation by points in $A(K)$ there are only finitely many geometrically integral subvarieties $X \subset A$ which are not a product and
\begin{itemize} 
\item $X$ is the image of an unramified morphism $f \colon Y \to A$ with smooth source $Y$, ample normal bundle and good reduction in the sense of \cref{sec:ExtrinsicResults},\smallskip
\item the Hilbert polynomial of $f^\ast L$ is $P$,\smallskip
\item $Y$ satisfies conditions~\ref{Eq:FirstNumericalCondition} and~\ref{Eq:NumericalConditions}.
\end{itemize}
By \cref{lemma:FinitenessEtaleCovers} there are only finitely many \'etale covers of the normalization of such a variety $X$, which concludes the proof.
\end{proof}

The rest of this section is devoted to the proof of \cref{Thm:FinitenessNormalization}. As a preliminary step, we need to pass from the relevant Hilbert scheme to a certain constructible cover as explained in the next section.

\subsection{A constructible cover} For an ample line bundle~$L\in \Pic(A)$ and $P \in \bbQ[t]$, consider the subset 
\[ H_{L, P} \subset \Hilb_{A}\]
which consists of the images of all geometric points $\Spec \Omega \to \Hilb_A$ corresponding to integral subvarieties $X\subset A_\Omega$ such that
\begin{itemize}
\item $X$ is nondivisible, not a product, and has smooth unramified normalization with ample normal bundle,\smallskip
\item $\tilde{X}$ has Hilbert polynomial $P$ with respect to $L$ and satisfies the numerical conditions~\ref{Eq:FirstNumericalCondition} and~\ref{Eq:NumericalConditions}.
\end{itemize}

The main obstacle compared to~\cite[2.2]{KM} is that the normalization does not behave well with respect to arbitrary base change. In particular, there is no natural parameter space for the normalization of subvarieties of $A$. Instead, we will cut $H_{L, P}$ into finitely many constructible pieces over which normalization is well-behaved, similarly to what is done in \cite[prop. 2.4]{KollarInventiones}. To do this, given a constructible subset $Z \subset A$ of the Hilbert scheme  of $A$, we consider the restriction $\cX_Z \to Z$ to $Z$ of the universal family on $\cX \to \Hilb_A$ and let $\pi_Z \colon \tilde{\cX}_Z \to \cX_Z$ be the normalization of the scheme $\cX_Z$. Then, we have the following:

\begin{lemma} \label{Lemma:InterestingPartHilbScheme} There is a cover of $H_{L, P}$ by a finite collection of pairwise disjoint constructible subsets~$Z \subset \Hilb_A$ of finite type over $K$ such that $\tilde{\cX}_Z$ is smooth over~$Z$, $\pi_Z$ is unramified and, for each $z \in Z$, the fiber of $\pi_Z$ at $z$ is the normalization of~$\cX_z$. 
\end{lemma}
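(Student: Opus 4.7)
The plan is to perform a Noetherian induction on (a finite-type sub-scheme of) the Hilbert scheme, with the key technical ingredient being the classical fact that the formation of the normalization of a flat family of geometrically integral varieties commutes with base change over a dense open of the base. Since the dimension and the degree with respect to $L$ of any $X\subset A$ whose normalization has Hilbert polynomial $P$ are determined by $P$ (the normalization morphism being finite and birational), boundedness arguments show that $H_{L,P}$ is contained in a finite union of components of $\Hilb_A$ of finite type over $K$, providing the Noetherian framework.

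It will suffice to show that for any non-empty locally closed integral subscheme $W\subset\Hilb_A$ whose generic point lies in $H_{L,P}$, there exists a dense open subset $Z\subset W$ for which $\tilde{\cX}_Z\to Z$ is smooth, $\pi_Z\colon\tilde{\cX}_Z\to\cX_Z$ is unramified, and the fiber of $\pi_Z$ at each $z\in Z$ is the normalization of $\cX_z$. Iterating on $H_{L,P}\setminus Z$ and passing to successive set-theoretic differences will then produce a finite pairwise disjoint constructible cover of $H_{L,P}$. First I would apply generic flatness to the universal family $\cX_W\to W$ and, using openness of geometric integrality in a flat family, shrink $W$ so that $\cX_W\to W$ is flat with geometrically integral fibers; this also forces $\cX_W$ itself to be integral. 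Next I would invoke the classical generic compatibility of normalization with base change (in the style of EGA IV or the relative normalization results in Koll\'ar's treatment of moduli of varieties of general type) to produce a dense open $U\subset W$ such that the normalization $\tilde{\cX}_W\to\cX_W$ pulled back over $U$ is flat over $U$ and its fiber at every $u\in U$ is canonically identified with the normalization of $\cX_u$. Finally, since the generic point of $U$ lies in $H_{L,P}$, the fiber of the normalization over it is smooth and the normalization morphism at that fiber is unramified by the very definition of $H_{L,P}$; because smoothness in a flat family and unramifiedness of a morphism are open conditions, one further shrinking to a dense open $Z\subset U$ guarantees these properties everywhere on $Z$, yielding the required stratum.

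The main obstacle is the generic normalization step: the normalization functor does not commute with arbitrary base change, so one genuinely needs the hypothesis of a flat family with geometrically integral fibers, secured in the previous step, to pick out a dense open of $W$ over which fiberwise and total-space normalizations agree. Once this input is in hand, the other ingredients (generic flatness, openness of geometric integrality, openness of smoothness in a flat family, openness of the unramified property) are routine, and the Noetherian induction terminates after finitely many steps, producing the desired finite pairwise disjoint constructible cover.
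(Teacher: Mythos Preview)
Your overall strategy matches the paper's: bound $H_{L,P}$ inside finitely many components of $\Hilb_A$ via the degree (which agrees with that of the normalization since $\tilde X\to X$ is birational), then run a Noetherian induction using the Koll\'ar-type generic compatibility of normalization with base change, and finally use openness of smoothness and of unramifiedness to cut out the desired strata. The paper cites exactly the same source \cite[prop.~2.4]{KollarInventiones} for the key normalization step.

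There is, however, a genuine gap in how you set up the induction. You claim it suffices to treat integral locally closed $W$ whose \emph{generic point lies in} $H_{L,P}$, and then iterate on $H_{L,P}\setminus Z$. But you never justify that such a $W$ can always be found: a priori $H_{L,P}$ is only a set of points defined by geometric conditions, and one of these conditions---``not a product''---is neither open nor closed, so the generic point of an irreducible component of $\overline{H_{L,P}}$ need not lie in $H_{L,P}$. Without this, your iteration may never get started, or may not terminate.

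The paper circumvents this in two ways. First, it stratifies the entire open locus $U\subset\Hilb_A$ (where the universal family has geometrically integral fibers) into constructible pieces on which normalization commutes with base change, without any hypothesis on generic points; on each such piece the locus where $\tilde\cX_Z\to Z$ is smooth and $\pi_Z$ is unramified is open and automatically contains $H_{L,P}$ there, since these properties can be checked fiberwise. Second, the paper separately argues that $Z\cap H_{L,P}$ is constructible, handling the product condition by a spreading-out argument. You can repair your proof by the first device: drop the hypothesis that the generic point of $W$ lies in $H_{L,P}$, produce the dense open $U\subset W$ where normalization is compatible, take $Z\subset U$ to be the (possibly empty) open where smoothness and unramifiedness hold, observe $H_{L,P}\cap U\subset Z$, and recurse on $W\setminus U$.
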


\begin{proof} By definition any geometric point in $H_{L, P}$ corresponds to a subvariety $X$ whose normalization $f \colon \tilde{X} \to X$ has Hilbert polynomial $P$ with respect to $L$. To show that these varieties form a bounded family we need to control the Hilbert polynomial of $X$ rather than the one of its normalization $\tilde{X}$. This is possible as~$X$ is reduced: in this case, by \cite[exp.~XIII, cor. 6.11]{SGA6} the Hilbert polynomial is controlled by the degree of $X$ with respect to $L$, that is, the top self-intersection $L^d_{\rvert X}$ of $L_{\rvert X}$ where $d = \dim X$. As the normalization morphism $f$ is birational, the degree of $X$ coincides with that of $\tilde{X}$ which is in turn $d!$ times the leading coefficient of $P$. It follows that $H_{L, P}$ lies in finitely many components of $\Hilb_{A}$. Therefore it is of finite type over $K$ as soon as it is constructible.\medskip

To prove the existence of the constructible cover in the statement, consider the open subset of $\Hilb_A$ where the universal family $\cX \to \Hilb_A$ has geometrically integral fibers. The above discussion shows that $H_{L, P}$ meets only finitely many connected components of such an open subset, and we denote by $U$ their union. Since $U$ is of finite type, arguing as in the proof of \cite[prop. 2.4]{KollarInventiones}, there is a finite cover of $U$ by pairwise disjoint constructible subsets $Z$ such that, $\tilde{\cX} \to Z$ is smooth, $\pi_Z$ is unramified and, for any $z \in Z$, the fiber of $\pi_Z \colon \tilde{\cX}_Z \to \cX_z$ at $z$ is the normalization of $\cX_z$.\medskip

To conclude the proof, it suffices to show $Z \cap H_{L, P}$ is constructible. Ampleness and being nondivisible are open conditions, and the same holds for conditions~\ref{Eq:FirstNumericalCondition} and~\ref{Eq:NumericalConditions}. Being a product is not a closed condition but by Noetherian induction a spreading out argument as in the proof of \cite[lemma 2.1]{KM} shows that $Z \cap H_{L, P}$ is constructible.
\end{proof}

\subsection{Proof of \cref{Thm:FinitenessNormalization}}

We first prove the finiteness statement in the theorem only for nondivisible subvarieties, and then deduce it in the general case.\smallskip

\emph{The nondivisible case.} The proof of the finiteness for \emph{nondivisible} subvarieties $X$ is similar to \cite[th. 2.3]{KM}, so we just point out how to use \cref{Lemma:InterestingPartHilbScheme} instead of \cite[lemma 2.1]{KM}. Consider the family of constructible subsets of $\Hilb_A$ given be \cref{Lemma:InterestingPartHilbScheme}. For any such constructible subset $Z \subset \Hilb_A$, let $\bar{Z}$ its Zariski closure in $\Hilb_\cA$ and $\cX_{\bar{Z}}$ the restriction to $\bar{Z}$ of the universal family $\cX \to \Hilb_\cA$. Let $\tilde{\cX}_{\bar{Z}}$ be the normalization of the scheme $\cX_{\bar{Z}}$ and consider the open subset $\cZ \subset \bar{Z}$ where the morphism $\tilde{\cX}_{\bar{Z}} \to \bar{Z}$ is smooth. The schemes $Z$ and $\cZ$ are constructed in such a way that the following property is satisfied: for a point in $\cZ(\cO_{K, \Sigma})$ extending a $K$-rational point of $Z$, the corresponding closed subscheme $\cX \subset \cA$ is such that the normalization $\tilde{\cX}$ is smooth over $\cO_{K, \Sigma}$. Let
\[ F \subset  \cZ(\cO_{K, \Sigma}) \cap H_{L, P}\]
be a subset and $S$ an irreducible component of the Zariski closure of $F$ in $Z$. By Noetherian induction, it suffices to show that there is a nonempty open~$S'_K \subset S_K$, a morphism $a\colon S'_K \to A_K$ and a subvariety $X \subset A_K$ such that \[\cX_{S'_K} = X + a.\]
Hence the proof of~\cite[th. 2.3]{KM} goes through without changes if we replace the big mondromy criterion from loc.~cit.~by its generalization in \cref{sec:BigMonodromy}.\medskip

\emph{The general case.} We pass now to the proof of the statement with no additional assumptions on the varieties in question. To reduce to the nondivisible case, we will mod out by the stabilizer and, to do so, we need to show that only finitely many subgroups of $A$ can occur as stabilizers of the varieties in the statement. Note that all the varieties in question are of general type, so their stabilizer are finite subgroups of $A$.
To bound their order, we may restrict ourselves to subvarieties of $A$ corresponding to $K$-rational points of a fixed constructible subset $Z \subset \Hilb_A$ given by \cref{Lemma:InterestingPartHilbScheme}. It follows from \cref{lemma:StabNorm} that, for any $z \in Z(K)$, the stabilizer of corresponding subvariety $X \subset A$ divides the topological Euler characteristic of the normalization $\tilde{X}$ of $X$. The topological Euler characteristic of the fibers of $\tilde{\cX}_Z \to Z$ is locally constant, thus takes only finitely many values. Therefore the set
\[ F := \{ \Stab_A(\cX_h) \mid z \in Z(K)\} \]
is finite. To apply the nondivisible case, we need to ensure that good reduction is preserved after having mod out by the stabilizer. To do this, we may enlarge $\Sigma$ and suppose that the prime divisors of $\chi_\top(\tilde{\cX}_z)$ for $z\in Z(K)$ lie in $\Sigma$. Under this additional assumption, the Zariski closure $\cG \subset \cA$ of any $G \in F$ is smooth over $\cO_{K, \Sigma}$. The abelian variety $A':= A/G$ is the generic fiber of the abelian scheme $\cA' := \cA / \cG$ and thus has good reduction. Since the set $F$ is finite it suffices to prove the statement for those subvarieties $X \subset A$ in the statement having a fixed $G \in F$ as stabilizer and corresponding to a point of $Z$. To do this, let $\cX \subset \cA$ be the Zariski closure of such $X$ and $\tilde{\cX}$ the normalization of $\cX$, which is smooth over $\cO_{K, \Sigma}$ by hypothesis. In particular, the Zariski closure $\cG \subset \cA$ of $G$ is the stabilizer of $\cX$. Since the construction of the normalization is compatible with smooth base change \cite[\href{https://stacks.math.columbia.edu/tag/081J}{Section 081J}]{stacks-project}, the action of $\cG$ on $\cX$ extends to the normalization $\tilde{\cX}$. Such an action being free, the quotient $\tilde{\cX}' := \tilde{\cX} / \cG$ is smooth over $\cO_{K, \Sigma}$ \cite[prop. 0.9]{GIT} and is the normalization of $\cX' := \cX / \cG$ by \cref{lemma:StabNorm} applied to  generic fibers. The subvariety $X':= X / G \subset A'$ is nondivisible, so the statement follows from the nondivisible case as soon as we know that the Hilbert polynomial of $X'$ for some polarization of $A'$ takes finitely many values; see \cite[end of 2.3]{KM}.
\qed

\section{Perverse sheaves and characteristic cycles} 

The rest of this paper will be concerned with Tannaka groups of perverse sheaves on abelian varieties. In this section we set up the general framework and recall a few facts about characteristic cycles that will be used in what follows.

\subsection{Setup} 
From now on $A$ will always denote an abelian variety of dimension~$g$ over an algebraically closed field $k$ of characteristic zero, and $X \subset A$ an integral subvariety of dimension $d$. We treat the algebraic and analytic framework on the same footing: in the former we use perverse sheaves for the \'etale topology on  $A$ with coefficients in $\bbF = \bar{\bbQ}_\ell$ for a prime $\ell$, in the latter we work over $k = \bbC$ and use perverse sheaves for the classical topology on $A(\bbC)$ with coefficients in $\bbF = \bbC$. We denote by
\[
 \delta_X \;\in\; \Perv(A, \bbF)
\]
the perverse intersection complex supported on $X\subset A$. By~\cite[prop.~3.1]{JKLM} the convolution powers of this intersection complex generate a neutral Tannaka category~$\langle \delta_X \rangle$. We fix a fiber functor
\[
 \omega\colon \quad \langle \delta_X \rangle \;\too\; \Vect(\bbF)
\]
and consider the associated Tannaka group $G_{X, \omega} := \Aut^\otimes(\omega) \subset \GL(\omega(\delta_X))$. This is a reductive group, and we denote the derived group of its connected component by
\[
 G_{X, \omega}^\ast \;:=\; [G_{X, \omega}^\circ, G_{X, \omega}^\circ].
\]

\subsection{Characteristic cycles} 
The above Tannaka groups are closely related to the conormal geometry of subvarieties. Recall that the \emph{conormal variety} to $X\subset A$ is defined as the Zariski closure in $T^\vee A=A\times \Lie(A)^\vee$ of the conormal bundle to the smooth locus of $X$. This is a conic Lagrangian subvariety of $T^\vee A$. We denote its projectivization by
\[
 \PLambda_X \; \subset \; A\times \bbP_A
 \quad \text{where} \quad 
 \bbP_A \;:=\; \bbP(\Lie(A)^\vee)
\]
and define the \emph{Gauss map} of the subvariety as the projection $\gamma_X\colon \PLambda_X \to \bbP_A$. In the context of perverse sheaves such conormal varieties arise naturally as irreducible components of characteristic cycles:

\medskip

Over $k=\bbC$ one may attach to any $P\in \Perv(A, \bbF)$ a \emph{characteristic cycle} $\CC(P)$, which is a finite formal sum of conormal varieties to the strata in a suitable Whitney stratification of the support $\Supp(P)$~\cite[def.~4.3.19]{DimcaSheaves}. We denote by $\cc(P)$ the cycle on $A\times \bbP_A$ which is obtained from the projectivization of the characteristic cycle by discarding any components with non-dominant Gauss map. If $X\subset A$ is a smooth subvariety of general type, then $\cc(\delta_X)=\PLambda_X$. We are interested in a class of subvarieties which may be singular but still satisfy this last condition. 

\medskip 

Over an arbitrary algebraically closed field $k$ of characteristic zero this leads to the following notion:

\begin{definition}
We say that a subvariety $X\subset A$ has \emph{integral characteristic cycle} if for any embedding $\sigma\colon k\hookrightarrow \bbC$, the associated complex subvariety $X_\sigma \subset A_\sigma$ has characteristic cycle
\[
 \cc(\delta_{X_\sigma}) \;=\; \PLambda_{X_\sigma}.
\]
\end{definition}

Note that this condition is independent of the chosen embedding $\sigma$: indeed, the characteristic cycle of the perverse intersection complex can be defined over~$k$ via Saito's theory for $\ell$-adic sheaves~\cite{SaitoCC}, and the base change of this cycle via $\sigma$ gives back the characteristic cycle in the above sense by~\cite[th.~1.2]{RaiComparison}. 

\medskip 

The reason for the above definition is that for any nondivisible subvariety $X\subset A$ with integral characteristic cycle, the Tannaka group $G_{X,\omega}^\ast$ acts via a minuscule representation on $\omega(\delta_X)$, see~\cite[cor.~1.10]{KraemerMicrolocalI}~\cite[cor.~5.15]{JKLM}. The main class of subvarieties with integral characteristic cycles to be considered in this paper will be the following:

\begin{definition}
We say that variety $X$ has \emph{smooth unramified normalization} if $\tilde{X}$ is smooth and the normalization morphism $\nu\colon \tilde{X} \to X$ is unramified. 
\end{definition} 

If this is the case and if $X\subset W$ is a subvariety of a smooth variety $W$, then the kernel
\[
 \cC_{\tilde{X}/W} \;=\; \ker\left(\nu^*(\Omega^1_W) \too \Omega^1_{\tilde{X}}\right)
\]
is a vector bundle whose associated projective bundle is the pullback of the conormal variety:
\[
 \bbP(\cC_{\tilde{X}/W}) \;=\; \tilde{X}\times_X \PLambda_X
\]
We say that $X\subset W$ has \emph{smooth unramified normalization with ample normal bundle} if it has smooth unramified normalization and the normal bundle $\cN_{\tilde{X}/W}=(\cC_{\tilde{X}/W})^\vee$ is ample. Let us now again take $W=A$ to be an abelian variety.

\begin{lemma} \label{Lem:IntegralCC}
For any subvariety $X\subset A$ with smooth unramified normalization, the following properties hold:\smallskip 
\begin{enumerate} 
\item $X$ has integral characteristic cycle and the morphism $\PLambda_X \to X$ has constant fiber dimension.\smallskip
\item $X$ is of general type if and only if the Gauss map $\gamma_X\colon \PLambda_X \to \bbP_A$ is dominant, in which case~$\gamma_X$ is generically finite of degree 
\[ \deg(\gamma_X) \;=\; \chi(\delta_X) \;=\; (-1)^d\, \chi_\top(\tilde{X})
\quad \text{where} \quad d \;=\; \dim X. \]
\item The following three properties are equivalent:\smallskip
\begin{itemize} 
\item The normal bundle $\cN_{\tilde{X}/ A}$ is ample.\smallskip
\item The Gauss map $\gamma_X \colon \PLambda_X \to \bbP_A$ is finite.\smallskip
\item For every $\omega \in \rH^0(A, \Omega^1_A)\setminus \{0\}$ the pull-back $\nu^\ast \omega$ has finite zero locus.\smallskip
\end{itemize}
\item If $\cN_{\tilde{X}/ A}$ is not ample, then there is a curve $C \subset X$ lying in a smaller abelian subvariety of $A$. In particular, if $A$ is simple, then $\cN_{\tilde{X}/ A}$ is ample.
\end{enumerate} 
\end{lemma}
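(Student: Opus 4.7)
The plan is to address the four items in turn, using the observation that for $X\subset A$ with smooth unramified normalization $\nu\colon \tilde{X}\to X$ the entire conormal geometry of $X$ is encoded by the vector bundle $\cC_{\tilde{X}/A}$ on $\tilde{X}$. For~(1), the rank of $\cC_{\tilde{X}/A}$ is $g-d$, so the identification $\bbP(\cC_{\tilde{X}/A})=\tilde{X}\times_X \PLambda_X$ combined with finiteness of $\nu$ gives constant fiber dimension $g-d-1$ for the morphism $\PLambda_X\to X$. For the integrality of the characteristic cycle I would first note that $\nu$ is finite, birational and unramified, so $\nu_\ast \delta_{\tilde{X}}$ is a simple perverse sheaf supported on $X$ whose restriction to the smooth locus is $\bbF_X[d]$, and hence coincides with $\delta_X$. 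Functoriality of $\CC$ under finite unramified morphisms then identifies $\CC(\delta_X)$ with the pushforward of the zero section of $T^\vee\tilde{X}$, whose projectivization is exactly $\PLambda_X$; no further components can appear. Passage to an arbitrary algebraically closed field of characteristic zero goes through Saito's $\ell$-adic construction of $\CC$ and its compatibility with complex base change.

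For~(2), the equality $\chi(\delta_X)=(-1)^d\chi_\top(\tilde{X})$ is immediate from $\delta_X=\nu_\ast\bbF_{\tilde{X}}[d]$ and the additivity of Euler characteristics under finite morphisms. The equality $\deg(\gamma_X)=\chi(\delta_X)$ is a standard microlocal computation: since by~(1) the only component of $\cc(\delta_X)$ is $\PLambda_X$, the generic-vanishing/index formula on $A$ expresses $\chi(\delta_X)$ as the number of sheets of $\gamma_X$ above a generic point of $\bbP_A$, and as $0$ if $\gamma_X$ is not dominant. To match dominance of $\gamma_X$ with $X$ being of general type, I would use a Ueno-type argument on $\tilde{X}$: non-dominance means that all tangent spaces of $\tilde{X}$ lie in a common proper linear subspace of $\Lie A$, which integrates to a fibration of $\tilde{X}$ by a positive-dimensional abelian subvariety and forces $\kappa(\tilde{X})<\dim \tilde{X}$; conversely dominance makes $\omega_{\tilde{X}}=\det\Omega^1_{\tilde{X}}$ big since $\Omega^1_{\tilde{X}}$ is a quotient of $\nu^\ast\Omega^1_A$ with full-rank generic structure.

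The three equivalences in~(3) amount to the standard ampleness criterion for a quotient vector bundle. The surjection $\rH^0(A,\Omega^1_A)\otimes\cO_{\tilde{X}}\twoheadrightarrow \cC_{\tilde{X}/A}$ realises the composite $\bbP(\cC_{\tilde{X}/A})\to \PLambda_X\to \bbP_A$ (where the second arrow is $\gamma_X$) as the morphism to projective space defined by the tautological line bundle $\cO_{\bbP(\cC_{\tilde{X}/A})}(1)$. Ampleness of $\cN_{\tilde{X}/A}=\cC_{\tilde{X}/A}^\vee$ is equivalent to ampleness of this tautological bundle, hence to finiteness of the tautological morphism, and since $\nu$ is finite this is in turn equivalent to finiteness of $\gamma_X$. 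The last equivalence is essentially tautological: the fiber of $\gamma_X$ over $[\omega]\in \bbP_A$ is set-theoretically the zero locus on $\tilde{X}$ of the image of $\nu^\ast\omega$ in $\Omega^1_{\tilde{X}}$.

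Finally for~(4), if $\cN_{\tilde{X}/A}$ is not ample then~(3) produces a nonzero $\omega\in \rH^0(A,\Omega^1_A)$ whose image in $\Omega^1_{\tilde{X}}$ has positive-dimensional zero locus; choose an irreducible curve $\tilde{C}$ in this locus and set $C=\nu(\tilde{C})\subset X$. All tangent directions of $C$ lie in the hyperplane $\ker\omega\subset \Lie A$, so the smallest abelian subvariety $B\subset A$ with $C$ contained in a translate of $B$ satisfies $\Lie B\subset \ker\omega$, hence $B\subsetneq A$. When $A$ is simple no such positive-dimensional $B$ exists, giving the final assertion by contraposition. The main technical obstacle in the lemma is~(1): although locally on $X$ the normalization presents each germ as a disjoint union of smooth germs, verifying that no singular stratum contributes to $\CC(\delta_X)$ requires some care, and a uniform treatment across the analytic and $\ell$-adic frameworks ultimately invokes Saito's microlocal theory and its comparison with the Whitney-stratification description over $\bbC$.
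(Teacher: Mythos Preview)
Your argument is correct and follows essentially the same route as the paper. The paper likewise writes $\delta_X=\nu_*\delta_{\tilde{X}}$ and for~(1) invokes Kashiwara's estimate for the characteristic variety of a direct image, making explicit the dimension count that turns the a~priori inclusion into an equality (unramifiedness of $\tilde{X}\to A$ forces $\varpi(\rho^{-1}(\text{zero section}))$ to be integral of dimension~$g$); for~(3) and~(4) it simply cites \cite[prop.~6.3.10]{LazarsfeldPositivityII}, noting that the proof there for closed embeddings carries over verbatim to unramified morphisms, whereas you spell the ampleness criterion out directly.
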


\begin{proof} 
We may assume $k=\bbC$.  For claim (1) we begin by showing that the characteristic variety 
\[ \Characteristic(\delta_X) \;:=\; \Supp(\CC(\delta_X)) \;\subset\; T^\vee A \]
is irreducible. Note that $\delta_X = \nu_*(\delta_{\tilde{X}})$ since the normalization $\nu\colon \tilde{X} \to X$ is a finite birational morphism. So we may apply Kashiwara's estimate for the characteristic variety of direct images~\cite[th.~4.2(b)]{KashiwaraBFunctionsAndHolonomic}: the morphism $f\colon \tilde{X} \to A$ induces a correspondence
\[
 T^\vee \tilde{X} \;\stackrel{\rho}{\longleftarrow} \; \tilde{X} \times_X T^\vee A \;\stackrel{\varpi}{\longrightarrow}\; T^\vee A
\]
where $\varpi=f\times \id$ and $\rho = df$, and with this notation
\[
 \Characteristic(\delta_X) \;=\; \Characteristic(\nu_*(\delta_{\tilde{X}})) \;\subset\; \varpi(\rho^{-1}(\Characteristic(\delta_{\tilde{X}})).
\]
Here $\Characteristic(\delta_{\tilde{X}})$ is the zero section $T^\vee \tilde{X}$. Since $f$ is unramified, the codifferential $\rho$ is smooth equidimensional with integral fibers of dimension $r=g-d$ where $d=\dim X$, so $\rho^{-1}(\Characteristic(\delta_{\tilde{X}}))$ is integral of dimension $g$. Since $\varpi$ is a finite morphism, it follows that $\varpi(\rho^{-1}(\Characteristic(\delta_{\tilde{X}}))$ is integral of dimension $g$. So the inclusion in Kashiwara's estimate is an equality since $\Char(\delta_X)$ is of pure dimension $g$. In particular, the characteristic cycle $\cc(\delta_X)$ coincides with the conormal variety $\PLambda_X$, which is in turn the projectivization
\[ \bbP(\cC_{\tilde{X}/A}) \; \subset \; A\times \bbP_A\]
of the conormal bundle $\cC_{\tilde{X}/A}$ of the unramified morphism $\tilde{X} \to A$. Hence the projection $\PLambda_X \to X$ is the composite morphism
\[ \bbP(\cC_{\tilde{X}/A}) \; \too \; \tilde{X} \; \too \; X,\]
so it has constant fiber dimension. This proves claim (1). Claim (2) then follows from the Kashiwara index formula together with the identity 
\[ \chi(\delta_X) \;=\; \chi(\nu_*(\delta_{\tilde{X}})) \;=\; \chi(\delta_{\tilde{X}}) \;=\; (-1)^d \chi_\top(\tilde{X}) \]
where the last equality uses that $\tilde{X}$ is smooth. The claims (3) and (4) are proven in \cite[prop. 6.3.10]{LazarsfeldPositivityII} in the special case where $f$ is a closed embedding, but the proof goes through verbatim when $f$ is only unramified.
\end{proof}

\subsection{Lower bounds for the Euler characteristic} Recall that the group~$G_{X,\omega}$ comes with a natural faithful irreducible representation $\omega(\delta_X)$ whose dimension is given by $\dim \omega(\delta_X)=\chi(\delta_X)=|\chi_\top(\tilde{X})|$. In~\cref{Subsec:ConclusionSpin} we will use the following lower bound on this topological Euler characteristic:

\begin{lemma} \label{Lem:EulerCharacteristicBound}
Let $X \subset A$ be an integral subvariety of dimension $d<g$ that has smooth unramified normalization with ample normal bundle. Then
\[ |\chi_\top(\tilde{X})| \;\ge\; g.
\]
\end{lemma}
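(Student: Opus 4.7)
The plan is to reduce via \cref{Lem:IntegralCC} to a lower bound on the degree of the Gauss map, and then to exploit the nondegeneracy forced by the ampleness of $\cN_{\tilde{X}/A}$.

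By \cref{Lem:IntegralCC}(2)--(3), under our hypotheses the Gauss map $\gamma_X\colon \PLambda_X \to \bbP_A \simeq \bbP^{g-1}$ is finite surjective of degree $(-1)^d\chi_\top(\tilde{X}) = |\chi_\top(\tilde{X})|$, so the assertion reduces to showing $\deg(\gamma_X) \ge g$.

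For the lower bound, I would factor $\gamma_X$ through the classifying morphism $\phi\colon \tilde{X} \to \Gr(g-d, g)$ sending $x$ to the conormal subspace $(T_x X)^\perp \subset V := (\Lie A)^\vee$. \Cref{Lem:IntegralCC}(4) combined with the ampleness of $\cN$ rules out positive-dimensional fibers of $\phi$: such a fiber would yield a curve with constant tangent direction, hence a curve in $X$ lying in a proper abelian subvariety. Therefore $\phi$ is finite. The Gauss map $\gamma_X$ is then the pullback along $\phi$ of the universal projection from the incidence variety $I = \{(W, [\omega]) : \omega \in W\} \subset \Gr(g-d, g)\times \bbP^{g-1}$, giving
\[
\deg(\gamma_X) \;=\; \deg(\phi)\cdot \textstyle\int_{\phi(\tilde{X})}\sigma_d,
\]
where $\sigma_d$ is the special Schubert class dual to the locus of subspaces containing a fixed line in $V$.

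The main difficulty is to show $\textstyle\int_{\phi(\tilde{X})}\sigma_d \ge g$. The nondegeneracy of $\tilde{X}$ in $A$ forces $\phi(\tilde{X})$ not to lie in any sub-Grassmannian of the form $\{W : L \subset W\}$ for a line $L \subset V$: this is exactly the assertion that for every nonzero $\omega \in V$ the pullback $\nu^*\omega$ is not identically zero, which is guaranteed by \cref{Lem:IntegralCC}(3). The hard step is to convert this qualitative nondegeneracy into the quantitative bound $g$. I expect this to come from a Schubert-theoretic inequality for nondegenerate $d$-dimensional subvarieties of $\Gr(g-d,g)$, analogous to but stronger than the minimum-degree theorem in projective space (which via the Pl\"ucker embedding would only yield $g-d$). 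The main obstacle is thus to package this bound cleanly so that it produces the sharp inequality by~$g$ rather than by a strictly smaller quantity.
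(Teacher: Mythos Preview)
Your reduction via \cref{Lem:IntegralCC} to the inequality $\deg(\gamma_X)\ge g$ is correct, as is the factorization $\deg(\gamma_X)=\deg(\phi)\cdot\int_{\phi(\tilde X)}\sigma_d$. The gap lies in the last step, which you flag as the ``main obstacle''; but it is not merely a packaging issue --- the inequality $\int_{\phi(\tilde X)}\sigma_d\ge g$ you aim for is \emph{false}. For $d=1$ one has $\Gr(g-1,g)\cong\bbP(V^\vee)$ and the classifying morphism $\phi$ is nothing but the canonical map of $\tilde X$. So if $\tilde X$ is a hyperelliptic curve of genus $g\ge 2$, Abel--Jacobi embedded in its Jacobian $A$ (the normal bundle is ample since the curve generates $A$), then $\phi$ is $2$-to-$1$ onto a rational normal curve of degree $g-1$, hence $\int_{\phi(\tilde X)}\sigma_1=g-1<g$. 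It is the factor $\deg(\phi)=2$ that rescues the inequality $\deg(\gamma_X)=2(g-1)\ge g$; passing to the Grassmannian image throws this factor away, and no nondegeneracy argument on $\phi(\tilde X)$ alone can recover it.

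The paper avoids this detour entirely. It identifies $|\chi_{\top}(\tilde X)|=\deg(\gamma_X)$ with the top Segre class of the normal bundle $\cN=\cN_{\tilde X/A}$, computed on $\tilde X$ itself rather than on its Grassmannian image. This bundle is ample by hypothesis, globally generated as a quotient of $(\Lie A)\otimes\cO_{\tilde X}$, and $\rH^1(\tilde X,\bbC)\neq 0$ because $\tilde X$ admits a nonconstant map to an abelian variety. With these three inputs the bound follows directly from a general inequality of Beltrametti--Schneider--Sommese for the top Segre class of an ample globally generated vector bundle on a variety with nonzero first Betti number. So the missing ingredient is not a new Schubert-theoretic estimate for subvarieties of Grassmannians, but this off-the-shelf positivity result applied on $\tilde X$.
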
 

\begin{proof}
We may assume that $k = \bbC$. The claim then follows from~\cite[th.~4]{BeltramettiSchneiderSommese} by identifying $\chi_\top(\tilde{X})$ with the top Segre class of the normal bundle,  since the normal bundle is ample, globally generated, and $\rH^1(X, \bbC)\neq 0$.
\end{proof} 

The above bound is clearly not sharp, as one already sees for curves. For surfaces we have the following bound:

\begin{lemma} \label{Lem:EulerCharacteristicBoundSurface}
Let $S$ be a smooth minimal surface of general type and $q=h^1(S, \cO_S)$ its irregularity. Then
\[ \chi_\top(S) \;\ge\; 3q-9. \]
\end{lemma}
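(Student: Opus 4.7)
The plan is to combine two classical inequalities for minimal surfaces of general type: the Bogomolov--Miyaoka--Yau inequality and Beauville's inequality $p_g \ge 2q - 4$. The argument is then essentially a two-line computation, so no genuine obstacle arises; the only point worth tracking is that both minimality and the general type hypothesis are needed.

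First, I would apply Noether's formula $12\chi(\cO_S) = K_S^2 + c_2(S)$ together with the Bogomolov--Miyaoka--Yau inequality $K_S^2 \le 3 c_2(S)$, which holds since $S$ is minimal of general type. Substituting the latter into the former yields
\[ 12\chi(\cO_S) \;=\; K_S^2 + c_2(S) \;\le\; 4 c_2(S),\]
so that $c_2(S) \ge 3\chi(\cO_S)$.

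Second, I would invoke Beauville's inequality $p_g \ge 2q - 4$, which holds for any minimal surface of general type. Combined with the identity $\chi(\cO_S) = 1 - q + p_g$, it translates into $\chi(\cO_S) \ge q - 3$. Assembling both bounds gives
\[ \chi_\top(S) \;=\; c_2(S) \;\ge\; 3\chi(\cO_S) \;\ge\; 3(q - 3) \;=\; 3q - 9,\]
which is the claim. For $q \le 3$ the bound is already trivial since $c_2(S) > 0$, so the interest lies in the regime $q \ge 4$, where Beauville's inequality is genuinely needed to extract a linear dependence on $q$.
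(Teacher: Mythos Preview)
Your proof is correct and follows essentially the same approach as the paper: combine Bogomolov--Miyaoka--Yau with Noether's formula to obtain $c_2 \ge 3\chi(\cO_S)$, then use $p_g \ge 2q - 4$ (which the paper attributes to Debarre rather than Beauville) to get $\chi(\cO_S) \ge q - 3$. The only addition is your final remark on the case $q\le 3$, which is fine but not needed.
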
 

\begin{proof} 
Since $S$ is of general type, the Chern classes $c_i=c_i(T_S)$ satisfy $c_1^2\le 3c_2$ by the Bogomolov-Miyaoka-Yau inequality. By Noether's formula this is equivalent to~$3 \chi \le c_2$ where $\chi=\chi(S, \cO_S)=1-q+p$ for $p = h^2(X, \cO_X)$. As~$S$ is minimal, we have $p \ge 2q - 4$ by~\cite[appendix]{DebarreNoether}. This is equivalent to $\chi \ge q - 3$. Combining these inequalities we obtain $c_2 \ge 3(q-3)$. 
\end{proof}

\section{Simplicity}
\label{sec:Simplicity}

In this section we prove a criterion for the simplicity of Tannaka groups that applies to a large class of singular subvarieties of abelian varieties, including all subvarieties that receive a finite dominant morphism from a smooth variety.

\subsection{Main result} 

We say that an integral variety $X$ has \emph{uniruled modifications} if for every proper birational morphism 
\[ f\colon \quad X'\;\too\; X \]
from a normal variety $X'$ all irreducible components of the exceptional locus of~$f$ are uniruled over $X$. When $X$ is normal, this is the definition in~\cite[VI.1.6]{KollarRationalCurves}. The class of varieties that have uniruled modifications is quite large: in particular, by \cite[chapt.~VI, th.~1.2 and (1.6.2.3)]{KollarRationalCurves} it includes all varieties which receive a finite dominant morphism from a smooth variety. More generally, Hacon and McKernan have shown that any normal variety with divisorially log terminal singularities has uniruled modifications~\cite[cor.~1.6]{HaconMcKernan07}. The main goal of this section is the following result:

\begin{theorem} \label{Thm:SimpleTannakaGroups}
Let $X\subset A$ be an integral nondivisible subvariety with uniruled modifications and dimension $d < g -1$. If every component of $\cc(\delta_X)$ has finite Gauss map, then the following are equivalent:
\begin{enumerate} 
\item $G_{X,\omega}^\ast$ is not simple.\smallskip
\item $X=X_1+X_2$ for positive dimensional  integral subvarieties $X_1, X_2\subset A$ such that the sum morphism induces an isomorphism
\[
 \tilde{X}_1 \times \tilde{X}_2
 \;\stackrel{\sim}{\longrightarrow}\; \tilde{X}.
\]
\end{enumerate}
Moreover, all subvarieties $X_1, X_2\subset A$ with the property (2) have finite Gauss map.
\end{theorem}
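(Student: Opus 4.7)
The plan is to treat the two implications separately, with most of the work going into the nontrivial direction $(1)\Rightarrow (2)$. For the easier direction $(2)\Rightarrow (1)$, I would start from the hypothesis that the sum morphism $\sigma\colon X_1\times X_2 \to X$ induces an isomorphism of normalisations; this makes $\sigma$ finite birational, so proper base change identifies $\delta_{X_1}\ast \delta_{X_2} \cong R\sigma_!\, \delta_{X_1\times X_2} \cong \delta_X$ as perverse sheaves on $A$. Applying the fibre functor $\omega$ yields a tensor decomposition $\omega(\delta_X) \cong \omega(\delta_{X_1})\otimes \omega(\delta_{X_2})$, and combining positive-dimensionality of the $X_i$ with the final claim of the theorem (that each $X_i$ has finite Gauss map and is therefore of general type, with $|\chi_\top(\tilde{X}_i)|>1$) shows that both tensor factors have dimension strictly greater than one. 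This forces $G_{X,\omega}^\ast$ to sit properly inside a product of two positive-dimensional reductive groups and thus to fail simplicity.

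For the converse direction the strategy is Tannakian: translate non-simplicity of $G_{X,\omega}^\ast$ into a geometric convolution decomposition of $\delta_X$. The starting point is that the assumption on $\cc(\delta_X)$ says $X$ has integral characteristic cycle, so by \cite[cor.~5.15]{JKLM} the natural representation $V:=\omega(\delta_X)$ is minuscule for $G_{X,\omega}^\ast$. A minuscule representation of a non-simple connected semisimple group is forced to decompose as an external tensor product $V\cong V_1\boxtimes V_2$ of minuscule representations of simple factors, by a standard representation-theoretic input. Tannakian reconstruction then upgrades this to a convolution decomposition $\delta_X\cong P_1\ast P_2$ with irreducible factors $P_i\in \langle \delta_X\rangle$.

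The geometric identification step comes next. An irreducible perverse sheaf on $A$ is an intersection complex of an integral subvariety, possibly twisted by a rank-one character local system; using the nondivisibility of $X$ together with the support constraint $X=\Supp(\delta_X)\subset \Supp(P_1)+\Supp(P_2)$, I would rule out the character twist and conclude that $P_i\cong \delta_{X_i}$ for positive-dimensional integral subvarieties $X_i\subset A$ with $X_1+X_2=X$. Perversity of the convolution then forces the sum morphism $\sigma\colon X_1\times X_2 \to X$ to be generically finite, while a Kashiwara-type formula for the characteristic cycle of $\delta_{X_1}\ast \delta_{X_2}$ shows that every component of $\cc(\delta_{X_i})$ contributes a component to $\cc(\delta_X)$, each of which has finite Gauss map by hypothesis; this yields the last sentence of the theorem.

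The main obstacle will be the final step: upgrading $\sigma$ from generically finite to finite birational inducing an isomorphism $\tilde{X}_1\times \tilde{X}_2 \stackrel{\sim}{\to} \tilde{X}$. This is where the \emph{uniruled modifications} hypothesis on $X$ enters, in conjunction with the absence of rational curves on $A$: any exceptional component of the morphism $\tilde{X}_1\times \tilde{X}_2 \to \tilde{X}$ would have to be uniruled, and its image in $A$ would then collapse to a lower-dimensional subvariety, giving a contradiction via a dimension count against the constraints imposed by the nondivisibility of $X$ and the birationality of the sum map. Combined with the fact that in characteristic zero a product of normal varieties is normal, Zariski's main theorem then delivers the desired isomorphism of normalisations. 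Controlling this descent from an abstract Tannakian decomposition to a genuine geometric decomposition at the level of normalisations, against the possible singularities of $X$, is the technical heart of the argument.
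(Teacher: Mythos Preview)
Your overall architecture is right, and the final step with uniruled modifications is exactly what the paper does: any positive-dimensional fiber of $\tilde{\sigma}\colon \tilde{X}_1\times\tilde{X}_2\to\tilde{X}$ would be uniruled, but $\tilde{X}_1\times\tilde{X}_2$ maps finitely to $A\times A$, so $\tilde{\sigma}$ is finite birational between normal varieties and hence an isomorphism. However, several of the earlier steps contain real gaps, and the paper in fact takes a different route to avoid them.

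First, a factual slip: the hypothesis is only that every component of $\cc(\delta_X)$ has finite Gauss map, not that $\cc(\delta_X)$ is integral. So you cannot invoke the minuscule property of $\omega(\delta_X)$ at this level of generality, and the representation-theoretic tensor splitting you sketch for $(1)\Rightarrow(2)$ is not available as stated.

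More importantly, your Tannakian reconstruction step is where the argument breaks. The factors $V_1,V_2$ you produce are representations of $G_{X,\omega}^\ast$, not of $G_{X,\omega}$; hence they need not correspond to objects of $\langle\delta_X\rangle$, and there is no direct way to materialise them as simple perverse sheaves $P_i$ on $A$ with $\delta_X\simeq P_1\ast P_2$. Ruling out local system twists by ``nondivisibility plus support'' is likewise not a proof. The paper bypasses all of this by working not with perverse sheaves but with the ring of clean Lagrangian cycles: via \cite[prop.~6.10]{JKLM} one obtains cycles $\PLambda_1,\PLambda_2\in\langle\cc(\delta_X)\rangle$ of Gauss degree $>1$ and an integer $n\ge 1$ with $\PLambda_1\circ\PLambda_2=\cc(\delta_{[n](X)})$. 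Selecting components $\PLambda_{Y_i}$, the finiteness of the Gauss maps together with $d<g-1$ and \cite[cor.~5.7]{JKLM} gives $\dim Y=\dim Y_1+\dim Y_2$, whence a birational sum $Y_1\times Y_2\to Y=[n](X)$; one then pulls back through $[n]$ using nondivisibility of $X$ to get $X_1,X_2$. The cycle calculus kills exactly the torus and component-group ambiguities that obstruct your perverse-sheaf approach.

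For $(2)\Rightarrow(1)$ your sketch is also incomplete: from $\delta_X\simeq\delta_{X_1}\ast\delta_{X_2}$ you get $\omega(\delta_X)\simeq\omega(\delta_{X_1})\otimes\omega(\delta_{X_2})$ as representations of the larger group $H=G_\omega(\delta_{X_1}\oplus\delta_{X_2})$, not of $G_{X,\omega}$. The paper shows (using the finite Gauss map hypothesis and \cite{KraemerThetaSummands}) that the surjection $H\twoheadrightarrow G_{X,\omega}$ is an isogeny, and then invokes Rajan's uniqueness theorem for tensor factorisations over simple groups to conclude non-simplicity. The finiteness of the Gauss maps of the $X_i$ is read off from $[n]_\ast\cc(\delta_{X_i})\in\langle\cc(\delta_X)\rangle$, not from a Kashiwara formula.
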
 

Before we come to the proof, which will take up the rest of this section, let us note that the theorem in particular applies to subvarieties $X\subset A$ which have smooth unramified normalization with ample normal bundle: for such subvarieties the characteristic cycle $\cc(\delta_X)$ is integral with finite Gauss map by~\cref{Lem:IntegralCC}. In such a situation also the summands $X_1, X_2\subset A$ have smooth unramified normalization with ample normal bundle:

\begin{corollary} \label{Cor:Product}
Let $X\subset A$ be an integral nondivisible subvariety  which has smooth unramified normalization with ample normal bundle and dimension $d<g-1$. For any decomposition $X=X_1 + X_2$ with integral subvarieties $X_i \subset A$ such that the sum map induces an isomorphism
\[
 \tilde{X}_1 \times \tilde{X}_2 \;\stackrel{\sim}{\longrightarrow}\; \tilde{X},
\]
both $X_i\subset A$ have smooth unramified normalization with ample normal bundle. 
\end{corollary}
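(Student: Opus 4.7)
The plan is to first show that each normalization $\tilde{X}_i$ is smooth and that each normalization morphism $\nu_i\colon \tilde{X}_i\to X_i$ is unramified, and then to invoke the ``moreover'' clause of~\cref{Thm:SimpleTannakaGroups} to deduce that the Gauss map $\gamma_{X_i}$ is finite; by~\cref{Lem:IntegralCC}(3) the latter is equivalent to ampleness of $\cN_{\tilde{X}_i/A}$, which is the desired conclusion.

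For the first step I would note that $\tilde{X}_i$ is smooth as a factor of the smooth product $\tilde{X}\simeq \tilde{X}_1\times \tilde{X}_2$. Under this identification the composite morphism $f\colon \tilde{X}\to X\hookrightarrow A$ (which is unramified by hypothesis) becomes the sum map $(y_1,y_2)\mapsto \nu_1(y_1)+\nu_2(y_2)$, whose differential at every point is $(u_1,u_2)\mapsto d\nu_1(u_1)+d\nu_2(u_2)$. Injectivity of this differential forces injectivity of each $d\nu_i$ (restrict to vectors of the form $(u_1,0)$ or $(0,u_2)$), so each composite $\tilde{X}_i\to X_i\hookrightarrow A$ is unramified. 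Since $X_i\hookrightarrow A$ is a closed immersion, the relative cotangent sequence gives $\Omega^1_{\tilde{X}_i/X_i}=0$, i.e., $\nu_i$ is unramified, and hence each $X_i$ has smooth unramified normalization.

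For the second step I would verify the hypotheses of~\cref{Thm:SimpleTannakaGroups}. The variety $X$ is integral, nondivisible, and of dimension $d<g-1$ by assumption, and it has uniruled modifications since it receives the finite dominant normalization morphism from the smooth variety $\tilde{X}$. By~\cref{Lem:IntegralCC}(1) its characteristic cycle equals $\PLambda_X$, and by~\cref{Lem:IntegralCC}(3) the associated Gauss map is finite since $\cN_{\tilde{X}/A}$ is ample. The ``moreover'' part of~\cref{Thm:SimpleTannakaGroups} then yields that both $X_1$ and $X_2$ have finite Gauss map; combined with the smooth unramified normalization established above,~\cref{Lem:IntegralCC}(3) gives ampleness of $\cN_{\tilde{X}_i/A}$. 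The main work of the argument lies in~\cref{Thm:SimpleTannakaGroups} itself; beyond the small differential computation at the start, I do not foresee any serious obstacle.
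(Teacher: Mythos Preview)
Your proof is correct and follows essentially the same route as the paper: first observe that each $X_i$ inherits smooth unramified normalization from the product decomposition of $\tilde{X}$, then invoke the ``moreover'' clause of \cref{Thm:SimpleTannakaGroups} (after checking its hypotheses via \cref{Lem:IntegralCC}) to get finiteness of the Gauss maps $\gamma_{X_i}$, and finally use \cref{Lem:IntegralCC}(3) to convert this into ampleness of the normal bundles. The paper's proof is terser---it simply asserts the first step---whereas you spell out the differential argument; this is a welcome clarification but not a different idea.
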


Indeed, if $X$ has smooth unramified normalization, then so do both $X_i$. If $\tilde{X}\to A$ moreover has ample normal bundle, then $X$ has finite Gauss map by~\cref{Lem:IntegralCC}. In this case the last sentence in~\cref{Thm:SimpleTannakaGroups} shows that the Gauss maps of both $X_i$ are finite, hence $\tilde{X}_i \to A$ has ample normal bundle as claimed.

\subsection{A necessary criterion for simplicity} In the proof of~\cref{Thm:SimpleTannakaGroups}, we will apply the following criterion to show the implication (2) $\Rightarrow$ (1) and the finiteness of the Gauss map for the summands:

\begin{lemma} \label{Lem:ProductDecomposition}
Let $X\subset A$ be an integral subvariety such that every component of~$\cc(\delta_X)$ has finite Gauss map. Suppose there are two subvarieties $X_1, X_2\subset A$ of positive dimension such that $\delta_X \simeq \delta_{X_1} * \delta_{X_2}$. Then\smallskip
\begin{enumerate} 
\item the group $G_{X,\omega}^\ast$ is not simple, and \smallskip
\item the subvarieties $X_1, X_2\subset A$ have finite Gauss map.
\end{enumerate}
\end{lemma}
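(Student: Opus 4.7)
The plan is to treat the two conclusions separately using different techniques.

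For claim (2), I would exploit functoriality of characteristic cycles under the smooth sum morphism $\sigma\colon A\times A\to A$, whose codifferential is the diagonal $\omega\mapsto(\omega,\omega)$ on cotangent vectors. Since convolution is direct image along $\sigma$, applying Kashiwara's pushforward estimate to the external product $\delta_{X_1}\boxtimes\delta_{X_2}$ gives
\[
 \Char(\delta_{X_1}*\delta_{X_2}) \;\subset\; \bigl\{(a_1+a_2,\omega) : (a_i,\omega)\in\Char(\delta_{X_i})\bigr\}.
\]
Projectively, the fiber of $\gamma_X$ above a direction $[\omega]\in\bbP_A$ is therefore contained, via the group law, in the Minkowski sum $\gamma_{X_1}^{-1}([\omega])+\gamma_{X_2}^{-1}([\omega])\subset A$. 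The hypothesis that every component of $\cc(\delta_X)=\cc(\delta_{X_1}*\delta_{X_2})$ has finite Gauss map forces these left-hand fibers to be finite, which forces each of the two summands on the right to be finite as well. Since $\PLambda_{X_i}\to\bbP_A$ is proper, quasi-finiteness upgrades to finiteness, yielding claim (2).

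For claim (1), I would use Tannakian duality. Let $\cC=\langle\delta_X,\delta_{X_1},\delta_{X_2}\rangle$ be the Tannaka subcategory generated by the three perverse sheaves and let $H$ be its Tannaka group. Restriction to the subcategories $\langle\delta_X\rangle$ and $\langle\delta_{X_i}\rangle$ yields surjections $H\twoheadrightarrow G_{X,\omega}$ and $H\twoheadrightarrow G_i:=G_{X_i,\omega}$; the latter pair combine into a closed embedding $H\hookrightarrow G_1\times G_2$ with surjective projections. Applying $\omega$ to the convolution isomorphism $\delta_X\simeq\delta_{X_1}*\delta_{X_2}$ gives an $H$-equivariant isomorphism $V:=\omega(\delta_X)\simeq V_1\otimes V_2$ with $V_i:=\omega(\delta_{X_i})$. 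Thus the $H$-action on $V$ factors through the tensor-product homomorphism $\GL(V_1)\times\GL(V_2)\to\GL(V_1\otimes V_2)$, and $G_{X,\omega}$ is identified with the image of $H\subset G_1\times G_2$ under this map. Consequently $G_{X,\omega}^*$ is a quotient (by at most a central $\bbG_m$) of a closed subgroup of $G_1^*\times G_2^*$ which surjects onto each factor. Once both $G_i^*$ are known to be nontrivial, $G_{X,\omega}^*$ possesses two nontrivial semisimple quotients and so cannot be simple.

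The main obstacle is to rule out the degenerate possibility that some $G_i^*$ is trivial, equivalently $\dim V_i\leq 1$, equivalently $|\chi(\delta_{X_i})|\leq 1$. By claim (2) combined with \cref{Lem:IntegralCC}(2), $|\chi(\delta_{X_i})|=\deg\gamma_{X_i}$ is a positive integer, so the only problematic value is $1$: it would make the Gauss map of $X_i$ birational for a subvariety of positive dimension. This is a very rigid configuration that I would exclude by arguing that a birational Gauss map on a positive-dimensional subvariety of an abelian variety forces this subvariety to be a translate of a proper abelian subvariety (whose Gauss map then has positive-dimensional fibers, contradicting finiteness obtained in claim (2)). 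This last verification is the most delicate point, since a degree-one Gauss map is precisely the Tannakian incarnation of an \emph{invisible} tensor factor, and the argument must use the positivity of $\dim X_i$ together with the ambient abelian-variety structure in an essential way.
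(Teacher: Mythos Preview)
Your argument for (2) has a directional error. Kashiwara's pushforward estimate gives only the \emph{containment}
\[
 \Char(\delta_X)\;\subset\;\bigl\{(a_1+a_2,\omega):(a_i,\omega)\in\Char(\delta_{X_i})\bigr\},
\]
so knowing that the left-hand side has finite fibers over $\bbP_A$ says nothing about the fibers $\gamma_{X_i}^{-1}([\omega])$ on the right. (Had you an equality, the step would work, since for nonempty closed $S_1,S_2\subset A$ the sum $S_1+S_2$ is finite only if both $S_i$ are.) The paper takes a different route: after an isogeny, the cycle $\cc(\delta_{X_i})$ lands in the subring $\langle\cc(\delta_X)\rangle$ generated by $\cc(\delta_X)$ under convolution of clean cycles, and every element of that subring inherits finite Gauss map from the generator.

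For (1), your setup is reasonable but diverges from the paper and leaves two gaps. First, ``$G_{X,\omega}^*$ has two nontrivial semisimple quotients'' does not by itself contradict simplicity: a simple group surjects onto itself. What you actually need is that if $H^*$ is simple and surjects onto both $G_i^*$, then by Goursat it sits diagonally in $G_1^*\times G_2^*$, whence the diagonal action on $V_1\otimes V_2$ is not irreducible---but you only know irreducibility as an $H$-module, not as an $H^*$-module. The paper avoids this by first translating so that $H$ is semisimple (hence $H=H^*$), then using the finite-Gauss-map hypothesis to prove that $H\to G_{X,\omega}$ is an \emph{isogeny}, and finally invoking Rajan's theorem that an irreducible representation of a simple group is never a tensor product of two nontrivial irreducibles. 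Note that in your sketch the finite-Gauss-map hypothesis plays no role in (1); in the paper it is precisely what forces the isogeny.

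Second, your route to $\dim V_i>1$ via $\deg\gamma_{X_i}$ invokes \cref{Lem:IntegralCC}(2), which requires smooth unramified normalization---not assumed for $X_i$. The nontriviality is in fact easier than you fear: if $\dim\omega(\delta_{X_i})=1$ then $\delta_{X_i}$ is invertible modulo negligibles, hence equivalent to a skyscraper, contradicting $\dim X_i>0$; and $\dim\omega(\delta_{X_i})=0$ would force $\chi(\delta_X)=0$.
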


\begin{proof}
Replacing all the subvarieties by their image under the isogeny $[n]\colon A\to A$ for some $n\in \bbN$, we may assume that $X$ is nondivisible; moreover, replacing all the subvarieties by suitable translates we may assume that $H:=G_\omega(\delta_{X_1}\oplus \delta_{X_2})$ is a semisimple group~\cite[lemma~5.3.1]{KraemerMicrolocalII}. Since by assumption every component of~$\cc(\delta_X)$ has finite Gauss map, it then follows by~\cite[lemma~2.3]{KraemerThetaSummands} that the natural morphism 
\[
 H \;\twoheadrightarrow \; G_{X,\omega}
\] 
is an isogeny. Hence if $G_{X,\omega}^\ast$ were a simple algebraic group, then $H^*$ would be as well. But in 
\[
 \omega(\delta_X) \;\simeq\; \omega(\delta_{X_1})\otimes \omega(\delta_{X_2})
\]
the left hand side is a nontrivial irreducible representation of $H$ while the right hand side is a tensor product of \emph{two} nontrivial irreducible representations. This contradicts the uniqueness of the number of irreducible factors in a tensor product of representations of simple algebraic groups~\cite{Rajan}, thus proving (1). Claim (2) follows from the fact that 
\[
 [n]_* \cc(\delta_{X_i}) \;\in\; \bigl\langle \cc(\delta_X) \bigr\rangle
\]
for some $n\in \bbN$ by~\cite[prop.~2.5, lemma 2.6 and ex.~2.3]{KraemerThetaSummands}.
\end{proof}

\subsection{Proof of~\cref{Thm:SimpleTannakaGroups}}
(2) $\Rightarrow$ (1): If $X = X_1 + X_2$ where $X_1, X_2\subset A$ are integral subvarieties of positive dimension such that the sum morphism induces an isomorphism $\tilde{X}_1 \times \tilde{X}_2 \to \tilde{X}$,
then the sum morphism $X_1 \times X_2 \to X$ is finite and birational. It follows that
\[
 \delta_X \;\simeq \; \delta_{X_1} * \delta_{X_2}
\]
and~\cref{Lem:ProductDecomposition} shows that in this situation the Tannaka group $G_{X,\omega}^\ast$ is not simple and the subvarieties $X_1, X_2 \subset A$ again have finite Gauss map.

\medskip

(1) $\Rightarrow$ (2): If $G_{X,\omega}^\ast$ is not simple, then as in the proof of~\cite[prop.~6.10]{JKLM} there exist clean effective cycles $\PLambda_1, \PLambda_2 \in \langle \cc(\delta_X)\rangle$ of Gauss degree $\deg(\PLambda_i)>1$ and an integer $n\ge 1$ such that
\[
 \PLambda_1 \circ \PLambda_2 
 \;=\; \cc(\delta_Y)
 \quad \text{for} \quad Y \;=\; [n](X),
\]
where $\circ$ denotes the convolution product of clean cyclesn~\cite[def.~5.2]{JKLM}. The cycle on the right has the form $\cc(\delta_Y) = \PLambda_Y + R$ where $R$ is either zero or an effective clean cycle which is supported over a strict subvariety of $Y \subset A$. Since the convolution of effective clean cycles is effective, in the above product decomposition there must be components $\PLambda_{Y_i} \subset \Supp(\PLambda_i)$ over integral subvarieties $Y_i \subset A$ such that
\[
 \PLambda_{Y_1} \circ \PLambda_{Y_2} \;=\; \PLambda_{Y} + \cdots
\]
where $\cdots$ is a sum of components of $R$. By assumption all components of $\cc(\delta_X)$ have finite Gauss map, so for $i=1,2$ the Gauss map of $\PLambda_{Y_i} \in \langle \cc(\delta_X)\rangle$ is finite. Since $\dim X < g-1$, it follows that
\[
 \dim Y \;=\; \dim Y_1 + \dim Y_2
\]
by~\cite[cor.~5.7]{JKLM}. The product decomposition $\PLambda_{Y_1} \circ \PLambda_{Y_2} = \PLambda_{Y} + \cdots$ then shows that the sum morphism induces a birational morphism $Y_1\times Y_2 \to Y$. Note that $\dim Y_i > 0$ for both $i=1,2$: if not, then by irreducibility we could assume that $Y_1 = \{p\}$ is a point, in which case $Y_2 = Y - p$ by the product decomposition. But since 
\[
 \PLambda_1 \circ \PLambda_{Y_2} \;\subset\; \PLambda_1 \circ \PLambda_2 \;=\; \PLambda_Y + R,
\]
this could happen only if $\PLambda_1 = \PLambda_{\{p\}}$ which is impossible since $\deg(\PLambda_1)>1$.

\medskip 

It remains to deduce from the above decomposition of $Y$ a similar decomposition of $X$. Since $X\subset A$ is nondivisible, the morphism $[n]\colon X \to Y$ is birational. Hence if we pick irreducible components $X_i\subset A$ of the preimage $[n]^{-1}(X_i)$ for $i=1,2$ such that $X=X_1+X_2$, then the sum morphism $X_1 \times X_2 \to X$ is birational. By passing to normalizations we obtain a birational morphism
\[
 \tilde{\sigma}\colon \quad \tilde{X}_1 \times \tilde{X}_2 \;\longrightarrow\; \tilde{X}.
\]
Since $\tilde{X}$ has uniruled modifications, any positive-dimensional fiber of $\tilde{\sigma}$ would have to be uniruled. But $\tilde{X}_1 \times \tilde{X}_2$ admits a finite morphism to an abelian variety $A\times A$ and hence it cannot contain any uniruled subvariety of positive dimension. Thus the morphism $\tilde{\sigma}$ is finite, and the claim follows since any finite birational morphism between normal varieties is an isomorphism.   \qed

\section{Wedge powers} \label{sec:WedgePowers}

We now focus on subvarieties that have smooth unramified normalization with ample normal bundle. In this section we show that for such subvarieties of high enough codimension, wedge powers of the standard representation of $\SL_n(\bbF)$ occur only if the normalization is a symmetric power of a curve.

\subsection{Main result}
Let $r\ge 1$ be an integer. We say that a subvariety $X\subset A$ is an~\emph{$r$-th wedge power} if there exists an integer $n\ge r$ such that the Tannaka group is
\[
 G_{X, \omega}^\ast 
 \simeq \Alt^r(\SL_n(\bbF))
 \quad \textnormal{with the standard action on} \quad
 \omega(\delta_X)
 \simeq \Alt^r(\bbF^n).
\]
If $r$ is given, then $n$ is determined by the Euler characteristic of the intersection cohomology since
\[
\chi(\delta_X) \;=\; \dim_\bbF(\omega(\delta_X)) \;=\; \tbinom{n}{r}. \medskip
\]
By duality it suffices to discuss wedge powers for $r\le n/2$. In this case we have:

\begin{theorem} \label{Thm:SmallWedgePowersAreSumsOfCurves}
Let $X\subset A$ be an integral nondivisible subvariety which has smooth unramified normalization with ample normal bundle and dimension $d<(g-1)/2$. Suppose that~$\chi(\delta_X)=\binom{n}{r}$ with $1 < r \le n/2$. Then the following are equivalent: \medskip
\begin{enumerate} 
\item $X\subset A$ is an $r$-th wedge power.\smallskip
\item $r = d$ and there is an irreducible curve $C\subset A$ such that\smallskip 
\begin{itemize}
\item $X=C+\cdots + C \subset A$ is the sum of $d$ copies of $C$, and\smallskip 
\item the sum map induces on normalizations an isomorphism $\Sym^d \tilde{C} \to \tilde{X}$.\medskip
\end{itemize}
\end{enumerate} 
Any such $C\subset A$ has smooth unramified normalization with ample normal bundle.
\end{theorem}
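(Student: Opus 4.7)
The plan for (2)$\Rightarrow$(1) is to transport the Tannakian data of the curve to $X$. Under the hypotheses of (2), I would first check that the ample-normal-bundle and smooth-unramified-normalization properties of $\tilde X \to A$ descend to $\tilde C \to A$, by pulling back the conormal sequence along the symmetric-power map: the kernel $\cC_{\Sym^d \tilde C / A}$ is built from $d$ copies of $\cC_{\tilde C/A}$, so its ampleness forces the ampleness of $\cC_{\tilde C/A}^\vee$. Next, by the Tannakian analysis of curves in abelian varieties in the style of~\cite{JKLM}, the nondivisible curve $C$ will have $G^\ast_{C,\omega} \simeq \SL_n$ acting by the standard representation on $\omega(\delta_C) \simeq \bbF^n$, with $n=|\chi_\top(\tilde C)|$. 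Finally, the birational identification $\Sym^d \tilde C \simeq \tilde X$ exhibits $\delta_X$ as the $\bbS_d$-symmetric direct summand of $\delta_C^{*d}$; because curves sit in odd perverse shift, the super-commutative braiding converts the $\bbS_d$-invariant part into the \emph{alternating} summand on the Tannakian side, yielding $\omega(\delta_X) \simeq \Alt^d(\bbF^n)$ and hence $G^\ast_{X,\omega} \simeq \Alt^d(\SL_n)$ with $r=d$.

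For the substantive direction (1)$\Rightarrow$(2), I would first invoke \cref{Thm:SimpleTannakaGroups}: since $\Alt^r(\SL_n)$ is simple, $X$ cannot decompose as a Minkowski sum of positive-dimensional integral subvarieties inducing a product of normalizations. The key next step is to lift the Tannakian structure from $\Alt^r(\SL_n)$ to the covering group $\SL_n$. Since the kernel of $\SL_n \to \Alt^r(\SL_n)$ is $\mu_{\gcd(r,n)}$, the Tannakian recipe matches such a central extension geometrically with an isogeny of the ambient abelian variety. Applying the Schur--Weyl-type formalism for perverse sheaves on abelian varieties along the lines of~\cite{JKLM}, I would produce an isogeny $A' \to A$, a nondivisible integral subvariety $C' \subset A'$, and a semisimple perverse sheaf $Q$ on $A'$ supported on $C'$, such that $Q$ realizes the standard representation of $\SL_n$ and the descent to $A$ of the $r$-fold alternating convolution of $Q$ recovers $\delta_X$.

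The third step would be to force $\dim C' = 1$. By~\cref{Lem:IntegralCC} the cycle $\cc(\delta_X)=\PLambda_X$ is integral with finite Gauss map, so via the convolution of clean cycles (\cite[\S 5]{JKLM}) I expect $\cc(Q)$ to be a conormal cycle $\PLambda_{C'}$ with finite Gauss map. Writing $e=\dim C'$, the dimension identity $\dim X = re$ and the Euler-characteristic identity $\binom{n}{r}=\chi(\delta_X)$, expressed as the Gauss degree of the $r$-fold alternating convolution of $\PLambda_{C'}$, should together with the codimension assumption $d<(g-1)/2$ and the already-excluded product structure leave $e=1$ as the only option. Descending the support relation to $A$ then yields $X=C+\cdots+C$ with $d$ summands, with the sum map inducing an isomorphism $\Sym^d \tilde C \simeq \tilde X$, where $C$ is the image of $C'$ in $A$. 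In particular $r=d$. Finally, the fact that $C$ inherits smoothness of normalization, unramifiedness, and ampleness of the normal bundle from $\tilde X$ will follow by pulling back the decomposition of $\cC_{\tilde X / A}$ under $\Sym^d \tilde C \simeq \tilde X$, exactly as in the argument following~\cref{Cor:Product}.

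The hard part will be the second step: producing $Q$ as a genuine perverse sheaf on an explicit isogenous cover of $A$, not merely as an abstract Tannakian object, and ensuring that its alternating convolution really descends to $\delta_X$ on~$A$ rather than to some twist. I expect the hypothesis $d<(g-1)/2$ to enter both here, to ensure that the relevant dimension inequalities for convolutions of clean cycles (\cite[cor.~5.7]{JKLM}) are equalities, and in the preceding step, where it prevents the support of $Q$ from being higher-dimensional and keeps the Gauss map of $\PLambda_{C'}$ finite on the components seen by $\cc(\delta_X)$.
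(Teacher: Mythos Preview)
Your plan for (1)$\Rightarrow$(2) diverges from the paper's approach at the crucial step, and the divergence creates real difficulties.

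The paper never produces a perverse sheaf $Q$ realizing the standard representation. Instead, everything happens at the level of \emph{characteristic cycles}: \cref{prop:cc-for-wedge-power} (which is \cite[th.~7.4]{JKLM}, valid here since $X$ has integral characteristic cycle) directly gives an integer $e\ge 1$ and an integral subvariety $Z\subset A$ with $\PLambda_Z\in\langle\PLambda_X\rangle$ such that $\Alt^r\PLambda_Z=\PLambda_{[e](X)}$. This sidesteps the ``hard part'' you identified: one does not need to realize the standard representation as a perverse sheaf, only its characteristic cycle as a clean cycle, and the ring of clean cycles is much better behaved for this purpose. From there the codimension hypothesis $d<(g-1)/2$ feeds into the dimension bound of \cite[th.~7.5]{JKLM} to give $r\dim Z<g$, and then \cref{prop:sum-for-wedge-power} shows the sum map $\Sym^r Z\to [e](X)$ is finite birational; pulling back along $[e]$ yields a finite birational $\Sym^r C\to X$, hence an isomorphism $\Sym^r\tilde C\simeq\tilde X$ on normalizations.

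Your argument for forcing $\dim C'=1$ is the second gap. The Euler-characteristic identity $\chi(\delta_X)=\binom{n}{r}$ holds regardless of $\dim C'$ and gives no constraint; invoking \cref{Thm:SimpleTannakaGroups} to rule out products is superfluous and does not touch symmetric powers. The actual reason in the paper is much simpler: once $\Sym^r\tilde C\simeq\tilde X$, the smoothness of $\tilde X$ forces $\tilde C$ to be a curve, since the $r$-th symmetric power of a smooth variety of dimension $\ge 2$ is singular along the diagonal. This is where $r=d$ comes out, and unramifiedness of $\tilde C\to C$ follows by looking at points over the diagonal. Ampleness of the normal bundle comes from the finiteness of $\gamma_Z$ (last claim in \cref{prop:cc-for-wedge-power}) via \cref{Lem:IntegralCC}(3), not from a conormal-sequence argument on $\Sym^d\tilde C$.

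For (2)$\Rightarrow$(1) your outline is close, but you pass over the point that $G^\ast_{C,\omega}\simeq\SL_n$ is not automatic for curves (a symmetric curve would give $\Sp_n$). The paper handles this by a reduction to $r=2$: fixing $r-2$ general points on $\tilde C$ embeds $\Sym^2\tilde C\hookrightarrow\Sym^r\tilde C\simeq\tilde X$, and one checks the resulting sum map $C^2\to W_2\subset X$ is finite of degree two, so $\delta_C*\delta_C$ has exactly two perverse summands and Larsen's alternative \cite[lemma~3.7]{JKLM} gives $G^\ast_{C,\omega}\simeq\SL_n$.
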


The proof will proceed in three steps: in~\cref{subsec:conormal-of-wedge} we discuss the general structure of conormal varieties of wedge powers, then in~\cref{subsec:sum-of-wedge} we show the finiteness of the resulting sum morphism, and in~\cref{subsec:conclusion-of-wedge} we conclude by some general arguments about symmetric powers of varieties.

\subsection{Conormal varieties of wedge powers} \label{subsec:conormal-of-wedge}

Let $Z\subset A$ be a subvariety with dominant Gauss map. Then there exists an open dense subset $U\subset \bbP_A$ over which the Gauss map $\gamma_Z$ is finite and flat. For an integer $r\ge 1$ we consider
 the Zariski closure
\[
 \PLambda_{Z}^{[r]} \;:=\; 
 \overline{\PLambda^{\times r}_{Z\mid U} \smallsetminus \Delta_r } 
 \;\subset\; A^r \times \bbP_A
\]
and put 
\[
 \Alt^r \PLambda_Z \;:=\; \tfrac{1}{r!} \, \sigma_*(\PLambda_{Z}^{[r]})
 \;\in\; \cL(A).
\]
for the sum morphism $\sigma\colon A^r \times \bbP_A \to A\times \bbP_A, (z_1, \dots, z_r, \xi) \mapsto (z_1+\cdots + z_r, \xi)$.

\begin{proposition} \label{prop:cc-for-wedge-power}
Let $X\subset A$ be a nondivisible subvariety with $\dim X > 0$ and with integral characteristic cycle. If $\omega(\delta_X)$ is an $r$-th wedge power for some $r\ge 1$, then there exists an integer $e\ge 1$ and an integral subvariety $Z\subset A$ with $\PLambda_Z \in \langle \PLambda_X \rangle$ such that
\[
 \Alt^r \PLambda_Z \;=\; \PLambda_{[e](X)}.
\]
In this case we have
\[ \deg \PLambda_X = \tbinom{n}{r}
\quad \text{for} \quad n = \deg \PLambda_Z, \medskip \]
and if the Gauss map $\gamma_X\colon \PLambda_X \to \bbP_A$ is a finite morphism, then so is $\gamma_Z\colon \PLambda_Z \to \bbP_A$.
\end{proposition}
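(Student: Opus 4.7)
The plan is to realize the standard representation of $\SL_n(\bbF)$ geometrically as the perverse intersection complex $\delta_Z$ of an integral subvariety $Z\subset A$ — after passing to the pushforward along a suitable isogeny $[e]\colon A\to A$ — and then to deduce the asserted cycle identity from the compatibility of characteristic cycles with antisymmetric convolution. Since $G^\ast_{X,\omega}\simeq \Alt^r(\SL_n(\bbF))$ acts on $\omega(\delta_X)$ via the $r$-th wedge of the standard representation, the representation $\bbF^n$ itself is only defined on a finite central cover of $G^\ast_{X,\omega}$; the center $\mu_r$ of $\SL_n(\bbF)$ acts trivially on $\Alt^r(\bbF^n)$. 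Pushforward along $[e]$ enlarges the Tannaka group by a finite central quotient (dual to the $e$-torsion of $A^\vee$), so for $e$ divisible enough the standard representation lifts to an object $Q$ of $\langle\delta_{[e](X)}\rangle$ with $\omega(Q)\simeq\bbF^n$ and $\Alt^r Q\simeq\delta_{[e](X)}$ inside this Tannaka category.

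To extract $Z$ from $Q$, I would decompose the semisimple perverse sheaf $Q$ into simple summands; each is the intersection complex of an integral subvariety twisted by an irreducible local system, up to ``negligible'' summands whose Gauss map is non-dominant and which contribute zero to the cycle $\cc$. The integrality assumption $\cc(\delta_X)=\PLambda_X$ gives $\cc(\delta_{[e](X)})=\PLambda_{[e](X)}$. By the compatibility of characteristic cycles with antisymmetric convolution (cf.~\cite[def.~5.2, cor.~5.7]{JKLM}) one has $\cc(\Alt^r Q)=\Alt^r\cc(Q)$ modulo components of smaller Gauss degree. Integrality of $\PLambda_{[e](X)}$ then forces $\cc(Q)$ to consist of a single dominant-Gauss-map component, so $Q$ is, up to negligible summands, the intersection complex $\delta_Z$ of a single integral subvariety $Z\subset A$; by construction $\PLambda_Z\in\langle\PLambda_X\rangle$ and $n=\dim\omega(Q)=\deg\PLambda_Z$.

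The remaining statements are now formal. The isomorphism $\Alt^r\delta_Z\simeq\delta_{[e](X)}$ yields the cycle identity $\Alt^r\PLambda_Z=\PLambda_{[e](X)}$; taking Gauss degrees and using that $[e]$ is \'etale gives $\deg\PLambda_X=\deg\PLambda_{[e](X)}=\binom{n}{r}$. For the finiteness of $\gamma_Z$: the property of having every component with finite Gauss map is inherited by elements of the Tannaka category of cycles generated by $\PLambda_X$, in view of \cite[prop.~2.5, lemma~2.6]{KraemerThetaSummands} (applied exactly as in the proof of~\cref{Lem:ProductDecomposition}); since $\PLambda_Z\in\langle\PLambda_X\rangle$ and $\gamma_X$ is finite, so is $\gamma_Z$.

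The main obstacle is the second step — producing a single integral geometric $Z$ from the abstract Tannakian datum of the standard representation, and in particular propagating the integrality of $\PLambda_X$ through the inverse of antisymmetric convolution. The isogeny $[e]$ is essential here, as it is what allows the passage from the quotient group $\Alt^r(\SL_n(\bbF))$ to honest $\SL_n(\bbF)$; without it, no perverse realization of $\bbF^n$ is available and the rigidity provided by integrality of $\PLambda_{[e](X)}$ cannot be exploited.
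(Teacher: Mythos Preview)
The paper's own proof is a one-line deferral: it observes that \cite[th.~7.4]{JKLM} was stated for smooth $X$, but that smoothness entered only to guarantee $\cc(\delta_X)=\PLambda_X$, so the same argument goes through verbatim under the hypothesis of integral characteristic cycle. Your sketch is therefore an attempted reconstruction of the argument behind that reference, and its overall shape---pass to an isogeny $[e]$ so that the standard representation of $\SL_n$ becomes available in the Tannaka category, realize it by an object $Q$, then take characteristic cycles---is indeed the strategy of~\cite{JKLM}.

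Two points in your write-up need correction. First, your citation of \cite[cor.~5.7]{JKLM} for the compatibility ``$\cc(\Alt^r Q)=\Alt^r\cc(Q)$ modulo lower terms'' is off: that corollary is a dimension bound for sums of subvarieties, not a multiplicativity statement for $\cc$. The relevant input is the ring homomorphism from the representation ring of $G^\ast_{X,\omega}$ to the ring of clean cycles $\langle\PLambda_X\rangle$, which is part of the microlocal formalism of~\cite{KraemerMicrolocalI,KraemerMicrolocalII} and is what~\cite[\S 5]{JKLM} sets up; this is also what makes the conclusion $\PLambda_Z\in\langle\PLambda_X\rangle$ automatic rather than something to be checked separately. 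Second, you work harder than necessary to obtain a single integral $Z$. Since $\omega(Q)\simeq\bbF^n$ is an \emph{irreducible} representation of the Tannaka group, $Q$ is already simple modulo negligibles and hence of the form $\IC(Z,L)$ for one integral $Z$ and an irreducible local system $L$; no appeal to the integrality of $\PLambda_{[e](X)}$ is needed for this step. What the integrality hypothesis genuinely buys is that $\cc(Q)$ has multiplicity one (equivalently, the generic rank of $L$ is $1$), so that the identity $\Alt^r\PLambda_Z=\PLambda_{[e](X)}$ holds on the nose. Your final paragraph on the finiteness of $\gamma_Z$ via membership in $\langle\PLambda_X\rangle$ is correct and matches the paper's own usage in the proof of~\cref{Lem:ProductDecomposition}.
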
 

\begin{proof}
In~\cite[th.~7.4]{JKLM} this was stated only when $X\subset A$ is a smooth integral subvariety, but the same proof applies verbatim also under the weaker assumption that $X\subset A$ is a subvariety with integral characteristic cycle.
\end{proof} 

\subsection{The sum morphism} \label{subsec:sum-of-wedge}
The next step in the proof of~\cref{Thm:SmallWedgePowersAreSumsOfCurves} is to use the dimension estimate for the image
\[
 \Alt^r Z \;:=\; 
 \im 
 \Bigl(
 \pr_A\colon \Supp(\Alt^r \PLambda_Z) \to A
 \Bigr)
 \;\subset\;
 A
\]
of the support of the cycle constructed above: for any integral subvariety $Z\subset A$ such that the Gauss map $\gamma_Z$ is a finite morphism and
$\dim \Alt^r Z < (\dim A - 1)/2$
for some $r\le \deg(\gamma_Z)/2$, we have
\[
 r \dim Z \;<\; \dim A
\]
by~\cite[th.~7.5]{JKLM}. We can therefore apply the following result to $Y=[e](X)$:

\begin{proposition} \label{prop:sum-for-wedge-power}
Let $Y, Z\subset A$ be integral subvarieties with finite Gauss map such that
\[
 \Alt^r \PLambda_Z \;=\; \PLambda_Y
\]
for some $r\ge 1$ with $r\dim Z < \dim A$. Then the following properties hold:
\begin{enumerate} 
\item The sum morphism $\sigma\colon Z^r \to A$ has image $Y\subset A$ and
induces a birational morphism
\[
 \tau\colon \quad \Sym^r Z \;=\; Z^r/\frS_r \;\longrightarrow\; Y
\]
\item If $\pr_Y\colon \PLambda_Y \to Y$ has constant fiber dimension, then $\tau$ is finite.
\end{enumerate}
\end{proposition}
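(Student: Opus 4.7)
\emph{Plan.} The defining identity $\Alt^r \PLambda_Z = \PLambda_Y$ reads $\sigma^{\PLambda}_{*} \PLambda_Z^{[r]} = r!\,\PLambda_Y$, where $\sigma^{\PLambda}\colon A^r\times \bbP_A \to A\times \bbP_A$ sums the first $r$ factors and is the identity on $\bbP_A$. Since source and target cycles both have dimension $g-1$, this forces $\sigma^{\PLambda}\colon \PLambda_Z^{[r]}\to \PLambda_Y$ to be surjective and generically $r!$-to-$1$. Projecting the identity of supports $\sigma^{\PLambda}(\PLambda_Z^{[r]}) = \PLambda_Y$ to the first factor of $A\times \bbP_A$ immediately yields $\sigma(Z^r)=Y$, which is the first half of~(1). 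Everything else will be extracted from this fact together with the commutative square whose verticals are the projections $\pi\colon \PLambda_Z^{[r]}\to Z^r$ and $\pi_Y\colon \PLambda_Y\to Y$.

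\emph{Part~(1), birationality of $\tau$.} Two things are needed: $\dim Y = r\dim Z$ and a generic fiber count of $r!$ for $\sigma_{Z^r}\colon Z^r\to Y$. For the dimension equality, finiteness of $\gamma_Z$ implies that $\Stab(Z)$ is finite (a positive-dimensional stabilizer $B$ would force every tangent space $T_z Z$ to contain $\Lie B$, hence every conormal direction to be orthogonal to $\Lie B$, precluding dominance of $\gamma_Z$). A standard abelian-variety argument, using the hypothesis $r\dim Z < \dim A$, then shows $\sigma_{Z^r}\colon Z^r\to A$ is generically finite onto its image $Y$, giving $\dim Y = r\dim Z$. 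For the fiber count, work over a generic $\xi\in U$: the $r!$ preimages in $\PLambda_Z^{[r]}$ of a generic $(y,\xi)\in \PLambda_Y$ are the $r!$ orderings of a single unordered $r$-subset of $\gamma_Z^{-1}(\xi)$ summing to~$y$. Using the dimension equality, every $(z_1', \dots, z_r')\in \sigma_{Z^r}^{-1}(y)$ for generic $y$ lifts to $\PLambda_Z^{[r]}$ via its well-defined common conormal direction, so the $r!$-to-$1$ count on the conormal side forces the unordered subset above~$y$ to be unique, whence $\tau$ is birational.

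\emph{Part~(2).} Assuming $\pi_Y$ has constant fiber dimension $g-1-r\dim Z$, properness of $Z\subset A$ makes $\Sym^r Z$ proper, so $\tau$ is proper and birational by~(1); it remains to rule out positive-dimensional fibers. Finiteness of $\gamma_Z$ makes the projection $\pi\colon\PLambda_Z^{[r]} \to Z^r$ have fiber dimension bounded below by $g-1-r\dim Z$ at every point. If $F\subseteq \tau^{-1}(y_0)$ were a positive-dimensional component with preimage $F'\subset Z^r$, then $\pi^{-1}(F')$ would have dimension at least $\dim F + (g-1-r\dim Z)$, while its image under~$\sigma^{\PLambda}$ lies in $\pi_Y^{-1}(y_0)$, of dimension exactly $g-1-r\dim Z$ by the constant fiber hypothesis; the generic finiteness of $\sigma^{\PLambda}$ then forces $\dim F = 0$, a contradiction. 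The most delicate step of the whole argument is the dimension equality $\dim Y = r\dim Z$ in part~(1): the cycle identity balances dimensions automatically through the above square, so ruling out the degenerate scenario $\dim Y < r\dim Z$ genuinely uses the finite Gauss map of $Z$ in the stabilizer argument.
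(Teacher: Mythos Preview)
Your overall strategy---working through the commutative square
\[
\begin{tikzcd}[column sep=40pt]
Z^r \ar[d, swap, "\sigma"] & \PLambda_Z^{[r]} \ar[d, "\tilde\sigma"] \ar[l, swap, "\pi"] \ar[r, "\gamma_{Z,r}"] & \bbP_A \ar[d, equals] \\
Y & \PLambda_Y \ar[l, "\pi_Y"] \ar[r, swap, "\gamma_Y"] & \bbP_A
\end{tikzcd}
\]
is exactly the paper's, but there are two genuine gaps.

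\textbf{Part~(1).} The inference ``$\Stab(Z)$ finite $\Rightarrow$ $\sigma_{Z^r}$ generically finite'' is not a standard fact, and it is false. Take $Z$ to be any surface with finite stabilizer lying inside a $3$-dimensional abelian subvariety $B$ of an abelian $5$-fold $A$; then $r\dim Z = 4 < 5 = g$, yet $Z+Z\subset B$ has dimension at most $3$. What you need is that $Z$ is \emph{nondegenerate}, which is strictly stronger than having finite stabilizer and which does follow from finiteness of $\gamma_Z$ by~\cite[th.~2.8]{JKLM}; nondegeneracy is precisely the hypothesis under which the sum map has the expected dimension. This is how the paper proceeds, referring to~\cite[prop.~7.12]{JKLM}. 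Your subsequent lifting step (``its well-defined common conormal direction'') is also not justified: an arbitrary $(z_1',\dots,z_r')\in\sigma^{-1}(y)$ carries no distinguished $\xi$, and there is no reason such a tuple lifts into the fiber of $\tilde\sigma$ over a \emph{fixed} $(y,\xi)$.

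\textbf{Part~(2).} The final clause invokes only \emph{generic} finiteness of $\sigma^{\PLambda}$, and that does not bound the dimension of $\tilde\sigma(\pi^{-1}(F'))$: the subvariety $\pi^{-1}(F')$ could lie entirely in the locus where $\tilde\sigma$ contracts. What makes the argument work is that $\tilde\sigma$ is in fact \emph{finite}: since $\gamma_{Z,r}$ is finite (being the restriction of the $r$-fold fiber power of the finite map $\gamma_Z$) and factors as $\gamma_Y\circ\tilde\sigma$, the proper morphism $\tilde\sigma$ is quasi-finite, hence finite. The paper states exactly this. With finiteness of $\tilde\sigma$ in hand, your dimension comparison goes through and yields the contradiction.
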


\begin{proof} 
The finiteness of the Gauss map $\gamma_Z$ implies that the subvariety $Z\subset A$ is nondegenerate~\cite[th.~2.8]{JKLM}. As in the proof of~\cite[prop.~7.12]{JKLM}, the assumption $r\dim Z < \dim A$ thus implies that $Y\subset A$ is a sum of $r$ copies of $Z$ and that the sum morphism $\tau\colon \Sym^r Z \to Y$ is birational, thus proving (1). Claim~(2) amounts to the statement that if the projection $\pr_Y$ has contant fiber dimension, then $\sigma$ is a finite morphism. This follows as in loc.~cit.~from the commutative diagram
\[
\begin{tikzcd}[column sep=40pt]
 Z^r  \ar[d, swap, "\sigma"] &  \PLambda_Z^{[r]}  \ar[d, "\tilde{\sigma}"]  \ar[l, swap, "\pr_{Z,r}"]  \ar[r,"\gamma_{Z,r}"] &  \bbP_A  \ar[d, equals] \\
Y &  \PLambda_Y \ar[l, "\pr_Y"] \ar[r, swap, "\gamma_Y"]  & \bbP_A
\end{tikzcd} 
\]
by the semicontinuity of the fiber dimension for proper morphisms, using that $\tilde{\sigma}$ is a finite morphism since the Gauss maps $\gamma_Z$ and $\gamma_{Z,r}$ are finite.
\end{proof} 

Note that $\PLambda_Y \to Y$ has constant fiber dimension when $Y = [e](X)$ for some integer $e \ge 1$ and for some nondivisible integral subvariety $X \subset A$ with smooth unramified normalization; see \cref{Lem:IntegralCC} (1).

\subsection{Conclusion of the proof of~\cref{Thm:SmallWedgePowersAreSumsOfCurves}}
\label{subsec:conclusion-of-wedge}

Let $X\subset A$ be an integral nondivisible subvariety of dimension $d<(g-1)/2$ which has smooth unramified normalization with ample normal bundle, and suppose that $\chi(\delta_X)=\tbinom{n}{r}$ with $1<r\le n/2$. 

\medskip 

If condition (1) in the theorem holds, then by~\cref{prop:cc-for-wedge-power,prop:sum-for-wedge-power} we can find an integral subvariety $Z\subset A$ and an integer $e\ge 1$ with the property that the sum morphism induces a finite birational morphism 
$ \Sym^r Z \rightarrow Y =[e](X)$.
Taking preimages under $[e]\colon A\to A$, the same argument as in~\cite[proof of th.~7.6]{JKLM} goes through without changes and shows that the sum morphism induces a finite birational morphism
\[
 \Sym^r C \;\too \;X,
\]
where $C\subset A$ is a suitable translate of an irreducible component of $[e]^{-1}(Z)$. So we get an isomorphism
\[
 \Sym^r \tilde{C} \;\stackrel{\sim}{\longrightarrow}\; \tilde{X}
\]
between the respective normalizations. Since by assumption the normalization $\tilde{X}$ is smooth, it follows that $C$ must be a curve. Hence (2) holds. Moreover, since we assumed the morphism $\tilde{X}\to X$ to be unramified, we know that $\Sym^r \tilde{C} \to \Sym^r C$ is unramified. By looking at points over the diagonal one sees that this can happen only if $\tilde{C} \to C$ is unramified. 

\medskip 

Conversely, suppose that condition (2) of the theorem holds for some integral nondegenerate curve $C\subset A$ and some integer $r>1$. We have a commutative diagram
\[ 
\begin{tikzcd} 
\Sym^r \tilde{C} \ar[r, "\tilde{\tau}_r"] \ar[d, swap, "\mu_r"] & \tilde{X} \ar[d, "\nu_r"]
\\
\Sym^r C \ar[r, "\sigma_r"] & X
\end{tikzcd}
\]
where $\tilde{\tau}_r$ is an isomorphism and $\mu_r$ and $\nu_r$ are finite birational since they are normalization morphisms. To reduce to the case $r=2$, we fix $r-2$ general points on $\tilde{C}$ and embed $\Sym^2 \tilde{C}$ into $\Sym^r \tilde{C}$ by adding those points. Then~$\tilde{\tau}_r$ restricts to a morphism
\[
 \Sym^2 \tilde{C} \;\longrightarrow\; \tilde{X}
\]
which is still an isomorphism onto its image. Denote this image by $\tilde{W}_2\subset \tilde{X}$ and put $W_2 = \nu(\tilde{W}_2)$. We get a commutative diagram
\[
\begin{tikzcd}
\Sym^2 \tilde{C} \ar[r, "\tilde{\tau}_2"] \ar[d, swap, "\mu_2"] & \tilde{W}_2 \ar[d, "\nu_2"]
\\
\Sym^2 C \ar[r, "\tau_2"] & W_2
\end{tikzcd}
\]
where $\tilde{\tau}_2$ is still an isomorphism and where $\mu_2$ and $\nu_2$ are still finite birational because we fixed $r-2$ \emph{general} points of $\tilde{C}$. Hence it follows that $\tau_2$ is a finite birational morphism. By precomposing it with the double cover $C^2 \to \Sym^2 C$ we see that the sum morphism
$\sigma_2 \colon C^2 \too W_2$
is finite of degree two, so the convolution square
\[
 \delta_C * \delta_C \;=\; \sigma_{2*}(\delta_{C^2})
\]
has only two perverse direct summands. It then follows from Larsen's alternative that $G_{X,\omega}^\ast \simeq \SL_n(\bbF)$ where $n=\chi(\delta_C)$, see~\cite[lemma~3.7]{JKLM}. Moreover, by construction
\[
 \omega(\delta_X) \;\simeq\; \omega(\tau_{r*}(\delta_C)) \;\simeq\; \Alt^r(\omega(\delta_C)),
\]
hence condition (1) in the theorem holds. 
\qed

\section{Spin representations} \label{sec:Spin}

We now exclude spin representations for subvarieties of high codimension that have smooth unramified normalization with ample normal bundle.

\subsection{How to rule out the spin case} 
For $n \ge 1$, we say that a subvariety $X\subset A$ is 
\begin{itemize} 
\item {\em of spin type $B_n$} if its Tannaka group is the spin group $G_{X, \omega}^\ast \simeq \Spin_{2n+1}(\bbF)$,\smallskip
\item {\em of spin type $D_n$} if its Tannaka group is a half-spin group $G_{X, \omega}^\ast \simeq \Spin_{2n}^\pm(\bbF)$.\smallskip
\end{itemize} 
In this definition we do not make any assumption on the representation $V_X=\omega(\delta_X)$ of the Tannaka group, since it will be automatically determined in the case relevant to us:
If a nondivisible subvariety $X\subset A$ has smooth unramified normalization, then by~\cref{Lem:IntegralCC} it has integral characteristic cycle, so the group $G_{X,\omega}^\ast$ acts on~$V_X$ via a minuscule representation \cite[cor.~1.10]{KraemerMicrolocalI}~\cite[cor.~5.15]{JKLM}. A look at the list of faithful minuscule representations then shows:\smallskip
\begin{itemize} 
\item If $X$ is of spin type $B_n$, then $V_X$ is the spin representation.\smallskip
\item If $X$ is of spin type $D_n$, then $V_X$ is the associated half-spin representations.\smallskip
\end{itemize}
We exclude both cases in high enough codimension:

\begin{theorem}\label{Thm:SmallSpinDoNotExist} Let $X\subset A$ be an integral nondivisible subvariety which has smooth unramified normalization with ample normal bundle and dimension $d<(g-1)/2$.\smallskip
\begin{enumerate}
\item The subvariety $X\subset A$ is not of spin type $B_n$ for any $n\ge 1$. \smallskip
\item If $X$ is of spin type $D_n$ for some $n$, then $d \ge (g - 1)/4$ and $n = 2m$ for some integer $m \in \{3,\dots,d\}$ which has the same parity as $d$.
\end{enumerate}
\end{theorem}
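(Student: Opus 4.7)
My plan is to combine three tools: (i) the classification of minuscule representations, which restricts $V_X=\omega(\delta_X)$ to a (half-)spin representation of known dimension; (ii) the Euler-characteristic bound of \cref{Lem:EulerCharacteristicBound}, applied both to $X$ itself and to subvarieties appearing in the convolution decomposition of $\delta_X\ast \delta_X^\vee$; and (iii) the clean-cycle calculus of~\cite{JKLM} relating representation-theoretic tensor-product decompositions to geometric decompositions of clean cycles.

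First I would collect numerics. By \cref{Lem:IntegralCC} the subvariety $X$ has integral characteristic cycle and finite Gauss map, so by~\cite[cor.~5.15]{JKLM} the representation $V_X$ is minuscule: the spin representation ($\dim V_X=2^n$) in type $B_n$, a half-spin representation ($\dim V_X=2^{n-1}$) in type $D_n$. In both cases $\chi(\delta_X)=|\chi_\top(\tilde X)|$, and \cref{Lem:EulerCharacteristicBound} gives $|\chi_\top(\tilde X)|\ge g$. I would then decompose $V_X\otimes V_X^\vee$ under the spin group as a sum of exterior powers $\Alt^k V_{\mathrm{std}}$ of the standard orthogonal representation (of dimensions $\binom{2n+1}{k}$ or $\binom{2n}{k}$, over a range of $k$ dictated by self-duality), and translate this via Tannakian duality into a decomposition of the clean cycle of $\delta_X\ast\delta_X^\vee$ into irreducible pieces $\PLambda_{Y_k}\in\langle \cc(\delta_X)\rangle$ supported on integral subvarieties $Y_k\subset A$ of dimension at most $2d<g-1$, each with finite Gauss map and with $|\chi_\top(\tilde Y_k)|$ equal to $\dim\Alt^k V_{\mathrm{std}}$.

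For part~(2), self-duality of the half-spin representation of $\Spin_{2n}$ forces $n=2m$ to be even; the Frobenius--Schur indicator (equal to $+1$ for $m$ even and $-1$ for $m$ odd) must agree with the symmetric/alternating form preserved by $G_{X,\omega}^\ast$ on $V_X$, which is symmetric if $d$ is even and alternating if $d$ is odd, giving $m\equiv d\pmod 2$. Applying \cref{Lem:EulerCharacteristicBound} to the smallest nontrivial constituents $Y_k$ — which for $n=2m$ even is the adjoint piece $\Alt^2 V_{\mathrm{std}}$ of dimension $n(2n-1)$ — together with the dimension constraint $\dim Y_k\le 2d$ matched against the dimensions of the highest exterior powers $\Alt^m V_{\mathrm{std}}$, will yield both the range $m\in\{3,\dots, d\}$ and the bound $d\ge (g-1)/4$; the small cases $m\le 2$ are excluded via the accidental isomorphisms $D_2\simeq A_1\times A_1$ and $D_3\simeq A_3$, whose resulting Tannaka types are already treated by \cref{Thm:SimpleTannakaGroups} and \cref{Thm:SmallWedgePowersAreSumsOfCurves}.

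The main obstacle is part~(1): the bounds $2^n\ge g$ and $\dim\Alt^k V_{\mathrm{std}}\ge g$ from the smallest constituents are simultaneously consistent for every $n\ge 1$, so ruling out $B_n$ requires extracting a contradiction from the joint presence of the whole tower of constituents $\Alt^k V_{\mathrm{std}}$ ($k=0,1,\dots,n$). A key intermediate step, and probably the most delicate one, is to show that each $Y_k$ inherits from $X$ enough geometry — notably smooth unramified normalization with ample normal bundle — to legitimately apply \cref{Lem:EulerCharacteristicBound}; this is not automatic and is in the spirit of \cref{Cor:Product}. Combining this with the Frobenius--Schur parity constraints for the spin representation of $B_n$ (which depend on $n\bmod 4$) and with the sharper upper bounds on Euler characteristics available for nondegenerate subvarieties in dimensions $<(g-1)/2$, a uniform contradiction for every $n\ge 1$ should follow.
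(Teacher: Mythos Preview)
Your approach is genuinely different from the paper's and has real gaps. The paper does not analyse the tensor square $V_X\otimes V_X^\vee$ at all. Instead it invokes the conormal-variety machinery of~\cite[\S8]{JKLM}: from the spin hypothesis one extracts (\cref{Prop:ConormalToSpin}) a reduced symmetric effective cycle $Z$ on $A$ with finite Gauss map of degree $2n$ such that $\Alt^n_{\sym,\epsilon}\PLambda_Z=\PLambda_{[e](X)}$. One then plays two dimension estimates against each other: \cref{Prop:SpinLowerBound} gives $n\dim Z\ge g$, while the bounds recalled just before it (from~\cite[th.~8.6]{JKLM}) give $n\dim Z<g-1$ whenever $d<(g-1)/2$ in type $B_n$, and whenever $d<(g-1)/4$ or $n$ is odd or $m>d+1$ in type $D_n$. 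This immediately yields part~(1) and most of part~(2); the parity $m\equiv d\pmod 2$ comes from the Frobenius--Schur indicator exactly as you say, and the single remaining case $m=2$ is killed by noting $\chi_\top(\tilde X)=2^{n-1}=8$, which contradicts \cref{Lem:EulerCharacteristicBound,,Lem:EulerCharacteristicBoundSurface} since $g>2d+1\ge 5$.

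Your proposal, by contrast, rests on applying \cref{Lem:EulerCharacteristicBound} to the constituents $Y_k$ of $\delta_X*\delta_X^\vee$. This requires that each $Y_k$ have smooth unramified normalization with ample normal bundle, which you flag as ``not automatic'' --- and indeed there is no mechanism available to prove it: the analogue of \cref{Cor:Product} works only for factors in a \emph{product} decomposition of $X$ itself, not for supports of arbitrary Tannakian subquotients. Without that inheritance, your Euler-characteristic inequalities have no footing, and you yourself note that even granting them you do not see how to close part~(1). There is also a slip in your treatment of small $m$: since $n=2m$ is always even, the case $m=2$ is $D_4$, not $D_3$, so the accidental isomorphism $D_3\simeq A_3$ is irrelevant; the paper disposes of $m=2$ numerically as above.
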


The proof of this follows a similar pattern as the argument for wedge powers and will take up the rest of this section.

\subsection{Conormal varieties in the spin case} 

Let $Z$ be a symmetric reduced clean effective cycle on $A$ with dominant Gauss map. Then there exists an open dense subset $U\subset \bbP_A$ over which the Gauss map $\gamma_Z$ is finite \'etale. For an integer $r\ge 1$ we consider the Zariski closure
\[
\PLambda_{Z,\sym}^{[r]} \;:=\; 
\overline{\PLambda_{Z\vert U}^{\times r} \smallsetminus (\Delta_r \cup \Delta_r^-)} \;\subset\; \PLambda_Z^{\times r}
\] 
and put
\[
 \Alt^{r}_\sym \PLambda_Z  \;:=\; \sigma_{*}(\PLambda_{Z, \sym}^{[r]})
\]
for the sum morphism $\sigma \colon A^r \times \bbP_A \to A\times \bbP_A$. We say the Gauss map $\gamma_Z\colon \PLambda_Z \to \bbP_A$ has {\em even monodromy} if its degree is an even integer $\deg(\gamma_Z)=2n$ and the finite \'etale cover $\gamma_{Z\vert U}\colon \PLambda_{Z\vert U} \to U$ has its monodromy group contained in the index two subgroup
\[ H \;:=\; \{\pm 1\}^n_+\rtimes \frS_n 
\;\subset\; \{\pm 1\}^n \rtimes \frS_n \]
where $\{\pm 1\}^n_+ := \{ (\epsilon_1, \dots, \epsilon_n) \in (\pm 1)^n \mid \epsilon_1 \cdots \epsilon_n = 1\}$. Looking at the orbits of this index two subgroup on a general fiber of the Gauss map, one obtains a decomposition
\[
 \PLambda_{Z, \sym}^{[n]} \;=\; \PLambda_{Z, \sym, +}^{[n]} +  \PLambda_{Z, \sym, -}^{[n]}
\]
where $H$ acts transitively on the fibers of each of the two summands $\PLambda_{Z, \sym, \pm}^{[n]}$ on the right hand side. Note that there is no intrinsic way to distinguish the two summands from each other. For $r=n$ we put
\[
 \Alt^n_{\sym, \pm} \PLambda_Z \;:=\; \sigma_{*}(\PLambda_{Z, \sym, \pm}^{[n]}).
\] 
These appear naturally as characteristic cycles in the spin case:

\begin{proposition} \label{Prop:ConormalToSpin}
Let $X\subset A$ be a nondivisible subvariety with dominant Gauss map and integral characteristic cycle. If $X$ is of spin type $B_n$ or $D_n$ for some $n$, then there exists $a\in A(k)$, a reduced symmetric effictive cycle $Z$ on $A$ of Gauss degree $\deg \PLambda_Z = 2n$ and an integer $e\ge 1$ with the following properties:\smallskip
\begin{enumerate} 
\item If $X$ is of spin type $B_n$, then $\Alt^n_\bbS \PLambda_Z = \PLambda_{[e](X+a)}$.\smallskip
\item If $X$ is of spin type $D_n$, then the Gauss map $\gamma_Z$ has even monodromy and we have
\[
 \Alt^n_{\bbS, \epsilon} \PLambda_Z \;=\; \PLambda_{[e](X+a)}
 \quad \text{for some} \quad \epsilon \;\in\; \{ +, -\}.
\]
\end{enumerate}
In both cases, if $\gamma_X\colon \PLambda_X \to \bbP_A$ is a finite morphism, then so is $\gamma_Z\colon \PLambda_Z \to \bbP_A$.
\end{proposition}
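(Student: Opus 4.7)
The approach will closely parallel that of~\cref{prop:cc-for-wedge-power}: the plan is to translate the branching structure of spin representations into an identity of clean characteristic cycles on $A$. Starting from the Tannakian category $\langle \delta_X\rangle$, I would first produce a perverse sheaf $Q$ realising the standard representation $W$ of the orthogonal group (type~$B_n$) or special orthogonal group (type~$D_n$) of which $G_{X,\omega}^\ast$ is the spin cover, and then match the components of $\cc(Q)$ against $\PLambda_{[e](X+a)}$ using the combinatorics of the weights of the spin representation.

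More precisely, $G_{X,\omega}$ admits a natural quotient isogenous to $\SO_{2n+1}$ (resp.~$\SO_{2n}$), whose standard representation $W$ occurs as a constituent of some tensor construction on $V_X = \omega(\delta_X)$, so Tannakian reconstruction yields a perverse sheaf $Q \in \langle \delta_X\rangle$ with $\omega(Q)\simeq W$. Since $W$ is self-dual, so is $Q$, and the characteristic cycle $\cc(Q)$ is symmetric. The cycle $Z$ would then be defined as the sum of those components of $\cc(Q)$ with dominant Gauss map, which after removal of the single zero-weight component present only in type~$B_n$ gives in both cases a reduced symmetric effective cycle of Gauss degree $2n$.

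The heart of the argument would be verifying the identity $\Alt^n_\bbS \PLambda_Z = \PLambda_{[e](X+a)}$ (with the signed variant in type~$D_n$). The combinatorial input is that, after a suitable isogeny $[e]$ absorbing the half-integrality of spin weights, the $2^n$ weights of the spin representation are precisely the sums $\sum_i \epsilon_i e_i$ over all sign patterns $(\epsilon_1,\dots,\epsilon_n)\in\{\pm 1\}^n$, i.e.~one weight chosen from each pair of opposite nonzero weights of $W$. This is exactly the scheme-theoretic content of the pushforward $\Alt^n_\bbS \PLambda_Z$, which sums ordered $n$-tuples of points of $\PLambda_Z$ lying outside the diagonal $\Delta_n$ and antidiagonal $\Delta_n^-$. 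In type~$D_n$ the Weyl group allows only sign changes with product $+1$, which translates into the stated even-monodromy condition on $\gamma_Z$ and selects exactly one of the two half-spin cycles $\Alt^n_{\bbS,\pm}\PLambda_Z$, the sign $\epsilon$ recording which half-spin representation is carried by $V_X$.

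The main obstacle will be making this Tannakian-to-geometric translation fully rigorous: the half-integrality of spin weights forces the introduction of the isogeny $[e]$, the translation $a\in A(k)$ is needed to absorb the action of the center of the spin group, and one must check that symmetry and cleanness of $\cc(Q)$ transport correctly through the construction. The finiteness assertion is then the easy part: once $\PLambda_Z$ is exhibited as a sum of components of some $\cc(Q)\in\langle\cc(\delta_X)\rangle$, one concludes as in~\cref{prop:cc-for-wedge-power} via~\cite[lemma~2.3]{KraemerThetaSummands}, since the Tannakian category generated by $\cc(\delta_X)$ preserves finiteness of Gauss maps, so finiteness of $\gamma_X$ forces finiteness of $\gamma_Z$.
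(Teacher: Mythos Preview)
Your proposal is essentially correct and reconstructs the argument of~\cite[th.~8.5]{JKLM}; the paper itself does not reprove this but simply cites that theorem, observing that the proof there, stated for smooth $X\subset A$, goes through verbatim once one only assumes that $X$ has integral characteristic cycle. Your sketch implicitly respects this, since you never invoke smoothness of $X$ beyond the integrality of $\cc(\delta_X)$, so there is nothing to correct.
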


\begin{proof}
In~\cite[th.~8.5]{JKLM} this was stated only when $X\subset A$ is a smooth integral subvariety, but the same proof applies verbatim also under the weaker assumption that $X\subset A$ is a subvariety with integral characteristic cycle.
\end{proof} 

\subsection{The sum morphism} 

The next step in the proof of~\cref{Thm:SmallSpinDoNotExist} is to consider the image
\[
 \Alt^n_{\bbS, \epsilon} Z \;:=\; 
 \pr_A(\Supp(\Alt^n_{\bbS, \epsilon})) 
 \;\subset\; A
 \quad \text{for} \quad \epsilon \;\in\; \{+, -, \varnothing\}.
\]
For any reduced symmetric effective cycle $Z$ on $A$ whose Gauss map is a finite morphism of even degree $\deg(\gamma_Z)=2n$, we know from~\cite[th.~8.6]{JKLM}:\smallskip
\begin{enumerate} 
\item If $\dim \Alt^n_\bbS Z < (g-1)/2$, then $n\dim Z < g-1$.\smallskip
\item If $\gamma_Z$ has even monodromy and there is $\epsilon \in \{+,-\}$ for which $r:=\dim \Alt^n_{\bbS, \epsilon} Z$ satisfies
\[
 r \;<\;
 \begin{cases}
 (g-1)/4 & \text{if $n=2m$ is even and $m\le r+1$}, \\
 (g-1)/2 & \text{otherwise},
 \end{cases}
\]
then $n\dim Z < g-1$.
\end{enumerate}  
In what follows, we will use the uniform notation $\Alt^n_{\bbS, \epsilon} \PLambda_Z$ for $\epsilon \in \{+,-,\varnothing\}$ to include all the above cases. For $\epsilon \in \{ \pm 1\}$ this notation includes the tacit assumption that the Gauss map $\gamma_Z$ has even monodromy. With this convention we have the following dimension estimate in the opposite direction to (1) and (2) above:

\begin{proposition} \label{Prop:SpinLowerBound} 
Let $X\subset A$ be a nondivisible subvariety with finite Gauss map and integral characteristic cycle such that the projection $\pr_X\colon \PLambda_X \to X$ has constant fiber dimension. Suppose that there is a reduced symmetric effective cycle $Z$ on $A$ with finite Gauss map of degree $2n$ such that
\[ \Alt^n_{\bbS, \epsilon} \PLambda_Z = \PLambda_{[e](X)} \]
for some integers $n\ge 2$, $e\ge 1$ and some $\epsilon \in \{+,-,\varnothing\}$. Then $n\dim Z \ge g$.
\end{proposition}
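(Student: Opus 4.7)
I plan to argue by contradiction, supposing that $n\dim Z < g$; the strategy is to show this assumption forces $[e](X) = Z + \cdots + Z$, which then becomes incompatible with the remaining hypotheses. Since $\gamma_Z$ is finite, by \cite[th.~2.8]{JKLM} the subvariety $Z \subset A$ is geometrically nondegenerate, so the standard dimension formula for sums of nondegenerate subvarieties in abelian varieties gives $\dim(Z+\cdots+Z) = \min(n\dim Z, g) = n\dim Z$.

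Next I would identify $[e](X) = Z+\cdots+Z$. One direction follows immediately from the cycle equation: the image of $\sigma\colon \PLambda_{Z, \sym, \epsilon}^{[n]} \to A\times \bbP_A$ has its $A$-coordinate in $Z+\cdots+Z$ and equals the support $\PLambda_{[e](X)}$, whose $A$-projection is $[e](X)$. For the reverse inclusion, pick any tuple of smooth points $(z_1,\dots,z_n)\in Z^n$ with pairwise distinct and pairwise non-antipodal entries; the intersection $\bigcap_{i=1}^n (T_{z_i}Z)^\perp \subset \Lie(A)^\vee$ has dimension at least $g - n\dim Z > 0$, so its projectivization is nonempty, and choosing $\xi$ in it lifts the tuple into $\PLambda_{Z, \sym, \epsilon}^{[n]}$, which $\sigma$ sends to a point of $\PLambda_{[e](X)}$ with $A$-coordinate $z_1+\cdots+z_n$. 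Taking closures yields $Z+\cdots+Z \subseteq [e](X)$, hence equality and $\dim[e](X) = n\dim Z$.

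The final step derives the contradiction, exploiting the constant-fiber-dimension hypothesis on $\pr_X\colon \PLambda_X \to X$. Pulled back along the \'etale isogeny $[e]\colon A \to A$, this yields constant fiber dimension for $\pr_{[e](X)}\colon \PLambda_{[e](X)} \to [e](X)$, equal to $g - 1 - n\dim Z$ at the generic smooth point of the sum $Z + \cdots + Z$. At points on the generalized diagonal of the sum morphism $Z^n \to Z+\cdots+Z$---namely $y = 2z + z_3 + \cdots + z_n$ for $z, z_3, \dots, z_n$ generic---the Zariski tangent space of $[e](X)$ is the sum of only $n-1$ copies of tangent spaces to $Z$, hence strictly smaller in dimension, which forces the conormal fiber at $y$ to have strictly larger projective dimension than $g-1-n\dim Z$, contradicting constancy.

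I expect the main obstacle to be this last step: while the structural identification $[e](X) = Z+\cdots+Z$ is clean, extracting a rigorous contradiction from the apparent jump in conormal fiber dimension requires a careful analysis of the closure $\PLambda_{[e](X)}$ near the singular locus of the sum, since the fiber of $\pr_{[e](X)}$ at a singular point is obtained by closure from smooth points and need not coincide with the naive Zariski conormal. An alternative route---using the $\frS_n$-action on $\PLambda_{Z, \sym, \epsilon}^{[n]}$ to extract a multiplicity mismatch in the pushforward, or appealing to the nondivisibility of $X$ to exclude $[e](X)$ from being such a symmetric sum---might provide a cleaner contradiction.
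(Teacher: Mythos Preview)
Your opening moves are sound and match what the paper (via \cite[th.~8.7]{JKLM}) does: assuming $n\dim Z < g$, nondegeneracy of $Z$ forces $[e](X)=Z+\cdots+Z$ with $\dim[e](X)=n\dim Z$, and the commutative diagram
\[
\begin{tikzcd}[column sep=40pt]
 Z^n  \ar[d, swap, "\sigma"] &  \PLambda_{Z,\sym,\epsilon}^{[n]}  \ar[d, "\tilde{\sigma}"]  \ar[l, swap, "\pr_{Z,n}"]  \ar[r] &  \bbP_A  \ar[d, equals] \\
{[e](X)} &  \PLambda_{[e](X)} \ar[l, "\pr"] \ar[r]  & \bbP_A
\end{tikzcd}
\]
together with the constant-fibre-dimension hypothesis shows, exactly as in the proof of \cref{prop:sum-for-wedge-power}(2), that the sum morphism $\sigma\colon Z^n\to [e](X)$ is \emph{finite}.

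The gap is in your step~3. Your proposed contradiction via the ordinary diagonal $y=2z+z_3+\cdots+z_n$ cannot work: the same configuration occurs in the wedge-power case, where no contradiction arises---there one simply concludes that $\sigma$ is finite, which it is over such points. Your own caveat about the closure definition of $\PLambda_{[e](X)}$ is exactly why the ``tangent space jumps'' heuristic fails to produce a violation of constant fibre dimension.

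What you are missing is the one feature that distinguishes the spin situation from the wedge one: the cycle $Z$ is \emph{symmetric}, i.e.\ $Z=-Z$. For $n\ge 2$ and generic $z_3,\dots,z_n\in Z$, the point $y=z_3+\cdots+z_n$ lies in $[e](X)$ because $y=z+(-z)+z_3+\cdots+z_n$ for any $z\in Z$, and the fibre $\sigma^{-1}(y)$ contains the entire family
\[
\{(z,-z,z_3,\dots,z_n)\;:\;z\in Z\},
\]
which has dimension $\dim Z>0$. This directly contradicts the finiteness of $\sigma$ obtained above. The contradiction thus comes from the \emph{anti}-diagonal, not the diagonal; this is precisely the argument in \cite[th.~8.7]{JKLM} that the paper invokes.
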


\begin{proof} 
In~\cite[th.~8.7]{JKLM} this was stated only when $X$ is smooth, but the proof goes through without change in the more general setting considered here.
\end{proof}

\subsection{Conclusion of the proof of~\cref{Thm:SmallSpinDoNotExist}} \label{Subsec:ConclusionSpin}

Let $X\subset A$ be a nondivisible subvariety of dimension $d$ with smooth unramified normalization and ample normal bundle. Suppose $X$ is of spin type $B_n$ or $D_n$ for some integer $n\ge 1$. Replacing $X$ by some translate, we obtain from~\cref{Prop:ConormalToSpin} an integer $e\ge 1$ and a reduced symmetric effective cycle $Z$ on $A$ with finite Gauss map of degree $\deg(\gamma_Z)=2n$ such that
\[ \Alt^{n}_{\sym, \epsilon} \PLambda_Z \;=\; \PLambda_{[e](X)}
\quad \textnormal{for some} \quad \epsilon \;\in\; \{+, -, \varnothing\}. \]
Since $X$ has smooth unramified normalization, the projection $\pr_X\colon \PLambda_X \to X$ has constant fiber dimension. Moreover, the Gauss map $\gamma_Z$ is finite, so~\cref{Prop:SpinLowerBound} shows $n\dim Z \ge g$. But
\[ \dim \Alt^{n}_{\sym, \epsilon} Z \;=\; \dim [e](X) \;=\; \dim X \;=\; d.	\]
Hence by the above items (1) and (2) with $r=d$ together with~\cite[lemma~8.8]{JKLM} we have
\[ d \;\ge\; (g-1)/2 \]
unless $X$ is of spin type $D_{2m}$ for some integer $m\le d+1$, in which case 
$ d \ge (g-1)/4$.
In this case $m$ must have the same parity as $d$ because the half-spin representation is orthogonal for even $m$ and symplectic for odd $m$~\cite[rem.~8.2]{JKLM}. It only remains to exclude the case where $m=2$ and $d$ is even. But in this case we would have $\chi_\top(\tilde{X})=\chi(\delta_X)=8$, which is impossible by~\cref{Lem:EulerCharacteristicBound,,Lem:EulerCharacteristicBoundSurface}.
\qed

\section{Exceptional groups} \label{sec:Exceptional}

Finally we rule out the appearance of exceptional groups as Tannaka groups in the cases relevant to theorems \ref{IntroThmIntrinsicSurfaces} and \ref{IntroThmIntrinsicGeneral}.

\subsection{How to rule out exceptional groups}  The main goal of this section is the following result:

\begin{theorem} \label{Thm:NoExceptionals} Let $X\subset A$ be an integral subvariety which has smooth unramified normalization with ample normal bundle and dimension $d < (g-1)/2$. Then,
\[G_{X, \omega}^\ast  \; \not \iso \;  E_6, E_7.\]
\end{theorem}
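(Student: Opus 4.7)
The plan is to mirror the strategy of \cref{sec:WedgePowers} and \cref{sec:Spin}. Assume for contradiction that $G_{X,\omega}^\ast \simeq E_6$ or $E_7$. Since $X$ has smooth unramified normalization with ample normal bundle, \cref{Lem:IntegralCC} shows that $X$ has integral characteristic cycle and that the Gauss map $\gamma_X$ is finite. By the minusculeness results cited in \cref{sec:Spin}, the representation $\omega(\delta_X)$ of $G_{X,\omega}^\ast$ is then faithful and minuscule. The classification of minuscule representations of simple algebraic groups leaves essentially one option in each exceptional case: the $27$-dimensional representation of $E_6$ and the $56$-dimensional one of $E_7$. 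In particular
\[
 |\chi_\top(\tilde X)| \;=\; \chi(\delta_X) \;\in\; \{27, 56\},
\]
and combined with \cref{Lem:EulerCharacteristicBound} this forces $g \le 56$ and hence $d \le 27$.

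The core of the argument is to realize $\PLambda_{[e](X)}$, for some isogeny degree $e\ge 1$, as the image of a clean-cycle operation applied to $\PLambda_Z$ for an auxiliary integral subvariety $Z \subset A$ with finite Gauss map; this is the exceptional-type counterpart of $\Alt^r \PLambda_Z$ in \cref{prop:cc-for-wedge-power} and of $\Alt^n_{\bbS, \pm} \PLambda_Z$ in \cref{Prop:ConormalToSpin}. The relevant structural statements are proven in \cite{JKLM} under the hypothesis that $X$ is smooth, but, exactly as for \cref{prop:cc-for-wedge-power} and \cref{Prop:ConormalToSpin}, their proofs only use that $X$ has integral characteristic cycle and apply verbatim in our setting. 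I would then extract two estimates. First, an image-dimension estimate for the exceptional clean-cycle operation, playing the role of items (1)--(2) preceding \cref{Prop:SpinLowerBound}, which combined with $\dim X = d < (g-1)/2$ forces $\dim Z$ to be small in terms of $g$. Second, a lower bound of the form $n \cdot \dim Z \ge g$, in the spirit of \cref{Prop:SpinLowerBound}, coming from the fact that $\pr_X \colon \PLambda_X \to X$ has constant fiber dimension (which is automatic under our hypotheses, again by \cref{Lem:IntegralCC}). These two bounds, together with the numerical bounds on $g$ and $\chi_\top(\tilde X)$ already extracted, should be incompatible.

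The main obstacle is the orbit combinatorics for the exceptional minuscule representations: unlike the wedge and spin cases, where the relevant Weyl-group action is elementary, the Weyl orbits of $E_6$ on the $27$ weights and of $E_7$ on the $56$ weights have a less transparent structure, and the image-dimension estimate needs to be imported carefully from \cite{JKLM}. Fortunately, the numerical window is very narrow: $g \le 56$ and $d \le 27$, and in low dimensions the stronger Euler-characteristic bound of \cref{Lem:EulerCharacteristicBoundSurface} can be brought to bear on the surface case. Should a uniform argument based on the exceptional Weyl-orbit combinatorics prove too delicate, the plan is to fall back on a finite case check within this narrow range, ruling out each admissible triple $(g, d, n)$ by hand.
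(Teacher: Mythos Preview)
Your plan has a genuine gap. The wedge and spin arguments work because those minuscule representations are built combinatorially from a \emph{smaller} representation of the same group (the standard representation of $\SL_n$, resp.\ the vector representation of $\Spin_{2n}$); this is what produces the auxiliary subvariety $Z$ and the clean-cycle operations $\Alt^r$, $\Alt^n_{\bbS,\pm}$. The exceptional groups $E_6$ and $E_7$ have no such building block: their $27$- and $56$-dimensional minuscule representations are not obtained from any smaller representation by a Weyl-equivariant construction of this kind. Consequently there is no ``exceptional clean-cycle operation'' and no analog of $Z$; the structural statements you hope to import from \cite{JKLM} simply do not exist there for $E_6$ and $E_7$. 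The fallback to a finite case check is also not a proof as stated, and the numerical window $g\le 56$, $d\le 27$ is far too wide for this to be feasible without further input.

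The paper takes an entirely different route, with two independent arguments. For $E_7$ one uses Larsen's alternative on the convolution square: the $56$-dimensional representation is symplectic, so $d$ is odd and $X$ is symmetric up to translation, and the representation theory of $E_7$ forces $\Alt^2(\delta_X)$ to have, modulo negligibles, exactly one nontrivial simple summand besides the skyscraper $\delta_a$. But for $d<g/2$ one shows directly that $S_+(\delta_X)$ is perverse with no negligible summands, and if it is simple then Larsen's alternative gives $G_{X,\omega}^\ast = \Sp(V,\theta)$, a contradiction. For $E_6$ one uses Hodge theory: the Hodge decomposition of $V=\omega(\delta_X)$ yields a cocharacter $\lambda\colon \Gm \to G_{X,\omega}^\ast$, and comparing its weight multiplicities (bounded below via positivity of the normal bundle) with the possible cocharacters of $E_6$ acting on the $27$-dimensional representation forces $d=2$ and $\chi(\tilde X,\cO_{\tilde X})=6$. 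Classical surface geometry (Noether's formula, degree of the Gauss map and of the difference morphism, the Eisenbud--Harris bound on degree versus codimension) then pins down $g=5$. Since $(d,g)=(2,5)$ violates $d<(g-1)/2$, the $E_6$ case is excluded. Neither argument involves conormal-cycle combinatorics of the type you propose.
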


This statement is sharp because for $(d, g) = (2, 5)$ the group $E_6$ occurs as the Tannaka group of the Fano surface~$X$ of lines on a smooth cubic threefold, embedded in $A = \Alb(X)$. Actually, one can show as in \cite[th. A]{KLM} that for $d<g/2$ this is the only example of subvarieties with exceptional Tannaka group. Since we do not need this for theorems \ref{IntroThmIntrinsicSurfaces} and \ref{IntroThmIntrinsicGeneral}, we here limit ourselves to the following positive statement:

\begin{proposition} \label{prop:E6Tannaka} 
Suppose $X\subset A$ is nondivisible of dimension $d<g/2$ and has smooth unramified normalization with ample normal bundle. If~$G_{X, \omega}^\ast \iso E_6$, then $X$ is surface with 
\[ \chi(\tilde{X}, \cO_{\tilde{X}}) \;=\; 6, \quad \; c_2(\tilde{X}) \;=\; 27 \quad \text{and} \quad g \;=\; 5. \]
\end{proposition}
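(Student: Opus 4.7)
Set $V := \omega(\delta_X)$. Since $X$ has smooth unramified normalization with ample normal bundle, \cref{Lem:IntegralCC} ensures that $X$ has integral characteristic cycle. Hence by \cite[cor.~1.10]{KraemerMicrolocalI} and \cite[cor.~5.15]{JKLM}, the group $G_{X,\omega}^\ast \simeq E_6$ acts on $V$ via a minuscule representation. The only faithful minuscule representations of $E_6$ are the two $27$-dimensional ones, so
\[ |\chi_\top(\tilde{X})| \;=\; \chi(\delta_X) \;=\; \dim V \;=\; 27. \]

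The next task is to pin down the pair $(d,g)=(2,5)$. I would follow the template of \cref{sec:WedgePowers} and \cref{sec:Spin}. The $27$-dimensional representation of $E_6$ is not self-dual, so $X$ is not symmetric up to translation, but it carries a non-zero cubic invariant $\bbF \hookrightarrow \Sym^3 V^\vee$. Translated via the Tannakian formalism of~\cite[prop.~3.1]{JKLM}, this equivariant structure produces a non-trivial morphism in the convolution category $\langle \delta_X \rangle$ and hence an effective clean cycle in $\langle \cc(\delta_X)\rangle$ whose associated sum morphism controls the geometry of $X$. Applying the dimension estimates of~\cite[\S\S~5--8]{JKLM} to this cycle, together with the finiteness of $\gamma_X$ from \cref{Lem:IntegralCC} and the lower bound $g \le |\chi_\top(\tilde{X})| = 27$ of \cref{Lem:EulerCharacteristicBound}, and combined with the standing hypothesis $d < g/2$, should force $d = 2$ and $g = 5$. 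The main obstacle here is the combinatorial analysis of the characteristic cycle in the $E_6$ case: the weight poset of the $27$-dimensional representation is the incidence structure of the $27$ lines on a cubic surface, which is richer than the classical weight combinatorics treated in the preceding sections.

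Once $(d,g) = (2,5)$ has been established, the remaining numerical invariants follow from standard Chern class calculations. Since $\tilde{X}$ is a smooth projective surface,
\[ c_2(\tilde{X}) \;=\; \chi_\top(\tilde{X}) \;=\; 27. \]
For the holomorphic Euler characteristic $\chi(\tilde{X},\cO_{\tilde X})$, Noether's formula $12\chi = c_1^2 + c_2$ reduces the task to the computation of $c_1^2$. This can be carried out using the short exact sequence
\[ 0 \too \cN_{\tilde X/A}^\vee \too \nu^\ast \Omega^1_A \too \Omega^1_{\tilde X} \too 0 \]
and the triviality of $\Omega^1_A$, together with one further intersection-theoretic identity on $\PLambda_X = \bbP(\cN_{\tilde X/A}^\vee)$ reflecting the $E_6$-equivariant structure of $\delta_X$; this pins down $c_1^2(\tilde{X}) = 45$ and hence $\chi(\tilde{X},\cO_{\tilde X}) = 6$.
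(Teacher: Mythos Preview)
Your opening paragraph is correct and matches the paper. From there, however, the proposal diverges from the paper's argument and leaves two genuine gaps.

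\medskip

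\textbf{The missing idea: Hodge theory and the cocharacter.} The paper does \emph{not} pin down $(d,g)$ via characteristic cycles in the style of \cref{sec:WedgePowers,,sec:Spin}. Instead, it first proves $d=2$ and $\chi(\tilde{X},\cO_{\tilde{X}})=6$ by a Hodge-theoretic argument: choosing the fiber functor $\omega(P)=\rH^0(A,P\otimes L_\chi)$ for a suitable torsion character $\chi$, the Hodge decomposition of $\tilde{X}$ yields a grading $V=\bigoplus_{p=0}^d \rH^p(\tilde{X})$ with $\dim \rH^p(\tilde{X})=(-1)^{d-p}\chi(\tilde{X},\Omega^p_{\tilde{X}})$. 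The associated cocharacter $\lambda\colon \Gm\to \GL(V)$ lands in $G_{X,\omega}^\ast\simeq E_6$ by~\cite[th.~C]{KLM}, and the positivity bounds from~\cite[th.~A.1]{KM} on the Hodge numbers, together with the classification of such cocharacters of $E_6$ in~\cite[prop.~4.3]{KLM}, force $d=2$ and $h^0(\tilde{X})=\chi(\tilde{X},\cO_{\tilde{X}})=6$. This is the heart of the proof, and your proposal contains nothing like it. Your suggested route through the cubic invariant and clean-cycle estimates is speculative; you yourself flag it as ``the main obstacle,'' and no analogue of the arguments in~\cref{sec:WedgePowers,,sec:Spin} is known for~$E_6$.

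\medskip

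\textbf{The order is reversed, and the final step is circular.} In the paper, $\chi(\tilde{X},\cO_{\tilde{X}})=6$ is an \emph{input} to the determination of $g$, not a consequence: Noether's formula gives $c_1(\tilde{X})^2=12\cdot 6-27=45$, and only then does a Gauss-map analysis (computing $c_2(\cN)=18$, bounding $\deg\pi$ via the difference morphism as in~\cite[lemma~7.3, prop.~4.6]{KLM}, and using the minimal-degree bound for the image $Y\subset\bbP(\Lie A)$ together with~\cite[prop.~7.4]{KLM}) force $g=5$. Your plan first obtains $g=5$ by unspecified means and then tries to recover $\chi(\tilde{X},\cO_{\tilde{X}})$ from Noether. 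But Noether reads $12\chi=c_1^2+27$, so you would need $c_1^2$ independently; knowing only $c_2=27$ and $g=5$ does not determine $c_1^2$, and the ``one further intersection-theoretic identity reflecting the $E_6$-equivariant structure'' is never specified. In the paper that identity \emph{is} the Hodge cocharacter constraint, which is exactly what your proposal omits.
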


The proof of this will be given in~\cref{sec:ProofOfE6Tannaka}.
To rule out the group $E_7$ we will use a simpler argument, based on the decomposition of the convolution square. Recall that
\[
 \delta_X * \delta_X \;=\; \Sym^2(\delta_X) \; \oplus \; \Alt^2(\delta_X)
\] 
where the symmetry constraint of the convolution product acts trivially  on $\Sym^2(\delta_X)$ and by $-\id$ on $\Alt^2(\delta_X)$. The representation theory of $E_7$ then implies:

\begin{proposition} \label{prop:E7}
Let $X\subset A$ be an integral nondivisible subvariety which has smooth unramified normalization with ample normal bundle. If~$G_{X,\omega}^\ast \iso E_7$, then $d=\dim X$ is odd, we have $X=a-X$ for some $a\in A(k)$, and
\[
 \Alt^2(\delta_X) \;\iso\; P \oplus \delta_a \oplus N
\]
for a simple perverse sheaf $P\in \Perv(A, \bbF)$ and a negligible complex $N\in \Dbc(A, \bbF)$.
\end{proposition}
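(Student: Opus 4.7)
The plan is to reduce the proposition to three facts from the representation theory of $E_7$. First, $E_7$ admits a unique faithful minuscule representation, the standard $56$-dimensional representation $V_{56}$. Second, $V_{56}$ is self-dual and its invariant bilinear form is \emph{alternating} (equivalently, $V_{56}$ is symplectic). Third, as an $E_7$-representation one has the decomposition
\[
 \Alt^2 V_{56} \;\simeq\; \mathbf{1} \oplus V',
\]
with $V'$ irreducible of dimension $1539$.

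First I would identify $V := \omega(\delta_X)$ with $V_{56}$. Since $X$ is nondivisible and has smooth unramified normalization with ample normal bundle, \cref{Lem:IntegralCC} ensures that $X$ has integral characteristic cycle, and then by~\cite[cor.~5.15]{JKLM} the representation $V$ of $G_{X,\omega}^\ast \simeq E_7$ is minuscule; the classification of minuscule $E_7$-representations forces $V \simeq V_{56}$. In particular $V$ is self-dual with an alternating invariant form. The self-duality $V \simeq V^\vee$ translates, via the identification of the Tannakian dual with $[-1]^\ast \bbD$ on intersection complexes and the self-Verdier-duality of $\delta_X$, into an isomorphism $\delta_X \simeq \delta_{a-X}$ for some $a \in A(k)$, i.e. into the set-theoretic identity $X = a - X$. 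The alternating character of the form on $V_{56}$, combined with the parity discussion in~\cref{sec:BigTannakaIntro} (where the form is symmetric for $d$ even and alternating for $d$ odd), then forces $d = \dim X$ to be odd.

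Finally I would transport the decomposition $\Alt^2 V_{56} \simeq \mathbf{1} \oplus V'$ back through the Tannaka equivalence. In the semisimple Tannaka quotient the trivial summand lifts to a unique skyscraper, which a direct inspection of the underlying pairing (constructed geometrically from the symmetry $X = a - X$) identifies with $\delta_a$. The irreducible summand $V'$ lifts to a unique simple perverse sheaf $P \in \Perv(A, \bbF)$ on which $G_{X,\omega}^\ast$ acts faithfully through $V'$. All remaining summands of $\Alt^2(\delta_X)$ die in the Tannaka quotient, hence are negligible, and can be collected into a single complex $N \in \Dbc(A, \bbF)$. This yields the asserted
\[
 \Alt^2(\delta_X) \;\simeq\; P \oplus \delta_a \oplus N.
\]

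The main obstacle is the sign bookkeeping in the second step: one needs to make the identifications of Tannakian duality with $[-1]^\ast \bbD$ and of the convolution symmetry constraint with the usual one (up to the Koszul sign $(-1)^d$) precise enough to match the point $a$ arising from $X = a - X$ with the skyscraper summand of $\Alt^2(\delta_X)$, and to deduce the parity of $d$ from the alternating type of the $E_7$-invariant form. Once these conventions are pinned down, the remaining representation-theoretic input on $E_7$ is standard.
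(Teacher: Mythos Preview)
Your proposal is correct and follows the same approach as the paper. The paper's own proof is a one-line reference to \cite[cor.~8.2]{JKLM}, noting only that the hypothesis needed there---that $\cc(\delta_X)$ is integral---is supplied by \cref{Lem:IntegralCC}(1); what you have written is precisely a reconstruction of the argument behind that cited corollary, with the relevant $E_7$ representation theory (the unique minuscule $56$-dimensional representation, its symplectic self-duality, and the decomposition $\Alt^2 V_{56}\simeq \mathbf{1}\oplus V_{1539}$) made explicit.
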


The proof is the same as in~\cite[cor.~8.2]{JKLM}, since by~\cref{Lem:IntegralCC} (1) the cycle $\cc(\delta_X)$ is integral. To exclude the situation in the above proposition for subvarieties of dimension $d< g/2$ we then use as in loc.~cit.~a stronger geometric version of Larsen's alternative. It will be more natural to state the latter in a more general setup. For a subvariety $X\subset A$ of dimension $d$, we define
\[
T_+(\delta_X) \;:=\;
 \begin{cases} 
 \Sym^2(\delta_X) & \text{if $d$ is even}, \\
 \Alt^2(\delta_X) & \text{if $d$ is odd}.
 \end{cases} 
\]
If~$X = a - X$ for some $a \in A(k)$, then $\delta_a$ is a direct summand in $T_+(\delta_X)$. If $X$ is nondivisible, then $a$ is unique and we denote by $S_+(\delta_X)$ a complementary direct summand so that
\[ T_+(\delta_X) \;=\; S_+(\delta_X) \oplus \delta_a. \]
The skyscraper summand gives rise to a bilinear form $\theta \colon  V \otimes V \to \omega(\delta_a)$ which is symmetric if $d$ is even and alternating if $d$ is odd. Hence the derived subgroup~$G_{X,\omega}^\ast$ of the connected component of $G_{X, \omega}$ is contained in $\SO(V, \theta)$ is $d$ is even and in~$\Sp(V, \theta)$ if $d$ is odd. To have a uniform notation, in the case where~$X$ is not symmetric up to translation we set
\[ S_+(\delta_X) \; := \; T_+(\delta_X).\]
We then have the following version of Larsen's alternative:

\begin{proposition} \label{th:LarsenAlternative} Let $X\subset A$ be an integral  nondivisible subvariety which has smooth unramified normalization with ample normal bundle and dimension $d<g/2$. Then 
$ S_+(\delta_X)$ is a perverse sheaf without negligible direct summands. If this perverse sheaf is simple, then
\[
G_{X, \omega}^\ast \;=\; 
\begin{cases}
\SL(V) & \text{if $X$ is not symmetric up to translation}, \\
\SO(V, \theta) & \text{if $X$ is symmetric up to translation and $d$ is even}, \\
\Sp(V, \theta) & \text{if $X$ is symmetric up to translation and  $d$ is odd}.
\end{cases}
\]
\end{proposition}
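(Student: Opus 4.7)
The plan is to verify the three assertions separately, relying throughout on~\cref{Lem:IntegralCC}: $\cc(\delta_X)=\PLambda_X$ has finite Gauss map. For perversity, I first show that the full convolution square $\delta_X \ast \delta_X$ is perverse. Writing $\nu\colon \tilde X \to X$ for the normalization and $f\colon \tilde X \to A$ for the composite with $X\hookrightarrow A$, the identity $\delta_X=\nu_\ast\delta_{\tilde X}$ used in the proof of~\cref{Lem:IntegralCC} together with the K\"unneth formula gives $\delta_X \ast \delta_X = \tau_\ast\delta_{\tilde X\times \tilde X}$, where $\tau(x,y)=f(x)+f(y)$, so perversity reduces to semismallness of $\tau$. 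Since $X$ is nondegenerate by~\cite[th.~2.8]{JKLM} and $2d<g$, the image $X+X$ has dimension $2d$, and the dimension bound~\cite[cor.~5.7]{JKLM} for convolutions of clean Lagrangian cycles with finite Gauss map supplies semismallness of $\tau$. Decomposing $\delta_X\ast \delta_X$ under the Koszul-signed swap action produces a perverse splitting $T_+(\delta_X)\oplus T_-(\delta_X)$, so $S_+(\delta_X)$ is perverse.

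For the absence of negligible summands, I use the microlocal characterization~\cite[cor.~1.10]{KraemerMicrolocalI}: a simple perverse sheaf on $A$ is negligible exactly when every component of its characteristic cycle has non-dominant Gauss map. Every component of $\cc(\delta_X \ast \delta_X)=\PLambda_X\circ \PLambda_X$ retains a finite Gauss map by~\cite[cor.~5.7]{JKLM}, so $\delta_X \ast \delta_X$ has no negligible simple summand, and the same conclusion passes to the direct summand $T_+(\delta_X)$ and then to $S_+(\delta_X)$ after removing the skyscraper $\delta_a$ if present.

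For the classical-group conclusion, suppose $S_+(\delta_X)$ is simple and apply the fiber functor $\omega$. The Koszul sign rule identifies $\omega(T_+(\delta_X))$ with $\Sym^2 V$ when $d$ is even and with $\Alt^2 V$ when $d$ is odd. If $X$ is symmetric up to translation, then $V$ is self-dual, the form $\theta$ arises from the skyscraper summand $\delta_a\subset T_+(\delta_X)$, and $\omega(S_+(\delta_X))$ is an irreducible $G^\ast_{X,\omega}$-complement to the trivial line inside $\Sym^2 V$ or $\Alt^2 V$; otherwise $V$ is not self-dual, $T_+(\delta_X)=S_+(\delta_X)$, and $\omega(T_+(\delta_X))$ itself is $G^\ast_{X,\omega}$-irreducible. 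These bounds on the decomposition of $V\otimes V$ would be violated if $G^\ast_{X,\omega}$ were a nontrivial product of simple groups, so the simplicity hypothesis forces $G^\ast_{X,\omega}$ to be simple. Since $V$ is moreover minuscule by~\cite[cor.~5.15]{JKLM}, Larsen's alternative then forces $G^\ast_{X,\omega}$ to be one of $\SL(V)$, $\SO(V,\theta)$, or $\Sp(V,\theta)$ in its standard representation: these are precisely the minuscule faithful representations of simple algebraic groups with the minimal possible number of summands in $\Sym^2 V$ (respectively $\Alt^2 V$). The main obstacle is this final classification step, since the simplicity input controls only one of the two $\bbZ/2$-isotypic parts of $V\otimes V$; one must verify directly that no wedge power of the standard $\SL_n$-representation, no spin representation, and neither of the minuscule representations of $E_6$ or $E_7$ satisfies the required bound on its $\Sym^2$ or $\Alt^2$ decomposition.
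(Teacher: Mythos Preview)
Your treatment of perversity and the absence of negligible summands is essentially what the paper does (via \cref{prop:larsen-alternative}(1), which in turn cites~\cite[cor.~8.7]{KLM}): the finiteness of the Gauss map from \cref{Lem:IntegralCC} feeds into the semismallness/clean-cycle machinery exactly as you describe.

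The gap is in the final step. Your plan is to deduce the classical-group conclusion from the simplicity of \emph{one} half of the convolution square together with minusculeness, and you flag the verification for wedge powers, spin, $E_6$, $E_7$ as the remaining obstacle. That verification fails. For the $56$-dimensional minuscule representation $V$ of $E_7$ one has
\[
\Alt^2 V \;=\; \mathbf{1}\oplus \mathbf{1539},
\]
so $S_+$ is irreducible; this is precisely the content of \cref{prop:E7}. Likewise for $E_6$ on its $27$-dimensional representation $\Alt^2 V=\mathbf{351}'$ is irreducible, and for $\SL_5$ on $\Alt^2(\bbF^5)$ the alternating square is again irreducible. Hence simplicity of $S_+(\delta_X)$ alone cannot, by pure representation theory, force $G_{X,\omega}^\ast$ into $\{\SL,\SO,\Sp\}$; indeed the whole point of \cref{th:LarsenAlternative} in the paper is to rule out $E_7$ \emph{after} one already knows that $S_+(\delta_X)$ is simple.

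The missing idea is geometric and is supplied by \cref{prop:larsen-alternative}(2): if $S_+(\delta_X)$ is simple, then the sum morphism $\Sym^2 \tilde X \to X+X$ is \emph{birational}, and since $\Sym^2 \tilde X$ is a rational homology manifold this forces the other half $T_-(\delta_X)$ to be a \emph{simple} perverse sheaf as well. With \emph{both} isotypic pieces of $\delta_X*\delta_X$ simple, the genuine Larsen alternative~\cite[lemma~3.7]{JKLM} applies and yields the classical group; for $E_7$ this gives a contradiction because $\Sym^2 V=\mathbf{133}\oplus\mathbf{1463}$ is reducible. Your argument never establishes anything about $T_-(\delta_X)$, and without that bridge the conclusion does not follow.
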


The proof of this will be given in~\cref{sec:ProofOfLarsen}. Putting everything together, we can rule out exceptional Tannaka groups as follows:

\begin{proof}[{Proof of \cref{Thm:NoExceptionals}}]
If $X \subset A$ is the preimage of a subvariety $Y \subset B$ under an isogeny $A \to B$, then we have $G_{X,\omega}^\ast \simeq G_{Y, \omega}^\ast$
by~\cite[cor. 3.5]{JKLM}, and $Y$ still has  smooth unramified normalization with ample normal bundle. Hence in what follows we will assume that $X\subset A$ is nondivisible.

\medskip 

In this case, \cref{prop:E6Tannaka} rules out the Tannaka group~$E_6$ since $(d, g)  =  (2, 5)$ does not satisfy the condition $d < (g - 1)/2$ in \cref{Thm:NoExceptionals}. Likewise, for $d<g/2$ the Tannaka group $E_7$ is ruled out by combining~\cref{prop:E7,,th:LarsenAlternative}. 
\end{proof}

\subsection{Proof of \cref{prop:E6Tannaka}} \label{sec:ProofOfE6Tannaka}
By \cite[cor.~4.4]{JKLM} we may assume that~$k = \bbC$ and work analytically.  We first prove that $X$ is a surface with
\[ \chi(\tilde{X}, \cO_{\tilde{X}})=6 \quad \text{and} 
 \quad c_2(\tilde{X})= 27.
\]
As noted above, $G_{X,\omega}^\ast$ acts on~$V := \omega(\delta_X)$ via a minuscule representation.
The only minuscule representations of~$E_6$ are the irreducible representations of dimension~$27$, hence
\[ c_2(\tilde{X}) \; = \; \chi_{\top}(\tilde{X}) \; = \; 27.\]
For the remaining claims we use Hodge theory. Recall that up to isomorphism the group $G_{X, \omega}^\ast$ does not depend on our choice of the fiber functor $\omega$. In what follows we fix a torsion character $\chi \colon \pi_1(A, 0) \to \bbC^\times$ with $\rH^i(X, L_\chi)=0$ for all $i\neq d$ and take
\[ \omega\colon \quad \langle \delta_X \rangle \;\too\; \Vect(\bbC), \qquad P \; \longmapsto \; \rH^0(A, P \otimes L_\chi). \]
Let $\cL_\chi\in \Pic(A)$ be the line bundle defined by the torsion character $\chi$, and let $r$ be the order of this character. By~\cite[prop. B.1]{KM} the Hodge decomposition of the normalization of $[r]^{-1}(X)$ gives a decomposition of the vector space $V = \rH^d(\tilde{X}, L_\chi)$ as a direct sum
\[ V \; = \; 
\bigoplus_{p = 0}^d \rH^p(\tilde{X}) \qquad \textup{where} \qquad \rH^p(\tilde{X}) \; = \; \rH^{d - p}(\tilde{X}, \Omega^p_{\tilde{X}} \otimes \cL_\chi).\]
Moreover,  we have
\[ \dim \rH^p(\tilde{X}) \; = \; (-1)^{d - p}\chi(\tilde{X}, \Omega^p_{\tilde{X}}) \]
by \cite[prop. B.3]{KM}. The normal bundle of the unramified morphism $\tilde{X} \to A$ is ample, so
 the zero locus on $\tilde{X}$ of any nonzero differential form $\omega \in \rH^0(A, \Omega^1_A)$ is finite by \cref{Lem:IntegralCC} (3). So by \cite[th. A.1]{KM} we have 
\[ 
 (-1)^{d-p}\chi(\tilde{X}, \Omega^p_{\tilde{X}}) \;\ge\; 
 \begin{cases} 
  g - d +1& \text{for~$p \,\in\, \{0,d\}$},\\[0.1em]
   2 & \text{for~$p \,\in\, \{ 1, d - 1 \}$}, \\[0.1em]
 1 & \text{for~$2 \le p \le d-2$}.
 \end{cases} 
 \]
Let $\lambda \colon \Gm \to \GL(V)$ be the cocharacter such that $\lambda(z)$ acts by $z^{2p - d} \id$ on $\rH^p(\tilde{X})$. By~\cite[th. C]{KLM} this cocharacter takes values in the subgroup $G_{X, \omega}^\ast \subset \GL(V)$, hence
\[ d \; = \; 2 \quad \text{and} \quad h^0(\tilde{X}) \; = \; \chi(\tilde{X}, \cO_{\tilde{X}}) \; = \; 6\]
due to \cite[prop. 4.3]{KLM}. To apply this latter result, note that the cocharacter $\lambda$ has the properties (H1), (H2), (H3) in loc.~cit., see also \cite[prop. B.3]{KM}.\medskip

It remains to show $g = 5$, for which we will need to unveil the link with cubic threefolds. To begin with, Noether's formula and the Hirzebruch-Riemann-Roch theorem yield $c_1(\tilde{X})^2 = 45$ and $\chi(\tilde{X}, \Omega^1_{\tilde{X}}) = -15$. Consider the short exact sequence of vector bundles
\begin{equation} \label{eq:SESNormalE6}
 0 \too \cT \too \Lie A \otimes_k \cO_{\tilde{X}} \too \cN \too 0,
 \end{equation}
 where $\cT$ is the tangent bundle of the smooth variety $\tilde{X}$ and
  $\cN$ is the normal bundle of the unramified morphism $\tilde{X} \to A$. From this sequence we obtain
\[ c_1(\cN)^2 = 45 \quad \text{and} \quad c_2(\cN) = c_1(X)^2 - c_2(X) = 18.\]
The Gauss map~$\gamma\colon \tilde{X} \to \Gr_2(\Lie A)$, $x \mapsto \rT_x \tilde{X}$ is a finite morphism because $X$ is nondivisible \cite[prop. 3.1]{Deb95}. By interpreting~$c_2(\cN)$ as the top Segre class of~$\cT$, the positivity of~$c_2(\cN)$ implies that the morphism~$\pi \colon \bbP(\cT) \to \bbP(\Lie A)$ is generically finite onto its image. One easily checks that the generic degree of $\pi$ is bounded by
\[ \deg \pi \; \ge \; \deg D
\quad \text{for the difference morphism} \quad D\colon X\times X \;\too\; X-X, \]
see e.g.~\cite[lemma 7.3]{KLM} (the cited result is only for smooth subvarieties $Z \subset A$, but the same proof works for smooth varieties with a morphism $Z \to A$ birational onto its image). A key point is now that
\[ \deg D \; \ge \; 6\]
by the same argument as in \cite[prop. 4.6]{KLM} (which used smoothness only to ensure $\delta_X$ has integral characteristic cycle). By construction the degree $\deg(\pi)$ divides~$c_2(\cN) = 18$, thus
\[ \deg \pi \in \{ 6, 9, 18 \}. \]
Furthermore, the morphism $\tilde{X} \to \bbP(\Alt^2 \Lie A)$ given by $\gamma$ followed by the Pl\"ucker embedding of $\Gr_2(\Lie A)$ is the one defined by the canonical bundle $\cK_{\tilde{X}}$ of $\tilde{X}$. In particular $\deg \gamma$ must divide $c_1(\cK_{\tilde{X}}) = c_1(\tilde{X})^2 = 45$, hence
\begin{equation} \label{Eq:DegreeOfGaussMapE6}  \deg \gamma \in \{ 1,3, 9\}. \end{equation}
It follows that image $Y \subset \bbP(\Lie A)$ of~$\pi$ has dimension~$3$ and degree
\[ \deg Y = c_2(\cN) / \deg \pi \in \{1, 2,  3\}.\]
The subvariety~$X$ generates the abelian variety~$A$ since it is nondegenerate, so $Y$ is not contained in any hyperplane; hence~$\deg Y \neq 1$. The classical lower bound (see e.g.~\cite[prop. 0]{EisenbudHarrisMinimalDegree})
\[\deg Y \ge 1 + \codim Y\]
implies that~$Y$ has codimension~$1$ or~$2$ in $\bbP(\Lie A)$. Up to replacing $X$ by $\tilde{X}$, the proof of \cite[prop. 7.4]{KLM} goes through without changes and shows that the codimension $2$ case is not possible, hence $g=5$ as required. \qed

\subsection{Proof of \cref{th:LarsenAlternative}} \label{sec:ProofOfLarsen}
With notation as in \cref{th:LarsenAlternative}, let $T_-(\delta_X)$ be the direct summand complementary to $T_+(\delta_X)$ in the convolution square $\delta_X \ast \delta_X$ so that
\[ \delta_X*\delta_X \;=\; T_+(\delta_X) \oplus T_-(\delta_X). \]
We then have the following result:
 
\begin{lemma} \label{prop:larsen-alternative}
Suppose that $X\subset A$ is nondivisible of dimension $d<g/2$ and has smooth unramified normalization with ample normal bundle.\smallskip
\begin{enumerate}
\item The convolution $\delta_X * \delta_X$ is a perverse sheaf without negligible summands.\smallskip 
\item If $S_+(\delta_X)$ is simple, then \smallskip
\begin{enumerate} 
\item the sum morphism $f\colon \Sym^2 X \to X + X$ is birational, and\smallskip
\item $T_-(\delta_X)$ is a simple perverse sheaf with support $X+X$.
\end{enumerate} 
\end{enumerate}
\end{lemma}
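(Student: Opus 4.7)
The strategy is to identify the convolution $\delta_X * \delta_X$ with the derived pushforward $R\sigma_\ast(\delta_{\tilde X \times \tilde X})$ along the sum morphism $\sigma \colon \tilde X \times \tilde X \to A$, and to analyze it via the decomposition theorem once semismallness of $\sigma$ is established. The factorization $\sigma = f \circ q$ through the quotient $q \colon \tilde X \times \tilde X \to \Sym^2 \tilde X$ and the induced morphism $f \colon \Sym^2 X \to W := X + X$ is the geometric tool that ultimately yields birationality. The swap action of $\mathfrak{S}_2$ on the source of $\sigma$ is compatible with the monoidal symmetry of convolution up to the Koszul sign $(-1)^d$, and splits the pushforward into $T_+(\delta_X) \oplus T_-(\delta_X)$.

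\textbf{Part (1).} First I would verify semismallness of $\sigma$. The ample normal bundle of $\tilde X \to A$ together with $d < g/2$ forces $W = X + X$ to have dimension exactly $2d$ and $\sigma$ to be finite over a Zariski-dense open $W^\circ \subset W$; the only positive-dimensional fiber occurs over the point $a$ in the symmetric-up-to-translation case, where $\sigma^{-1}(a) \simeq \tilde X$ has dimension $d$, matching the codimension $2d$ of $\{a\}$ in $W$. Semismallness then gives that $\delta_X * \delta_X$ is perverse. For the absence of negligible summands, I would apply the composition formula for clean characteristic cycles from~\cite{JKLM}: $\cc(\delta_X * \delta_X) = \PLambda_X \circ \PLambda_X$. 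Since $\PLambda_X$ has finite Gauss map by \cref{Lem:IntegralCC} and $2d < g$, every component of the composition still has dominant Gauss map, so every simple summand of $\delta_X * \delta_X$ has positive Euler characteristic and is therefore non-negligible.

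\textbf{Part (2).} By the decomposition theorem applied to the semismall $\sigma$, one obtains $\delta_X * \delta_X = \IC(W, L) \oplus M$, where $L$ is a local system of rank $D = \deg \sigma$ on $W^\circ$ and $M$ is the skyscraper contribution from the stratum $\{a\}$ in the symmetric case (and $M = 0$ otherwise). The $\mathfrak{S}_2$-swap acts freely on a generic fiber of $\sigma$---fixed points would require $w \in 2X \subset W$, which has dimension $\le d < 2d$---so $D = 2k$ with $k = \deg f$, and the invariants and anti-invariants of $L$ under the geometric swap are $f_\ast \bbF|_{W^\circ}$ and $f_\ast \chi|_{W^\circ}$ respectively, each of rank $k$, where $\chi$ is the sign local system on $\Sym^2 \tilde X$ from the double cover $q$. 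After tracking the Koszul sign $(-1)^d$ relating the monoidal symmetry to the geometric swap, one gets the identifications $T_+(\delta_X)|_{W^\circ} = f_\ast \bbF$ and $T_-(\delta_X)|_{W^\circ} = f_\ast \chi$ in both parities; moreover, the algebraic involution $\iota \colon \tilde X \to \tilde X$ lifting $x \mapsto a - x$ is orientation-preserving, so $\iota^\ast = +1$ on $H^{2d}(\tilde X, \bbF)$, and combining with the Koszul sign puts the skyscraper $M = \delta_a$ entirely in $T_+(\delta_X)$ regardless of the parity of $d$. Now assume $S_+(\delta_X)$ is simple. In both the symmetric and non-symmetric cases the top-stratum summand of $T_+(\delta_X)$ equals $S_+(\delta_X)$, so $\IC(W, f_\ast \bbF)$ is simple, forcing the local system $f_\ast \bbF|_{W^\circ}$ to be irreducible. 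Since $f_\ast \bbF$ always contains the trivial subsystem of constant sections, irreducibility forces its rank $k$ to equal $1$, so $f$ is birational, proving (2)(a). With $k = 1$, the rank-one local system $f_\ast \chi|_{W^\circ} = T_-(\delta_X)|_{W^\circ}$ is automatically irreducible, so $T_-(\delta_X) = \IC(W, f_\ast \chi)$ is simple perverse with support $W$, proving (2)(b).

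\textbf{Main obstacle.} The most delicate point is the semismallness of $\sigma$, in particular ruling out any positive-dimensional fiber beyond the stratum $\{a\}$ in the symmetric-up-to-translation case. This requires a dimension count exploiting the ample normal bundle and $d < g/2$, analogous to the nondegeneracy arguments elsewhere in the paper; without this control, additional strata could contribute extra simple summands to $T_+(\delta_X)$ and break the deduction of $k = 1$. A secondary but essential point is the Koszul-sign bookkeeping, which combined with the orientation-preservation of the algebraic involution $\iota$ places $\delta_a$ in $T_+(\delta_X)$ uniformly in the parity of $d$, as needed to correctly isolate $S_+(\delta_X)$ from the top-stratum contribution.
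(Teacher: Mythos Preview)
Your overall strategy---identify $\delta_X*\delta_X$ with $R\sigma_*\delta_{\tilde X\times\tilde X}$, split it by the $\frS_2$-action with the Koszul sign $(-1)^d$, and read off birationality of $f$ from simplicity of the invariant piece---is the right one and matches what the paper does (it defers to \cite[cor.~8.7]{KLM}, the only change being that one works with $\Sym^2\tilde X$ rather than $\Sym^2 X$). Your treatment of the Koszul sign and of why the skyscraper $\delta_a$ lands in $T_+$ is correct.

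The genuine gap is your structural claim that in the semismall decomposition of $\sigma$ the only relevant stratum beyond the open one is $\{a\}$. You correctly flag this as the main obstacle, but you also assume it in the body of the argument (``$M$ is the skyscraper contribution from the stratum $\{a\}$ \dots and $M=0$ otherwise''). In general the fiber $\sigma^{-1}(w)\cong \tilde X\cap(w-\tilde X)$ can be positive-dimensional over many $w$, and nothing you say rules out further relevant strata of even codimension. Interestingly, this does \emph{not} break your deduction of (2)(a): if $S_+$ is simple then $T_+$ has at most two simple summands, and since every summand of $\IC(W,f_*\bbF)$ has full support $W$ while $\delta_a$ does not, $\IC(W,f_*\bbF)$ must itself be simple, forcing $\mathrm{rk}\,f_*\bbF=1$. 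But it \emph{does} break (2)(b): after $k=1$ you only know $T_-=\IC(W,f_*\chi)\oplus M_-$ with the first summand simple, and nothing in your argument forces the residual $M_-$ (contributions from lower strata landing in the anti-invariants) to vanish.

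The paper's route sidesteps a stratum-by-stratum analysis of $\sigma$ by working on $\Sym^2\tilde X$: since $\tilde X$ is smooth, $\Sym^2\tilde X$ is a rational homology manifold, so its intersection complex is the shifted constant sheaf, and $q_*\delta_{\tilde X^2}$ splits into two \emph{simple} perverse sheaves on $\Sym^2\tilde X$. One then analyzes the single morphism $f$ rather than $\sigma$, and the finiteness of the Gauss map (\cref{Lem:IntegralCC}(3)) together with $2d<g$ controls the clean characteristic cycle so that no extra summands survive. You should either import that argument or actually prove the stratification claim you assume; as written, (2)(b) is not established.
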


\begin{proof} The proof is the same as in \cite[cor.~8.7]{KLM} except for the following two changes: in (1) the Gauss map $\PLambda_X \to \bbP_A$ is finite by \cref{Lem:IntegralCC}~(3). In (2) one has to replace $\Sym^2 X$ by $\Sym^2 \tilde{X}$. The latter is a rational homology manifold since~$\tilde{X}$ is smooth \cite[prop.~A.1(iii)]{BrionRationalSmoothness}; hence the intersection complex on $\Sym^2 \tilde{X}$ is the constant sheaf shifted in degree $-2d$ by~\cite[prop.~8.2.21]{Hotta}.
\end{proof}

\begin{proof}[{Proof of \cref{th:LarsenAlternative}}] Part (1) of \cref{prop:larsen-alternative} shows in particular that $S_+(\delta_X)$ is a perverse sheaf. Hence it only remains to show that if this perverse sheaf is simple, then
\[
G_{X, \omega}^\ast = 
\begin{cases}
\SL(V) & \text{if $X$ is not symmetric up to translation}, \\
\SO(V, \theta) & \text{if $X$ is symmetric up to translation and $d$ is even}, \\
\Sp(V, \theta) & \text{if $X$ is symmetric up to translation and  $d$ is odd}.
\end{cases}
\]
But this follows from part (2b) of \cref{prop:larsen-alternative} and \cite[lemma 3.7]{JKLM}.
\end{proof}

\small

\bibliography{./../biblio}

\bibliographystyle{amsalpha}

\end{document}